\documentclass[mn,a4paper,fleqn%
]{w-art}
\usepackage{times,cite,w-thm}
\usepackage[PostScript]{diagrams}
\diagramstyle[labelstyle=\scriptstyle]
\def\CD{\diagram[amstex]}
\usepackage{amssymb}
\theoremstyle{plain}

\theoremstyle{definition}

\usepackage[]{graphicx}
\numberwithin{equation}{section}
\newcommand{\wA}{\widetilde A}

\newcommand{\grad}{\operatorname {grad}}
\newcommand{\divg}{\operatorname {div}}
\newcommand{\supp}{\operatorname{supp}}
\newcommand{\Ran}{\operatorname{ran}}
\newcommand{\pr}{\operatorname{pr}}
\newcommand{\ij}{\operatorname{i}}

\newcommand{\crp}{\overline{\mathbb R}_+}

\newcommand{\rnp}{{\mathbb R}^n_+}

\newcommand{\crnp}{\overline{\mathbb R}^n_+}

\newcommand{\comega}{\overline\Omega }
\newcommand{\ct}{\widetilde{c} }
\newcommand{\At}{\widetilde{A} }

\newcommand{\Op}{\operatorname{Op}}

\newcommand{\simto}{\overset\sim\rightarrow}

\newcommand{\rp}{ \mathbb R_+}

\newcommand{\Ami}{A_{\min}}
\newcommand{\Ama}{A_{\max}}
\def\C{\mathbb C}

\begin{document}
\DOIsuffix{theDOIsuffix}
\Volume{248}
\Month{01}
\Year{2007}
\pagespan{1}{}
\Receiveddate{XXXX}
\Reviseddate{XXXX}
\Accepteddate{XXXX}
\Dateposted{XXXX}
\keywords{Closed extension, $M$-function, abstract boundary
  spaces, boundary triplets, elliptic PDEs, pseudodifferential boundary operators,
  essential spectrum}
\subjclass[msc2000]{35J25, 35J30, 35J55, 35P05, 47A10, 47A11}%



\title[ $M$-functions for closed extensions  ]{$M$-functions for closed extensions of adjoint pairs of operators with applications to elliptic boundary problems}


\author[B. M. Brown]{B. M. Brown\inst{1,}%
 \footnote{\textsf{malcolm@cs.cf.ac.uk},
            Phone: +44\,(0)29\,2087\,4812,
            Fax: +44\, (0)29\, 2087\, 4598}}
\address[\inst{1}]{Cardiff School of Computer Science, 
Cardiff University, 
Queen's Buildings, 
5 The Parade, Roath,
Cardiff CF24 3AA, 
UK}
\author[G. Grubb]{G. Grubb\inst{2,}\footnote{Corresponding author\quad E-mail:~\textsf{grubb@math.ku.dk}, Phone +45\, 3532\,0743  Fax  +45\, 3532\,0704 .}}
\address[\inst{2}]{Department of Mathematical Sciences,
University of Copenhagen, 
Universitetsparken 5,
DK-2100 K\o{}benhavn,  
Denmark}
\author[I. G. Wood]{I. G. Wood\inst{3,} \footnote{\textsf{ian.wood@aber.ac.uk},
            Phone +44\, (0)1970-62\,2758
Fax +44 (0)1970-62\,2826}}
\address[\inst{3}]{Institute of Mathematics and Physics,
Aberystwyth University,
Ceredigion SY23 3BZ,
Wales UK}
    \dedicatory{Dedicated to the memory of Leonid R.\ Volevi\v c{}.}
\begin{abstract}

In this paper, we combine results on extensions of operators
with recent results 
on the relation between the $M$-function and the spectrum, 
to examine the spectral behaviour of  boundary value problems.
$M$-functions are defined for general closed extensions, and
associated with realisations of elliptic operators. In
particular, we consider both ODE and PDE examples   
where it is possible for the operator to possess spectral points that can not be detected  by the $M$-function. 
\end{abstract}
\maketitle                   






\section{Introduction}
 
The extension theory for unbounded operators in Hilbert spaces has been studied since at least 1929 when von Neumann discovered the so-called 
Kre\u{\i}n extension. There are many applications of a general extension theory  
to problems generated by both ODE and PDE examples.  In the case of symmetric ODEs the book of Na\u{\i}mark \cite{N68}   characterises the extensions of the minimal operator by means of   a Cayley transform 
between the deficiency spaces  and determines all of these extensions by  the imposition of explicit   boundary conditions.
For PDEs, adjoint pairs of second order elliptic operators, their extensions and boundary value problems
 were studied in the paper of Vishik \cite{V52} while Grubb \cite{G68} showed  that all closed extensions 
 of the minimal operator can
be characterised by nonlocal boundary conditions,
 building on work of Lions and
Magenes  \cite{LM63,LM68} (cf.\ also H\"ormander \cite{H63}) for elliptic operators.

The theory of boundary value spaces (also known as boundary triplets) associated with symmetric operators
has its origins in the work of Ko\v{c}ube\u{\i} \cite{Ko75} and Gorbachuk 
and Gorbachuk \cite{Gorbachuk} with developments  from many authors, 
(see   \cite{MikhailetsS,Kuzhel1,Kuzhel2,Brasche,Storozh,KK04,Ryz07,Pos07}).
In this context, the theory of the Weyl-$M$-function was developed by Derkach and Malamud \cite{DM87,Derkach}, where spectral properties of the operator were investigated via the $M$-function and Kre\u{\i}n-type resolvent formulae were established. 
 For adjoint pairs of abstract operators, boundary triplets were introduced by Vainerman \cite{Vai80} 
and Lyantze and Storozh \cite{LS83}. Many of the results proved for the symmetric case  have subsequently been extended to this situation: see, for instance, 
Malamud and Mogilevski \cite{MMM1} for adjoint pairs of operators,  
  and Malamud and Mogilevski
\cite{MMM2,MM02} for adjoint pairs of linear relations. Amrein and Pearson \cite{AP04} generalised several results from the classical Weyl-$m$-function for
 the one-dimensional Sturm-Liouville problem to the case of Schr\"odinger operators, calling them $M$-functions,
  in particular they were able to show nesting results for families of
  $M$-functions on spherical exterior domains in $\mathbb R^3$. 
For a recent contribution with
applications to PDEs and characterisation of eigenvalues as poles of an operator valued Weyl-$M$-function, we refer the reader to \cite{BMNW08}. Further recent developments in this area can be found in \cite{BGP06,BL07,GMZ07,Post08}.
There has also been extensive work on Dirichlet-to-Neumann maps, also 
sometimes known as Poincar\'{e}-Steklov operators, especially in the inverse problems 
literature. These operators have physical meaning, associating, for instance, a surface 
current to an applied voltage and
are, in some sense, the natural PDE realisation of the abstract $M$-function 
which appears in the theory of boundary triplets discussed above.

Systems of PDEs and even ODEs occur naturally in physical applications
(reaction-diffusion equations, Max\-well systems, Dirac systems, Lam\'e systems) 
and there is much interest in  the spectral properties of operators generated by these. In Grubb \cite{G74} and Geymonat and Grubb \cite{GG77}
such systems are extensively discussed and {\it inter alia } points of essential spectrum are characterised by failure of 
ellipticity of the operator or the boundary condition. An alternative abstract approach for block operator matrices has also been developed 
(see for example Atkinson et al.\ \cite{ALMS94}).

In this paper, we shall combine results obtained 
by  Grubb on extensions of operators, see for example \cite{G68}, 
with recent results obtained by Brown, Marletta, Naboko 
and Wood \cite{BMNW08} 
on the relation between the $M$-function and the spectrum, 
to examine the spectral behaviour of  boundary value problems.
$M$-functions are defined for general closed extensions, and
associated with realisations of elliptic operators. In
particular, we shall consider both ODE and PDE examples   
where it is possible for the operator to possess spectral points that can not be detected  by the $M$-function (unlike the classical Sturm-Liouville case).

In PDE cases, the kernel of the maximal realisation has infinite
dimension; then unbounded operators between boundary spaces must be
allowed, and it is important to choose the representations of the
boundary mappings in an efficient way. We here rely on the calculus of
pseudodifferential operators ($\psi $do's), as introduced through
works of 
Calderon, Zygmund, Mihlin, Kohn, Nirenberg, H\"o{}rmander, Seeley and
others around the 1960's, as well as the calculus of pseudodifferential
boundary operators ($\psi $dbo's) introduced by Boutet de Monvel
\cite{B66, B71} and applied and extended by Grubb \cite{G74}--\cite{G96} 
and others.

\vskip.1cm
\noindent
{\bf Plan of the paper.}
Section 2 contains a discussion of the abstract theory. We begin by 
recalling the
universal pa\-ra\-me\-tri\-za\-tion of closed extensions $\wA$ established 
in
\cite{G68}, based on an invertible reference operator $A_\beta $, and
show how it applies to operators $\wA-\lambda $ (by use of techniques from
\cite{G74}), giving rise to a Kre\u{\i}n resolvent formula and
characterisations of kernels and ranges, in terms of an abstract
boundary operator $T^\lambda :V_\lambda \to W_{\bar\lambda }$. Next,
we connect this with the boundary triplets theory, as presented in
\cite{BMNW08}. We first show that for the realisation $A_B$ defined by a
boundary condition $\Gamma _1u=B\Gamma _0u$ (for a special choice of
$\Gamma _0,\Gamma _1$), the holomorphic operator family $M_B(\lambda
)$ defined for $\lambda \in\varrho (A_B)$ is homeomorphic to the
inverse of the holomorphic family $T^\lambda $ defined for $\lambda
\in \varrho (A_\beta )$, when both exist. This takes care of a special
class of boundary conditions. The idea is now developed further to include general
extensions by considering mappings between subspaces, as
in \cite{G68}. In the present context, this replaces the need to work with relations.
$M$-functions are now defined for all closed extensions $\wA$, and the
operator families $M_{\wA}(\lambda )$ and $T^\lambda $ together
describe spectral properties of the operator.

In Section 3, the ideas are implemented for realisations of elliptic
operators on smooth domains $\Omega $ in $n$-space. For second-order strongly
elliptic operators it is shown in detail how $T^\lambda $ and
$M_{\wA}(\lambda )$, for Neumann-type boundary conditions $\gamma
_1u=C\gamma _0u$, are carried over to mappings $L^\lambda $ and
$M_L(\lambda )$ between Sobolev spaces over $\partial\Omega $. The
general closed realisations give rise to operator families $L_1^\lambda $ and
$M_{L_1}(\lambda )$ between closed subspaces of  $L_2(\partial\Omega )$. For
systems and higher-order operators, the normal elliptic boundary
conditions give rise to $M$- and $L$-functions between products of
Sobolev spaces over $\partial\Omega $.

Section 4 addresses the inverse question: Are the
spectral properties fully described by  $M_{\wA}(\lambda )$ and
$T^\lambda $? The answer is in the affirmative for $\lambda \in \varrho (\wA)\cup \varrho
(A_\beta )$, and this is sufficiently informative in many situations.
But it is not so in general: We show, both by a PDE and an 
ODE matrix example, that there exist
cases where the $M$-function is holomorphic across points in the
essential spectrum of $\wA$ (and of $A_\beta $).

The authors thank the referees for careful reading of this paper and useful suggestions for improvements.

\section{Universal parametrization and $M$-functions}\label{sec2}
\subsection{A universal parametrization of closed extensions}\label{sec:2.1}

It is assumed in this paper that there is given a pair of
closed, densely defined operators $\Ami$ and  $\Ami'$ in a Hilbert
space $H$ (a so-called adjoint pair) such that the adjoint of $\Ami'$ 
is an extension of $\Ami$
and the adjoint of $\Ami$ is an extension of $\Ami'$;
we call these adjoints $\Ama$ resp.\ $\Ama'$. 
Moreover we assume that there is given a closed,
densely defined operator $A_\beta $ lying between $\Ami$
and $\Ama$ and having a bounded everywhere defined inverse. Thus we get:
\begin{align}
\Ami&\subset A_\beta \subset \Ama,\quad 0\in\varrho (A_\beta ),\quad
\Ama=(\Ami')^*,\nonumber\\
\Ami'&\subset A_\beta ^*\subset \Ama',\quad 0\in\varrho (A_\beta
^*),\quad \Ama'=\Ami^*;
\label{2.1}
\end{align}
here $\varrho (B)$ denotes the resolvent set of $B$. We call $A_\beta
$ the reference operator. Let $\cal M$ and $\cal M'$ denote the sets
of operators $\wA$ lying between $\Ami$ and
$\Ama$, resp.\ $\wA'$ lying between $\Ami'$ and $\Ama'$. We write $Au$ for $\wA u$
when $\wA\in\cal M$, resp.\ $A'v$ for $\wA 'v$ when $\wA'\in\cal M'$.
When $U$ is a closed subspace of $H$, we denote by $f_U$ the
orthogonal projection of $f$ onto $U$; the projection map is denoted $\pr_U$.

Denote  also
\begin{equation}
\operatorname{ker}\Ama = Z,\quad \operatorname{ker}\Ama '= Z';\label{2.2}
\end{equation}
and let
\begin{align}
\pr_\beta &=A_\beta ^{-1}\Ama  : D(\Ama)\to D(A_\beta ),\quad
\pr_\zeta =I-\pr_\beta :D(\Ama)\to Z,\nonumber\\
\pr_{\beta'} &=(A_\beta^*) ^{-1}\Ama'  : D(\Ama')\to D(A_\beta ^*),
\quad\pr_{\zeta '}=I-\pr_{\beta '}:D(\Ama')\to Z'.
\label{2.3}
\end{align}
Then $\pr_\beta $ and $\pr_\zeta $, resp.\ $\pr_{\beta'} $ and
$\pr_{\zeta '}$,  
are complementary  projections
defining the direct sum decompositions
\begin{equation}
D(\Ama)=D(A_\beta )\dot+ Z,\text{ resp. }D(\Ama')=D(A_\beta ^*)\dot+ Z'.\label{2.4}
\end{equation}
We also write $\pr_\beta u=u_\beta $, $\pr_\zeta  u=u_\zeta  $, etc.

The above statements are verified in \cite{G68}, which also showed the abstract
Green's formula
\begin{equation}
(A u,v) - (u, A' v)=((Au)_{Z'}, v_{\zeta '})-(u_\zeta ,
(A'v)_Z), \text{ for }u\in D(\Ama), v\in D(\Ama');\label{2.5}
\end{equation}
and we recall that in that paper,
all the closed operators in $\cal M$ were characterised  by abstract
boundary conditions:

\begin{theorem}\label{Theorem2.1} There is a one-to-one correspondence between
all closed operators $\wA\in\cal M$ and all operators $T:V\to W$,
where $V$ and $W$ are closed subspaces of $Z$ resp.\ $Z'$,
and $T$ is closed with domain $D(T)$ dense in $V$. Here $T: V\to W$
is defined from $\wA$ by 
\begin{align}
D(T)=\pr_\zeta  D(\widetilde A),&\quad V=\overline{D(T)},
\quad W=\overline{\pr_{\zeta '} D(\widetilde A^*)},\nonumber\\
Tu_\zeta&=(Au)_W \text{ for }u\in D(\wA);\label{2.6}
\end{align}
and $\wA$ is defined from $T:V\to W$ by
\begin{equation}
 D(\widetilde A)=\{ u\in D(\Ama)\mid u_\zeta \in D(T),\,
(Au)_W=Tu_\zeta \};\label{2.7}
\end{equation}
it can also be described by
\begin{equation}
u\in D(\wA)\iff u=v+z+A_\beta ^{-1}(Tz+f), v\in D(\Ami), z\in D(T),
f\in Z'\ominus W.\label{2.8}
\end{equation}

All closed subspaces $V\subset Z$ and $W\subset Z'$ and all closed densely
defined operators $T:V\to W$ are reached in this correspondence.

When $\wA$ corresponds to $T:V\to W$,
\begin{align}
\operatorname{ker}\wA&=\operatorname{ker}T,\nonumber\\
\operatorname{ran}\wA&=\operatorname{ran}T+(H\ominus W),\label{2.9}
\end{align}
orthogonal sum. In particular, $\wA$ is Fredholm if and only if $T$ is so, with the
same kernel and cokernel.

If $\wA$, hence also $T$, is injective, the inverse satisfies
\begin{equation}
\wA^{-1}=A_\beta ^{-1}+T^{-1}\pr_W, \text{ defined on
}\operatorname{ran}\wA.\label{2.10}
\end{equation}

The adjoint $\wA^*$ corresponds to $T^*:W\to V$ in the analogous way. In
particular, in the case where  $\Ami=\Ami'$ and $A_\beta $ is
selfadjoint (called the symmetric case), $\wA$ is selfadjoint if and only if $V=W$ and $T$ is
selfadjoint.
\end{theorem}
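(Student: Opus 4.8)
The plan is to build the two maps $\wA\mapsto T$ and $T\mapsto\wA$ from the explicit formulas \eqref{2.6}--\eqref{2.7}, verify that they are mutually inverse, and then read off \eqref{2.8}--\eqref{2.10} and the adjoint statement from the orthogonal decompositions the construction produces. The standing tools are: the direct sum \eqref{2.4}, under which every $u\in D(\Ama)$ is $u=u_\beta+u_\zeta$ with $\Ama u=A_\beta u_\beta$ (since $\Ama u_\zeta=0$) and $\pr_\zeta u=u-A_\beta^{-1}\Ama u$; the Green's formula \eqref{2.5}; and closedness, which via $\wA=\wA^{**}$ gives the criterion that $u\in D(\wA)$ iff $u\in D(\Ama)$ and $(\Ama u,v)=(u,\Ama'v)$ for all $v\in D(\wA^*)$. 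A short preliminary step — inserting $v\in Z'$, then $v\in D(A_\beta^*)$, into this criterion for $\Ami=(\Ama')^*$ via \eqref{2.5} — identifies $\Ami$ as the restriction of $A_\beta$ to $A_\beta^{-1}(H\ominus Z')$; I will use this repeatedly. By the symmetry of \eqref{2.1} under interchange of primed and unprimed objects (with $A_\beta\leftrightarrow A_\beta^*$) it is enough to argue each point once.

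Given a closed $\wA\in\cal M$, I first check that \eqref{2.6} defines $T$ unambiguously: if $u,u'\in D(\wA)$ have $u_\zeta=u'_\zeta$, then $w=u-u'\in D(A_\beta)\cap D(\wA)$, so $\wA w=A_\beta w$ (both restrict $\Ama$) and $w_\zeta=0$, and \eqref{2.5} paired against any $v\in D(\wA^*)$ forces $((A_\beta w)_{Z'},v_{\zeta'})=0$, hence $(A_\beta w)_{Z'}\perp W$ and $(Au)_W=(Au')_W$. Then, using the criterion above together with the $\Ami$-description, I show $D(\wA)=\{u\in D(\Ama):u_\zeta\in D(T),\ (Au)_W=Tu_\zeta\}$; the nonobvious inclusion rests on the fact that $A_\beta^{-1}g\in D(\wA)$ exactly when $g\in H\ominus W$ (for $g\in Z'$ this is again \eqref{2.5}, for $g\perp Z'$ it is the $\Ami$-description). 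Splitting $\Ama u$ into its components in $H\ominus Z'$, $Z'\ominus W$ and $W$ then yields \eqref{2.8}, and closedness of $T$ follows: if $z_k\to z$, $Tz_k\to\eta$, then $u_k:=z_k+A_\beta^{-1}Tz_k\in D(\wA)$ converges to $u:=z+A_\beta^{-1}\eta$ with $\wA u_k\to\eta$, so $u\in D(\wA)$, whence $u_\zeta=z\in D(T)$ and $Tz=(Au)_W=\eta$. Conversely, given any closed densely defined $T:V\to W$ with $V\subset Z$, $W\subset Z'$ closed, I define $\wA$ by \eqref{2.7}; then $\Ami\subset\wA\subset\Ama$ (the $\Ami$-description again), and $\wA$ is closed because $\pr_\zeta$ and $u\mapsto(\Ama u)_W$ are continuous in the graph norm while $T$ is closed, so all admissible $(V,W,T)$ are reached. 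That the two assignments invert each other is now a formal check; the one point that also needs the adjoint computation below is that $\overline{\pr_{\zeta'}D(\wA^*)}$ equals the prescribed $W$ and not merely $\overline{\operatorname{ran}T}$.

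For the remaining assertions I work from \eqref{2.8}: writing $u=v+z+A_\beta^{-1}(Tz+f)$ gives $\wA u=A_\beta v+Tz+f$ with the three summands in the mutually orthogonal subspaces $H\ominus Z'$, $W$, $Z'\ominus W$; so $\wA u=0$ forces $v=0$, $Tz=0$, $f=0$, which is $\operatorname{ker}\wA=\operatorname{ker}T$, and letting $v,z,f$ vary gives $\operatorname{ran}\wA=(H\ominus Z')\oplus\operatorname{ran}T\oplus(Z'\ominus W)=\operatorname{ran}T\oplus(H\ominus W)$, whence the Fredholm equivalence with the same kernel and cokernel. When $\wA$ (hence $T$) is injective, for $g=Tz+h$ with $z\in D(T)$ and $h\in H\ominus W$ one checks that $A_\beta^{-1}g+T^{-1}\pr_Wg$ has the form \eqref{2.8} and is mapped by $\wA$ to $g$, which gives \eqref{2.10}. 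Finally, feeding the test elements $z+A_\beta^{-1}Tz$ ($z\in D(T)$) and $A_\beta^{-1}g$ ($g\in Z'\ominus W$) into the criterion for $D(\wA^*)$ via \eqref{2.5} shows $\pr_{\zeta'}D(\wA^*)=D(T^*)$ and $(A'v)_V=T^*v_{\zeta'}$, so $\wA^*$ corresponds to $T^*:W\to V$; and in the symmetric case ($Z=Z'$, $A_\beta^*=A_\beta$) the two constructions coincide, so $\wA=\wA^*$ amounts to $V=W$ together with $T=T^*$.

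The real work, and the step I expect to be the main obstacle, is the pair of identifications $D(\wA)=\{u:u_\zeta\in D(T),\ (Au)_W=Tu_\zeta\}$ and $\pr_{\zeta'}D(\wA^*)=D(T^*)$. These are exactly what upgrade the correspondence from injective to bijective, and they are where the closed subspaces $V$ and $W$ — as opposed to the closures of $\operatorname{ran}T$ and $\operatorname{ran}T^*$ — get pinned down, so the argument must use $\wA=\wA^{**}$, the Green's formula \eqref{2.5} and the $\Ami$-description simultaneously; everything else is bookkeeping with orthogonal projections and the boundedness of $A_\beta^{-1}$.
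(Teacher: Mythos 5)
The paper does not prove Theorem~\ref{Theorem2.1} at all: immediately before the statement it says ``The above statements are verified in \cite{G68} \dots and we recall that in that paper, all the closed operators in $\cal M$ were characterised by abstract boundary conditions,'' and then it simply records the theorem. So there is no in-paper proof to compare against; the reference is Grubb's 1968 paper.

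That said, your proposal is a sound route, and it is essentially the natural one (and, as far as the present paper lets one tell, the one used in \cite{G68}). Your organizing tools are the right ones: the decomposition \eqref{2.4}, the Green's formula \eqref{2.5}, the identification $D(\Ami)=A_\beta^{-1}(H\ominus Z')$ (which holds because $\Ami\subset A_\beta$ forces $A_\beta u=\Ami u$ on $D(\Ami)$, and $\operatorname{ran}\Ami$ is closed and equal to $(Z')^\perp$ since $0\in\varrho(A_\beta)$), and the closure criterion $\wA=\wA^{**}$. You correctly isolate the two delicate steps: (a) the inclusion $\{u\in D(\Ama):u_\zeta\in D(T),\ (\Ama u)_W=Tu_\zeta\}\subset D(\wA)$, which you reduce to the lemma that $A_\beta^{-1}g\in D(\wA)$ iff $g\in H\ominus W$ (correct, by splitting $g$ along $H=(H\ominus Z')\oplus Z'$ and using \eqref{2.5} against $D(\wA^*)$ on the $Z'$-part); and (b) the identity $\pr_{\zeta'}D(\wA^*)=D(T^*)$, which is what pins down $W$ intrinsically rather than as $\overline{\operatorname{ran}T}$. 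Your derivations of \eqref{2.8}, the closedness of $T$, the kernel/range formula \eqref{2.9}, the inverse formula \eqref{2.10}, and the adjoint correspondence all check out; in particular the computation $\wA^{-1}g=A_\beta^{-1}g+T^{-1}\pr_Wg$ drops out of \eqref{2.8} as you claim. Two small points worth making explicit in a write-up: when proving closedness of $T$ via $u_k=z_k+A_\beta^{-1}Tz_k$, you should note $\eta=\lim Tz_k\in W$ (closedness of $W$) so that $(\Ama u)_W=\eta$ rather than a proper projection of $\eta$; and the ``formal check'' that $T\mapsto\wA\mapsto T$ is the identity should also record $\overline{\pr_\zeta D(\wA)}=\overline{D(T)}=V$, which is immediate from \eqref{2.8} but is part of what needs to be said.

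In short: no gap, correct approach; but since the statement is cited rather than proved in this paper, the comparison is against what one infers the cited argument to be, not against a proof in the text.
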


\begin{rem}\label{Remark2.2} The characterisation is related to that of Vishik
\cite{V52},
 but differs
in an important way: Vishik was concerned with {\it normally solvable}
operators $\wA$ (those with closed range), 
and his operators
between  subspaces of $Z$ and $Z'$ map in the opposite direction of those
in \cite{G68}, covering only a subset of them. In contrast, the theory in 
\cite{G68} allowed the characterisation of {\it all} closed operators
in $\cal M$.
\end{rem}

There are also some results in \cite[Section II.3]{G68} on non-closed
extensions. 

Now consider the situation where a spectral parameter $\lambda \in
{\mathbb C}$ is subtracted from the operators in $\cal M$. When $\lambda
\in\varrho (A_\beta )$, we have a similar situation as above:
\begin{equation}
\Ami-\lambda \subset A_\beta -\lambda \subset \Ama-\lambda ,\quad 
\Ami'-\bar\lambda \subset A_\beta ^*-\bar\lambda \subset \Ama'-\bar\lambda ,
\label{2.11}
\end{equation}
and we use the notation $\cal M_\lambda $, $\cal M'_{\bar\lambda
}$, and
\begin{align}
\operatorname{ker}(\Ama -\lambda )= Z_\lambda ,&\quad
\operatorname{ker}(\Ama '-\bar\lambda )= Z'_{\bar\lambda } ;\nonumber\\
\pr^\lambda _\beta =(A_\beta -\lambda )^{-1}(A -\lambda ),&\quad
\pr^{\bar\lambda }_{\beta'} =(A_\beta^*-\bar\lambda ) ^{-1}(A'-\bar\lambda ) ,\nonumber\\
\pr^\lambda _\zeta =I-\pr^\lambda _\beta ,&\quad\pr^{\bar\lambda }_{\zeta '}=I-\pr^{\bar\lambda }_{\beta'} .\label{2.12}
\end{align}
Then we have an immediate corollary of Theorem \ref{Theorem2.1}:

\begin{cor}\label{Corollary2.3} Let $\lambda \in \varrho (A_\beta )$. There is a {\rm 1--1} correspondence between
the closed operators $\wA-\lambda $ in $\cal M_\lambda $ and the closed,
densely defined operators  $T^\lambda :V_\lambda \to W_{\bar\lambda }$,
where $V_\lambda $ and $W_{\bar\lambda }$ are closed subspaces of $Z_\lambda
$ resp.\ $Z'_{\bar\lambda }$; here
\begin{align}
 D(T^\lambda )&=\pr^\lambda _\zeta  D(\widetilde A),\quad V_\lambda =\overline{D(T^\lambda )},
\quad W_{\bar\lambda }=\overline{\pr^{\bar\lambda }_{\zeta '} D(\widetilde A^*)},\nonumber\\
T^\lambda u^\lambda _\zeta&=((A-\lambda )u)_{W_{\bar\lambda }} \text{ for }u\in D(\wA),\nonumber\\
 D(\widetilde A)&=\{ u\in D(\Ama)\mid u^\lambda _\zeta \in D(T^\lambda ),\,
((A-\lambda )u)_{W_{\bar\lambda }}=T^\lambda u^\lambda _\zeta \}.\label{2.13}
\end{align}
In this correspondence,
\begin{align}
\operatorname{ker}(\wA-\lambda )&=\operatorname{ker}T^\lambda ,\nonumber\\
\operatorname{ran}(\wA-\lambda )&=\operatorname{ran}T^\lambda +(H\ominus W_{\bar\lambda } ),\label{2.14}
\end{align}
orthogonal sum. In particular, if $\lambda \in \varrho (\wA)$,
\begin{equation}
(\wA-\lambda )^{-1}=(A_\beta -\lambda )^{-1}+\ij_{V_\lambda \to H}(T^\lambda )^{-1}\pr_{W_{\bar\lambda } }.\label{2.15}
\end{equation}

\end{cor}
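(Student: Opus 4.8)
The plan is to derive everything as a direct application of Theorem \ref{Theorem2.1} after substituting $\lambda$. Since $\lambda\in\varrho(A_\beta)$, the operator $A_\beta-\lambda$ is again closed, densely defined, lies between $\Ami-\lambda$ and $\Ama-\lambda$, and has bounded everywhere-defined inverse $(A_\beta-\lambda)^{-1}$; likewise $A_\beta^*-\bar\lambda$ sits between $\Ami'-\bar\lambda$ and $\Ama'-\bar\lambda$ with $0\in\varrho(A_\beta^*-\bar\lambda)$. The adjoint-pair relations are preserved under the shift: $(\Ami'-\bar\lambda)^*=\Ama-\lambda$ and $(\Ami-\lambda)^*=\Ama'-\bar\lambda$. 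Hence the triple $(\Ami-\lambda,\Ami'-\bar\lambda,A_\beta-\lambda)$ satisfies exactly the standing hypotheses \eqref{2.1}, with $\cal M$ replaced by $\cal M_\lambda$ and $\cal M'$ by $\cal M'_{\bar\lambda}$. First I would record this verification explicitly, noting that $\operatorname{ker}(\Ama-\lambda)=Z_\lambda$ and $\operatorname{ker}(\Ama'-\bar\lambda)=Z'_{\bar\lambda}$ play the roles of $Z$ and $Z'$, and that the projections defined in \eqref{2.12} are precisely the $\pr_\beta,\pr_\zeta,\pr_{\beta'},\pr_{\zeta'}$ of \eqref{2.3} built from the shifted operators.

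Next I would invoke Theorem \ref{Theorem2.1} verbatim for this shifted data. The closed operators in $\cal M_\lambda$ are in bijection with closed, densely defined $T^\lambda:V_\lambda\to W_{\bar\lambda}$ where $V_\lambda\subset Z_\lambda$, $W_{\bar\lambda}\subset Z'_{\bar\lambda}$ are closed subspaces; the defining formulas \eqref{2.6}--\eqref{2.7} become exactly the three displayed lines in \eqref{2.13} once one writes $D(\wA-\lambda)=D(\wA)$, $u^\lambda_\zeta=\pr^\lambda_\zeta u$, and observes that the operator value $(\wA-\lambda)u$ equals $(A-\lambda)u$. The kernel/range statement \eqref{2.9} of the theorem gives \eqref{2.14} immediately, since $\operatorname{ker}(\wA-\lambda)=\operatorname{ker}T^\lambda$ and $\operatorname{ran}(\wA-\lambda)=\operatorname{ran}T^\lambda\oplus(H\ominus W_{\bar\lambda})$.

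For the resolvent formula \eqref{2.15}, I would specialise \eqref{2.10}: when $\lambda\in\varrho(\wA)$, the operator $\wA-\lambda$ is bijective, hence $T^\lambda$ is injective with $\operatorname{ran}T^\lambda+(H\ominus W_{\bar\lambda})=H$, and \eqref{2.10} applied to $\wA-\lambda$ with reference operator $A_\beta-\lambda$ reads $(\wA-\lambda)^{-1}=(A_\beta-\lambda)^{-1}+(T^\lambda)^{-1}\pr_{W_{\bar\lambda}}$. The only cosmetic point is that $(T^\lambda)^{-1}$ maps into $V_\lambda\subset H$, so one inserts the inclusion $\ij_{V_\lambda\to H}$ to view it as an operator into $H$; this matches the statement.

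The one genuine thing to check — and the place where I expect to spend the only real effort — is the invariance of the adjoint-pair structure under the shift, i.e.\ that $(\Ami'-\bar\lambda)^*=\Ama-\lambda$ and $(\Ami-\lambda)^*=\Ama'-\bar\lambda$. This follows from the elementary fact that $(S-\lambda)^*=S^*-\bar\lambda$ for any densely defined $S$ and any $\lambda\in\C$, together with the given identities $\Ama=(\Ami')^*$ and $\Ama'=\Ami^*$; but it is worth stating so that the reader sees why Theorem \ref{Theorem2.1} applies without change. Everything else is a transcription of the theorem with $A$, $Z$, $Z'$, $\pr_\zeta$ replaced by their $\lambda$-shifted counterparts, so the corollary is indeed "immediate."
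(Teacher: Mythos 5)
Your proposal is correct and follows exactly the route the paper takes: the paper records in \eqref{2.11}--\eqref{2.12} that subtracting $\lambda\in\varrho(A_\beta)$ preserves the standing hypotheses \eqref{2.1}, and then states the corollary as an immediate application of Theorem \ref{Theorem2.1} to the shifted triple, which is precisely what you do. Your explicit check that $(\Ami-\lambda)^*=\Ama'-\bar\lambda$ and $(\Ami'-\bar\lambda)^*=\Ama-\lambda$ is a sensible detail the paper leaves to the reader, but it does not change the argument.
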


Here $\ij_{X\to Y}$ denotes the injection
$X\hookrightarrow Y$. 

The formula (\ref{2.15}) can be regarded as a universal ``Kre\u\i{}n resolvent
formula''. It relates the resolvents of the arbitrary operator $\wA$
and the reference operator $A_\beta $ in a straightforward way, such
that information on the spectrum of $\wA$ can be deduced from
information on $T^\lambda $. Note also the formulas in (\ref{2.14}), which
give not only a correspondence between
kernel dimensions and range codimensions, but an identification between
kernels and cokernels themselves.

The resolvent of $\wA$ was studied in \cite{G74}, from which we extract the
following additional information.  Define, for $\lambda \in \varrho (A_\beta  )$, the bounded operators on $H$:
\begin{align}
E^\lambda &=\Ama(A_\beta -\lambda )^{-1}=I+\lambda (A_\beta -\lambda )^{-1},\nonumber\\
F^\lambda &=(\Ama-\lambda )A_\beta ^{-1}=I-\lambda A_\beta ^{-1},\nonumber\\
E^{\prime\bar\lambda }&=\Ama'(A^*_\beta -\bar\lambda )^{-1}=I+\bar\lambda (A^*_\beta -\bar\lambda )^{-1}=(E^\lambda )^*,\nonumber\\
F^{\prime\bar\lambda }&=(\Ama'-\bar\lambda )(A^*_\beta )^{-1}=I-\bar\lambda (A^*_\beta )^{-1}=(F^\lambda )^*;\label{2.16}
\end{align}
then $E^\lambda $ and $F^\lambda $  are inverses of one another, and
so are $E^{\prime\bar\lambda }$ and $F^{\prime\bar\lambda }$. In
particular, the operators restrict to homeomorphisms
\begin{align}
E_Z^\lambda :Z&\simto Z_\lambda ,\quad F_Z^\lambda :Z_\lambda \simto Z,\nonumber\\
E^{\prime\bar\lambda} _{Z'}:Z'&\simto Z'_{\bar\lambda },\quad F_{Z'}^{\prime\bar\lambda }:Z'_{\bar\lambda }\simto Z'.\label{2.17}
\end{align}
Moreover, for $u\in D(\Ama)$, $v\in D(\Ama')$,
\begin{align}
\pr^\lambda _\zeta u&=E^\lambda \pr_\zeta u ,\quad \pr^\lambda _\beta u=\pr_\beta u-\lambda (A_\beta
-\lambda )^{-1}\pr_\zeta u,\nonumber\\
\pr^{\bar\lambda }_{\zeta'}v &=E^{\prime\bar\lambda }\pr_{\zeta '}v,\quad \pr^{\bar\lambda }_{\beta '}v=\pr_{\beta '}v-\bar\lambda (A^*_\beta
-\bar\lambda )^{-1}\pr_{\zeta'}v.\label{2.18}
\end{align}

This was shown in \cite[ Sect.\ 2]{G74}, in the symmetric case, and the (elementary)
proofs extend verbatim to the general case. Similar mappings occur
frequently in the literature on extensions. The following theorem
extends \cite[ Prop.\ 2.6]{G74}, to the non-symmetric situation, with
practically the same proof:

\begin{theorem}\label{Theorem2.4} For $\lambda \in\varrho (A_\beta )$, define  the operator $G^\lambda $ from $Z$ to $Z'$ by
\begin{equation}
G^\lambda z=-\lambda \pr_{Z'}E^\lambda z,\quad  z\in Z.\label{2.19}
\end{equation}
Then \begin{align}
D(T^\lambda )&=E^\lambda D(T),\quad  V_\lambda =E^\lambda V,\quad  W_{\bar\lambda }=E^{\prime \bar\lambda }W,\nonumber\\
(T^\lambda E^\lambda v,E^{\prime\bar\lambda} w)&=(Tv,w)+(G^\lambda v,w),  \text{ for }v\in
D(T),\;  w\in W.\label{2.20}
\end{align}
\end{theorem}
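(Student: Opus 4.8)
The plan is to obtain the three subspace identities directly from the transition formulas (\ref{2.18}) together with the homeomorphism properties (\ref{2.17}), and then to establish the pairing identity (\ref{2.20}) by evaluating both sides on a conveniently chosen element of $D(\wA)$.

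For the subspace part: by (\ref{2.18}) one has $\pr^\lambda_\zeta u=E^\lambda\pr_\zeta u$ for all $u\in D(\Ama)$, so specialising to $u\in D(\wA)$ gives $D(T^\lambda)=\pr^\lambda_\zeta D(\wA)=E^\lambda\pr_\zeta D(\wA)=E^\lambda D(T)$. Since $E^\lambda$ restricts to the homeomorphism $E^\lambda_Z\colon Z\simto Z_\lambda$, and $V=\overline{D(T)}$ with $D(T)\subset Z$, the image $E^\lambda_Z V$ is a closed subspace of $Z_\lambda$ (hence of $H$) equal to $\overline{E^\lambda D(T)}=\overline{D(T^\lambda)}=V_\lambda$. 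The identity $W_{\bar\lambda}=E^{\prime\bar\lambda}W$ follows in the same way from the second line of (\ref{2.18}) applied to elements of $D(\wA^*)$, using that $\wA^*$ corresponds to $T^*\colon W\to V$ with $D(T^*)=\pr_{\zeta'}D(\wA^*)$ and $W=\overline{D(T^*)}$ by Theorem \ref{Theorem2.1}, together with the homeomorphism $E^{\prime\bar\lambda}_{Z'}\colon Z'\simto Z'_{\bar\lambda}$.

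For the pairing identity, given $v\in D(T)$ I would take the test element $u=v+A_\beta^{-1}Tv$. Since $\Ama v=0$ and $\Ama A_\beta^{-1}=I$, this lies in $D(\Ama)$ with $\pr_\zeta u=v$, $\pr_\beta u=A_\beta^{-1}Tv$, and $Au=\Ama u=Tv\in W$, so $(Au)_W=Tv=T\pr_\zeta u$ and $u\in D(\wA)$ by (\ref{2.7}). Then $u^\lambda_\zeta=E^\lambda\pr_\zeta u=E^\lambda v$ by (\ref{2.18}), so (\ref{2.13}) gives $T^\lambda E^\lambda v=\pr_{W_{\bar\lambda}}\bigl((A-\lambda)u\bigr)$, while $(A-\lambda)u=Tv-\lambda v-\lambda A_\beta^{-1}Tv=F^\lambda Tv-\lambda v$ with $F^\lambda=I-\lambda A_\beta^{-1}$ by (\ref{2.16}). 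Pairing with $E^{\prime\bar\lambda}w$, which lies in $E^{\prime\bar\lambda}W=W_{\bar\lambda}$ by the first step so that $\pr_{W_{\bar\lambda}}$ may be dropped, one gets
\[
(T^\lambda E^\lambda v,\,E^{\prime\bar\lambda}w)=(F^\lambda Tv,\,E^{\prime\bar\lambda}w)-\lambda(v,\,E^{\prime\bar\lambda}w).
\]
Now $E^{\prime\bar\lambda}=(E^\lambda)^*$, $F^{\prime\bar\lambda}=(F^\lambda)^*$, and $E^{\prime\bar\lambda},F^{\prime\bar\lambda}$ are mutual inverses, so the first term equals $(Tv,F^{\prime\bar\lambda}E^{\prime\bar\lambda}w)=(Tv,w)$; and since $w\in W\subset Z'$, the second equals $(-\lambda\pr_{Z'}E^\lambda v,w)=(G^\lambda v,w)$ by the definition (\ref{2.19}). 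This is (\ref{2.20}).

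The computation itself is short once the right test element is chosen; the only delicate points are the bookkeeping of which orthogonal projections (onto $W$, onto $Z'$, or onto $W_{\bar\lambda}$) may be inserted or removed against which vectors, and the careful use of the inverse and adjoint relations among $E^\lambda,F^\lambda,E^{\prime\bar\lambda},F^{\prime\bar\lambda}$ — the point being that, in contrast to the symmetric setting of \cite[Prop.\ 2.6]{G74}, one must consistently keep $\bar\lambda$, $A_\beta^*$ and the primed objects distinct from their unprimed counterparts throughout. One should also verify in passing that the argument never uses $\lambda\in\varrho(\wA)$, only $\lambda\in\varrho(A_\beta)$, so that the formula holds on all of $\varrho(A_\beta)$ as stated.
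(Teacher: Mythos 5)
Your proof is correct and takes essentially the same approach as the paper: the paper starts from $(Tu_\zeta,w)=(Au,w)$ for a generic $u\in D(\wA)$ and unwinds it through $F^{\prime\bar\lambda}E^{\prime\bar\lambda}=I$, $(F^\lambda)^*=F^{\prime\bar\lambda}$, the splitting $u=u_\beta+u_\zeta$ and $Au_\zeta=0$, finally setting $u_\zeta=v$; you instead pick the explicit representative $u=v+A_\beta^{-1}Tv$ (the element of $D(\wA)$ supplied by (\ref{2.8}) with trivial $D(\Ami)$- and $Z'\ominus W$-parts) and run the identical algebra in the reverse direction, with $(A-\lambda)u=F^\lambda Tv-\lambda v$ matching the paper's $(A-\lambda)u_\beta=(A-\lambda)u-(A-\lambda)u_\zeta$ term for term. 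The subspace identities and the use of (\ref{2.17})--(\ref{2.18}) also coincide with the paper's one-line justification of the first part of (\ref{2.20}).
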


\begin{proof} The first line in (\ref{2.20}) follows from (\ref{2.13}) in view of (\ref{2.18}). The second line in calculated as follows:
For $u\in D(\wA)$, $w\in W$,
\begin{align}
(Tu_\zeta ,w)&=(Au,w)=(Au, F^{\prime\bar\lambda }E^{\prime\bar\lambda
}w)=(F^\lambda Au,E^{\prime\bar\lambda }w) \nonumber\\
&=((A-\lambda )(A_\beta  )^{-1} Au,E^{\prime\bar\lambda }w)= ((A-\lambda )u_\beta
,E^{\prime\bar\lambda }w) \nonumber\\
&=((A-\lambda )u ,E^{\prime\bar\lambda }w)- ((A-\lambda )u_\zeta  ,E^{\prime\bar\lambda
}w) \nonumber\\
&=(T^\lambda u^\lambda _\zeta  ,E^{\prime\bar\lambda }w)+ (\lambda u_\zeta
,E^{\prime\bar\lambda }w) =(T^\lambda E^\lambda u _\zeta  ,E^{\prime\bar\lambda }w)+ (\lambda E^\lambda u_\zeta  ,w). \nonumber
\end{align}
This shows the equation in (\ref{2.20}) when we set $u_\zeta =v$.
\end{proof}

Denote by $E^\lambda _V$ the restriction of $E^\lambda $ to a mapping from $V$ to
$V_\lambda $, with inverse $F^\lambda _V$, and let similarly $E^{\prime\bar\lambda} _W$ be the restriction of $E^{\prime\bar\lambda }$ to a
mapping from $W$  to
$W_{\bar\lambda }$, with inverse $F^{\prime\bar\lambda }_W$. 
Then the second line of (\ref{2.20}) can be written
\begin{equation}
(E_W^{\prime\bar\lambda })^*T^\lambda E_V^\lambda=T+G_{V,W}^\lambda  \text{ on }D(T)\subset V,\label{2.21}
\end{equation}
where \begin{equation}
G^\lambda _{V,W}=\pr_WG^\lambda \ij_{V\to Z}.\label{2.22}
\end{equation}
Equivalently,
\begin{equation}
T^\lambda =(F^{\prime\bar\lambda }_W)^*(T+G_{V,W}^\lambda )F^\lambda _V \text{ on }D(T^\lambda )\subset V_\lambda .\label{2.23}
\end{equation} Then the Kre\u\i{}n resolvent formula (\ref{2.15}) can be made more explicit
as follows:

\begin{cor}\label{Corollary2.5}
When $\lambda \in \varrho (\wA)\cap \varrho (A_\beta )$, $T^\lambda $ is invertible, and 
\begin{equation}
(T^\lambda )^{-1}=E^\lambda _V(T+G_{V,W}^\lambda
)^{-1}(E^{\prime\bar\lambda }_W)^*.\label{2.24}
\end{equation}
Hence
\begin{equation}
(\wA-\lambda )^{-1}=(A_\beta -\lambda )^{-1}+\ij_{V_\lambda \to H}E^\lambda _V(T+G_{V,W}^\lambda
)^{-1}(E^{\prime\bar\lambda }_W)^*
\pr_{W_{\bar\lambda }  }.\label{2.25}
\end{equation}
\end{cor}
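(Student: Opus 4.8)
The plan is to derive everything from the factorization \eqref{2.23} together with Corollary \ref{Corollary2.3}. First I would establish invertibility of $T^\lambda$: since $\lambda\in\varrho(\wA)$, the operator $\wA-\lambda$ maps $D(\wA)$ bijectively onto $H$ with bounded inverse, so by the kernel/range identities \eqref{2.14} we get $\operatorname{ker}T^\lambda=\{0\}$ and $\operatorname{ran}T^\lambda+(H\ominus W_{\bar\lambda})=H$, forcing $\operatorname{ran}T^\lambda=W_{\bar\lambda}$. Thus the closed, densely defined operator $T^\lambda\colon V_\lambda\to W_{\bar\lambda}$ is a bijection between Hilbert spaces, and the closed graph theorem yields that $(T^\lambda)^{-1}\colon W_{\bar\lambda}\to V_\lambda$ is bounded (this is also implicit in the validity of \eqref{2.15}).

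Next I would invoke \eqref{2.23}, which, together with the remark that $D(T^\lambda)=E^\lambda D(T)$ (from \eqref{2.13}, \eqref{2.18}), is a genuine operator identity
\[
T^\lambda=(F^{\prime\bar\lambda }_W)^*\,(T+G_{V,W}^\lambda )\,F^\lambda _V
\]
with matching domains. Here $F^\lambda _V\colon V_\lambda\simto V$ and $F^{\prime\bar\lambda }_W\colon W_{\bar\lambda }\simto W$ are homeomorphisms with inverses $E^\lambda _V$ and $E^{\prime\bar\lambda }_W$ respectively (by \eqref{2.17} and the definitions preceding \eqref{2.21}), and hence $(F^{\prime\bar\lambda }_W)^*\colon W\to W_{\bar\lambda }$ is a homeomorphism whose inverse is $(E^{\prime\bar\lambda }_W)^*$, since adjoints of mutually inverse operators are mutually inverse. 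Because the two outer factors are homeomorphisms and $T^\lambda$ has a bounded inverse, the middle factor $T+G_{V,W}^\lambda\colon V\to W$ — which is closed with domain $D(T)$, as $G_{V,W}^\lambda$ is bounded on all of $V$ — must be a bijection of $D(T)$ onto $W$ with bounded inverse. Inverting the product then gives
\[
(T^\lambda)^{-1}=(F^\lambda _V)^{-1}(T+G_{V,W}^\lambda )^{-1}\bigl((F^{\prime\bar\lambda }_W)^*\bigr)^{-1}=E^\lambda _V(T+G_{V,W}^\lambda )^{-1}(E^{\prime\bar\lambda }_W)^*,
\]
which is \eqref{2.24}. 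Substituting this into the Kre\u{\i}n resolvent formula \eqref{2.15} yields \eqref{2.25} immediately.

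I do not expect a serious obstacle here; the argument is essentially algebraic manipulation of \eqref{2.23}. The only point requiring care is bookkeeping with domains and the exact meaning of ``invertible'': one must keep track that $T+G_{V,W}^\lambda$ is an unbounded operator from $V$ to $W$ with domain $D(T)$, whereas its inverse is everywhere defined and bounded on $W$, and that the restrictions of $E^\lambda ,F^\lambda ,E^{\prime\bar\lambda },F^{\prime\bar\lambda }$ to the relevant closed subspaces are honest homeomorphisms fitting together as stated — which is precisely what \eqref{2.17} and the setup around \eqref{2.20}--\eqref{2.23} supply.
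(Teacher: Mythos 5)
Your proof is correct and follows essentially the same route as the paper, which simply states that \eqref{2.24} follows from \eqref{2.23} by inversion and is then inserted into \eqref{2.15}; you have filled in the domain bookkeeping and the homeomorphism/adjoint details that the paper leaves implicit.
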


\begin{proof} (\ref{2.24}) follows from (\ref{2.23}) by inversion, and insertion in
(\ref{2.15}) shows (\ref{2.25}). 
\end{proof}

Note that $G^\lambda _{V,W}$ depends in a simple way on $V$ and $W$ and is
independent of $T$.

\subsection{  Connections between the universal parametrization and
boundary triplets} \label{sec:2.2}

The setting for boundary triplets in the non-symmetric case is the
following, according to  \cite{BMNW08} (with reference to \cite{LS83}, 
\cite{MM02}): $\Ami$, $\Ama$, $\Ami'$
and $\Ama'$ are given as in the beginning of Section \ref{sec:2.1}, and 
there is given a pair of Hilbert spaces $\cal H$, $\cal K$ and two
pairs of
``boundary operators''
\begin{equation}
\begin{pmatrix}\Gamma_1\\ \quad\\\Gamma_0 \end{pmatrix}: D(\Ama)\to \begin{matrix}{\cal H}\\\times\\{\cal K}\end{matrix}, \quad
\begin{pmatrix}\Gamma'_1\\ \quad\\ \Gamma'_0 \end{pmatrix}: D(\Ama')\to
\begin{matrix}{\cal K}\\ \times\\{\cal H}\end{matrix},
\label{2.26}
\end{equation}
bounded with respect to the graph norm and  surjective, such that
\begin{equation}
(A u,v) - (u, A' v)=(\Gamma _1u, \Gamma '_0v)_{\cal H}-(\Gamma _0u ,
\Gamma '_1v)_{\cal K}, \text{ all }u\in D(\Ama), v\in D(\Ama'),\label{2.27}
\end{equation}
and 
$$
D(\Ami)=D(\Ama)\cap \operatorname{ker}\Gamma _1\cap
\operatorname{ker}\Gamma _0,\quad 
D(\Ami')=D(\Ama')\cap \operatorname{ker}\Gamma '_1\cap
\operatorname{ker}\Gamma '_0. 
$$

Note that under the assumption of (\ref{2.1}),
the choice
\begin{equation}
{\cal H}=Z', \;{\cal K}=Z,\; \Gamma _1=\pr_{Z'} \Ama,\;\Gamma _0=\pr_\zeta ,\;
\Gamma '_1=\pr_Z\Ama',\;\Gamma '_0=\pr_{\zeta '} ,\label{2.28}
\end{equation}
defines in view of (\ref{2.4}) and (\ref{2.5}) a boundary triplet.

Following \cite{BMNW08}, the boundary triplet is used to define 
operators $A_B\in \cal M$ and $A'_{B'}\in\cal M'$ for any pair of
operators $B\in \cal L(\cal K,\cal H)$, $B'\in \cal L(\cal H,\cal K)$
by
\begin{equation}
D(A_B)=\operatorname{ker}(\Gamma _1-B\Gamma _0),\quad
D(A'_{B'})=\operatorname{ker}(\Gamma '_1-B'\Gamma '_0). \label{2.29}
\end{equation}

In order to discuss resolvents,
\cite{BMNW08} assumes that $\varrho (A_B)\ne \emptyset$, which means that the
situation with existence of an invertible $A_\beta $ as in (\ref{2.1}) can
be 
obtained at
least after subtraction of a spectral parameter $\lambda _0$. Thus we can
 build on the full assumption (\ref{2.1}) from now on. (Theorem \ref{Theorem2.1} shows
that there is an abundance of different invertible operators in $\cal M$ then.)

\begin{defn}\label{Definition2.6} For $\lambda \in \varrho (A_B)$, the
$M$-function $M_B(\lambda )$ is defined by
$$
M_B(\lambda ): \operatorname{ran}(\Gamma _1-B\Gamma _0)\to {\cal
K},\quad M_B(\Gamma _1-B\Gamma _0)u=\Gamma _0u\text{ for all }u\in Z_\lambda ; 
$$
and for $\lambda \in \varrho (A'_{B'})$, the
$M$-function $M'_{B'}(\lambda )$ is defined similarly by
$$
M'_{B'}(\lambda ): \operatorname{ran}(\Gamma '_1-B'\Gamma '_0)\to {\cal
H},\quad M'_{B'}(\Gamma '_1-B'\Gamma '_0)v=\Gamma '_0v\text{ for all }v\in Z'_\lambda . 
$$
\end{defn}
 
It is shown in \cite{BMNW08} that when $\varrho (A_B)\ne \emptyset$,
\begin{equation}
(A_B)^*=A'_{B^*}.\label{2.30}
\end{equation}

We shall set all this in relation to the universal parametrization,
when the boundary triplet is chosen as in (\ref{2.28}). {\it We assume
}(\ref{2.28}) {\it from now on.}

Concerning $A_B$, note that $B$ is taken as a bounded operator from $Z$ to
$Z'$, and that
$$
D(A_B)=\{u\in D(\Ama)\mid (Au)_{Z'}=Bu_\zeta \},
$$
by definition. This shows that for the operator $T:V\to W$ that $A_B$ corresponds
to by Theorem \ref{Theorem2.1},
$$
V=Z,\quad W=Z',\quad T=B.
$$
Note that (\ref{2.30}) follows from Theorem \ref{Theorem2.1}.

When $Z$ and $Z'$ are finite dimensional, all operators $B$ will be
bounded. But in the case where $\dim Z=\dim Z'=\infty $, Theorem \ref{Theorem2.1}
shows that unbounded $T$'s must be allowed, to cover general
extensions. Therefore we in the following
take $B$ closed, densely defined and possibly unbounded, and
define $A_B$ by
\begin{equation}
D(A_B)=\{u\in D(\Ama)\mid  u_\zeta \in D(B), (Au)_{Z'}=Bu_\zeta \}.
\label{2.30a}
\end{equation}

\begin{lemma}\label{Lemma2.7} $\operatorname{ran}(\Gamma _1-B\Gamma _0)=Z'$. In
fact, any $f\in Z'$ can be
written as $f=(\Gamma _1-B\Gamma _0)v=\Gamma _1v$ for $v\in D(A_\beta
)$ taken equal to $A_\beta ^{-1}f$.
\end{lemma}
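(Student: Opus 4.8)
The plan is to unwind the definitions in \eqref{2.28} and \eqref{2.29} and exhibit the required preimage explicitly. With the boundary triplet chosen as in \eqref{2.28}, we have $\Gamma_1 = \pr_{Z'}\Ama$ and $\Gamma_0 = \pr_\zeta$, so for $v\in D(\Ama)$,
\begin{equation*}
(\Gamma_1 - B\Gamma_0)v = \pr_{Z'}\Ama v - B\,v_\zeta = (Av)_{Z'} - B v_\zeta .
\end{equation*}
Thus the claim is that every $f\in Z'$ is of the form $(Av)_{Z'}-Bv_\zeta$ for a suitable $v\in D(\Ama)$.

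First I would take $v = A_\beta^{-1}f$. Since $0\in\varrho(A_\beta)$ by \eqref{2.1}, this $v$ is well-defined and lies in $D(A_\beta)\subset D(\Ama)$. Next I would compute the two relevant projections of $v$. Because $v\in D(A_\beta)$, the decomposition \eqref{2.4}, $D(\Ama)=D(A_\beta)\dot+ Z$, gives $\pr_\beta v = v$ and hence $v_\zeta = \pr_\zeta v = 0$; so the term $B v_\zeta$ vanishes (and in particular there is no issue about whether $v_\zeta\in D(B)$). Moreover $\Ama v = A_\beta v = A_\beta A_\beta^{-1} f = f$, and since $f\in Z'$ we have $(Av)_{Z'} = \pr_{Z'} f = f$. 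Therefore $(\Gamma_1 - B\Gamma_0)v = f - 0 = f$, which also equals $\Gamma_1 v$ since $\Gamma_0 v = v_\zeta = 0$. This shows $Z'\subseteq \operatorname{ran}(\Gamma_1-B\Gamma_0)$, and the reverse inclusion is immediate because $\Gamma_1$ maps into $\cal H = Z'$ by \eqref{2.28}. Hence $\operatorname{ran}(\Gamma_1-B\Gamma_0)=Z'$.

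There is essentially no obstacle here; the only point requiring a word of care is that $B$ is now allowed to be unbounded and densely defined (per \eqref{2.30a}), so one should note that the chosen $v$ has $v_\zeta=0\in D(B)$, so $v$ genuinely belongs to $D(A_B)$'s ambient domain requirements and the expression $(\Gamma_1-B\Gamma_0)v$ is meaningful. All the identities used are just \eqref{2.1}, \eqref{2.4} and \eqref{2.28}, applied to the particularly simple element $A_\beta^{-1}f$.
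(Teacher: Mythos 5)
Your proof is correct and takes essentially the same approach as the paper's: both use the explicit choice $v=A_\beta^{-1}f$, the fact that $\pr_\zeta v=0$ for $v\in D(A_\beta)$, and that $\Ama v=f$ so $(Av)_{Z'}=f$. Your write-up is just slightly more explicit in computing the projections and in flagging the (trivial) domain issue $0\in D(B)$.
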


\begin{proof} Let $v$ run through $D(A_\beta
)$. Then $\Gamma _0v=\pr_\zeta v=0\in D(B)$, and $Av$ runs through
$H=(\operatorname{ran}\Ami)\oplus Z'$, so $\Gamma _1v-B\Gamma
_0v=(Av)_{Z'}$ runs through $Z'$.  In other words, we can take
$v=A_\beta ^{-1 }f$, for any $f\in Z'$.
\end{proof}

\begin{lemma}\label{Lemma2.8} For any $\lambda \in \varrho (A_B)$, $M_B(\lambda
)$ is well-defined as a mapping from $Z'$ to $Z$ by 
$$
 M_B(\lambda)(\Gamma _1-B\Gamma _0)u=\Gamma _0u\text{ for all }u\in Z_\lambda
\text{ with }\Gamma _0u\in D(B), 
$$
also when $D(B)$ is a subset of $Z$.
In fact,
\begin{equation}
M_B(\lambda )=\pr_\zeta (I-(A_B-\lambda )^{-1}(\Ama-\lambda ))A_\beta
^{-1}\ij_{Z'\to H}.\label{2.30c}
\end{equation}

\end{lemma}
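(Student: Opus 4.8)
The plan is to verify the asserted formula~\eqref{2.30c} directly and then deduce that it defines $M_B(\lambda)$ correctly on all of $Z'$. First I would observe that by Lemma~\ref{Lemma2.7}, every $f\in Z'$ is of the form $f=(\Gamma_1-B\Gamma_0)v$ with $v=A_\beta^{-1}f\in D(A_\beta)$; so to evaluate the would-be $M_B(\lambda)$ on $f$ it suffices to find $u\in Z_\lambda$ with $\Gamma_0 u\in D(B)$ and $(\Gamma_1-B\Gamma_0)u=f$, and then read off $\Gamma_0u$. The natural candidate is $u = v-(A_B-\lambda)^{-1}(\Ama-\lambda)v$, i.e.\ subtract from $v$ the unique element of $D(A_B)$ solving $(A_B-\lambda)w=(\Ama-\lambda)v$; this $u$ lies in $D(\Ama)$ and satisfies $(\Ama-\lambda)u=0$, hence $u\in Z_\lambda$. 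Applying $\pr_\zeta=\Gamma_0$ to this $u$ gives exactly the operator on the right-hand side of~\eqref{2.30c} applied to $f$, using $v=A_\beta^{-1}\ij_{Z'\to H}f$.

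Next I would check consistency with Definition~\ref{Definition2.6}, i.e.\ that this $u$ really is the element whose boundary data produce $f$. Since $w=(A_B-\lambda)^{-1}(\Ama-\lambda)v\in D(A_B)$, we have $(\Gamma_1-B\Gamma_0)w=0$ by~\eqref{2.30a} (in the closed, possibly unbounded $B$ version); hence $(\Gamma_1-B\Gamma_0)u=(\Gamma_1-B\Gamma_0)v=f$, provided $\Gamma_0 u=\Gamma_0 v-\Gamma_0 w$ lies in $D(B)$. Here $\Gamma_0 v=\pr_\zeta A_\beta^{-1}f=0\in D(B)$, and $\Gamma_0 w=\pr_\zeta w\in D(B)$ because $w\in D(A_B)$; so $\Gamma_0 u\in D(B)$, and the linearity of $\Gamma_1-B\Gamma_0$ on this affine combination is legitimate. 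Thus $u$ witnesses that $M_B(\lambda)f$ is well-defined and equals $\Gamma_0u=\pr_\zeta u$, which is~\eqref{2.30c}. I would also note that $u$ is \emph{the} element of $Z_\lambda$ with the given boundary data: if $u'\in Z_\lambda$ also has $(\Gamma_1-B\Gamma_0)u'=f$ with $\Gamma_0u'\in D(B)$, then $u-u'\in Z_\lambda\cap D(A_B-\lambda)$ with $(A_B-\lambda)(u-u')=0$, forcing $u=u'$ since $\lambda\in\varrho(A_B)$; this shows $M_B(\lambda)$ is single-valued, so the displayed defining relation genuinely determines a map $Z'\to Z$.

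I expect the main obstacle to be bookkeeping about domains: one must make sure that $(\Ama-\lambda)v$ lands in $H=\Ran(\Ami)\oplus Z'$ inside the domain of $(A_B-\lambda)^{-1}$ (automatic, as $\varrho(A_B)\ne\emptyset$ gives a bounded everywhere-defined resolvent), and that $w=(A_B-\lambda)^{-1}(\Ama-\lambda)v$ indeed lies in $D(A_B)\subset D(\Ama)$ so that $\Gamma_0w=\pr_\zeta w$ makes sense and lies in $D(B)$. A minor further point is that $\Gamma_1-B\Gamma_0$ is now only defined on $\{u\in D(\Ama): u_\zeta\in D(B)\}$, so the manipulation $(\Gamma_1-B\Gamma_0)(v-w)=(\Gamma_1-B\Gamma_0)v-(\Gamma_1-B\Gamma_0)w$ has to be justified on that set rather than on all of $D(\Ama)$; but since both $v$ and $w$ (and hence $u$) lie in it, and $B$ is linear on $D(B)$, this is fine. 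Once these domain checks are in place, the identity~\eqref{2.30c} follows by the computation sketched above, essentially by unwinding the definitions.
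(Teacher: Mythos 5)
Your argument is correct and follows the paper's own proof essentially step for step: using Lemma~\ref{Lemma2.7} to write $f=(\Gamma_1-B\Gamma_0)v$ with $v=A_\beta^{-1}f$, constructing $z^\lambda=v-(A_B-\lambda)^{-1}(\Ama-\lambda)v\in Z_\lambda$, checking $\Gamma_0z^\lambda\in D(B)$ and $(\Gamma_1-B\Gamma_0)z^\lambda=f$, and proving uniqueness from $Z_\lambda\cap D(A_B)=\{0\}$ when $\lambda\in\varrho(A_B)$, then reading off \eqref{2.30c}. The domain bookkeeping you emphasise (that $v$ and $w$ both lie in the set where $\Gamma_1-B\Gamma_0$ is defined) is exactly the care the paper takes when $B$ is unbounded; nothing is missing.
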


\begin{proof} In the defining equation, we can now only allow  those
$u=z^\lambda \in Z_\lambda $ for which $\pr_\zeta z^\lambda \in D(B)$. 

We first show: When $f=(\Gamma _1-B\Gamma _0)v$ for 
  $v=A_\beta^{-1}f$ as in Lemma \ref{Lemma2.7}, then there is a $z^\lambda
\in Z_\lambda $ with $\Gamma _0z^\lambda \in D( B)$ such that  
\begin{equation}
f=(\Gamma _1-B\Gamma _0)z^\lambda.\label{2.30b}
\end{equation}
For, let 
$x=(A_B-\lambda )^{-1}(A-\lambda )v$, then $v-x\in Z_\lambda
$. Moreover, since $x\in D(A_B)$, $\pr_\zeta x\in D(B)$ in view of
(\ref{2.30a}), and $\pr_\zeta v=0\in D(B)$, as already noted. Hence 
$$
(\Gamma _1-B\Gamma _0)(v-x)=(\Gamma _1-B\Gamma _0)v=f;
$$
so we can take $z^\lambda =v-x$. We conclude:
$$
\{(\Gamma _1-B\Gamma _0)z^\lambda \mid z^\lambda \in
Z_\lambda ,\; \pr_\zeta z^\lambda \in D(B) \}=Z'.
$$

Next, we note that the $z^\lambda $ in (\ref{2.30b}) is uniquely determined
from $f$. For, if $f=0$ and $z^\lambda $ solves (\ref{2.30b}), then
$z^\lambda \in
D(A_B)=D(A_B-\lambda )$, which is linearly independent from $Z_\lambda
$ since $\lambda \in \varrho (A_B)$, so $z^\lambda =0$.

Thus we can for any $f\in Z'$ set $M_B(\lambda )f= \pr_\zeta z^\lambda $
where $z^\lambda $ is the unique solution of (\ref{2.30b}); this defines
a linear mapping $M_B(\lambda )$ from
$Z'$ to $Z$. The procedures used above to construct $M_B(\lambda )$ are summed up
in (\ref{2.30c}).

\end{proof}

Since $B$ is closed, $M_B(\lambda )$ is closed, hence continuous, as a
mapping from $Z'$ to $Z$.

For a further analysis of $M_B(\lambda )$, assume
$\lambda \in \varrho (A_\beta )$. Then the maps $E^\lambda $,
$F^\lambda $ etc.\ in (\ref{2.16}) are defined. 
Let $z^\lambda \in Z_\lambda $, and consider the defining equation
\begin{equation}
M_B(\lambda )((Az^\lambda )_{Z'}-B\pr_\zeta z^\lambda )=\pr_\zeta z^\lambda ;\label{2.31}
\end{equation}
where
$\pr_\zeta z^\lambda $ is required to lie in $D(B) $. 
 By (\ref{2.17}) and (\ref{2.18}), there is a unique $z\in Z$ such that 
$z^\lambda =E^\lambda _{Z}z$; in fact
$$
z^\lambda =E^\lambda _{Z}z=\pr^\lambda _\zeta z; \quad z=F_Z^\lambda z^\lambda =\pr_\zeta z^\lambda ,
$$so the requirement is that $z^\lambda \in E^\lambda _Z D(B)$.

Writing (\ref{2.31}) in terms of $z$, and using that $Az^\lambda =\lambda
z^\lambda $, we find:
\begin{align}
z&=\pr_\zeta z^\lambda =M_B(\lambda )((A z^\lambda )_{Z'}-Bz )= 
M_B(\lambda )((\lambda  z^\lambda )_{Z'}-Bz )\nonumber\\
&= 
M_B(\lambda )((\lambda  E^\lambda z )_{Z'}-Bz )= M_B(\lambda )(-G^\lambda  -B)z,
\label{ 2.32a}
\end{align}
cf.\ (\ref{2.19}). Here we observe that the operator to the right of
$M_B(\lambda )$ equals $T^\lambda $ from Section \ref{sec:2.1} up to
homeomorphisms:
\begin{equation}
-G^\lambda -B=-(G^\lambda +T)=-(E_{Z'}^{\prime\bar\lambda })^*
T^\lambda E_Z^\lambda ,\label{2.32}
\end{equation}
by (\ref{2.21}); here $T^\lambda $ is invertible from $E^\lambda _ZD(B)$ onto
$Z'_\lambda $. We conclude that $M_B(\lambda )$ is the inverse of the
operator in (\ref{2.32}). We
have shown:

\begin{theorem}\label{Theorem2.9} When the boundary triplet is chosen as in
{\rm (\ref{2.28})} and $\lambda \in \varrho (A_B)\cap \varrho (A_\beta )$, $-M_B(\lambda )$ 
equals the inverse of $B+G^\lambda =T+G^\lambda $,
also equal to the inverse of $T^\lambda $
modulo homeomorphisms:
\begin{equation}
-M_B(\lambda )^{-1}=B+G^\lambda =T+G^\lambda =(E_{Z'}^{\prime\bar\lambda })^*T^\lambda E_Z^\lambda.\label{2.33}
\end{equation}
In particular, $M_B(\lambda )$ has range $D(B)$.
\end{theorem}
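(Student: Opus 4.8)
The plan is to read the three identities in (\ref{2.33}) off the computations made just before the statement. Two of them are essentially formal. The middle equality $B+G^\lambda=T+G^\lambda$ merely records that the operator $T:V\to W$ associated with $A_B$ by Theorem~\ref{Theorem2.1} has $V=Z$, $W=Z'$ and $T=B$; for these choices $G^\lambda_{V,W}=\pr_{Z'}G^\lambda\ij_{Z\to Z}=G^\lambda$, so the relation $(E^{\prime\bar\lambda}_{Z'})^*T^\lambda E^\lambda_Z=T+G^\lambda_{V,W}$ from (\ref{2.21}) is exactly the rightmost equality in (\ref{2.33}). The genuine content is therefore that $-M_B(\lambda)$ is the two-sided inverse of $B+G^\lambda$ and that $\operatorname{ran}M_B(\lambda)=D(B)$.

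For this I would first extract from the displayed computation ending in $z=M_B(\lambda)(-G^\lambda-B)z$ that, for each $z\in D(B)$, setting $z^\lambda=E^\lambda_Z z\in Z_\lambda$ --- so that $\pr_\zeta z^\lambda=z\in D(B)$, which is precisely the admissibility condition in the defining relation (\ref{2.31}) --- gives $M_B(\lambda)\bigl(-(B+G^\lambda)z\bigr)=z$; thus $-M_B(\lambda)$ is a left inverse of $B+G^\lambda$ on $D(B)$. Next, the factorisation $B+G^\lambda=(E^{\prime\bar\lambda}_{Z'})^*T^\lambda E^\lambda_Z$ displays $B+G^\lambda$ as a composite of the homeomorphism $E^\lambda_Z:Z\simto Z_\lambda$ (which carries $D(B)$ onto $D(T^\lambda)=E^\lambda_Z D(B)$ by Theorem~\ref{Theorem2.4}), the map $T^\lambda$, and the homeomorphism $(E^{\prime\bar\lambda}_{Z'})^*:Z'_{\bar\lambda}\simto Z'$. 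Since $\lambda\in\varrho(A_B)$, Corollary~\ref{Corollary2.3} --- i.e.\ (\ref{2.14}) with $\operatorname{ker}(A_B-\lambda)=\{0\}$ and $\operatorname{ran}(A_B-\lambda)=H$ --- shows $T^\lambda:D(T^\lambda)\to Z'_{\bar\lambda}$ is bijective, hence $B+G^\lambda:D(B)\to Z'$ is bijective; and a bijection possessing a left inverse has that left inverse as its genuine inverse. Therefore $-M_B(\lambda)=(B+G^\lambda)^{-1}$, defined on all of $Z'$ with range exactly $D(B)$, which is (\ref{2.33}) together with the final assertion. (Alternatively, the identity $(B+G^\lambda)(-M_B(\lambda))=I$ on $Z'$ can be checked directly by running the construction of $M_B(\lambda)$ in Lemma~\ref{Lemma2.8} in reverse, using $(Az^\lambda)_{Z'}=(\lambda E^\lambda z)_{Z'}=-G^\lambda z$ from (\ref{2.19}).)

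The only step requiring genuine care --- and the one I would flag as the main, if mild, obstacle --- is the domain bookkeeping: one must verify that the vectors $z^\lambda$ admitted in (\ref{2.31}) are exactly those of $E^\lambda_Z D(B)$, that this set coincides with $D(T^\lambda)$ (via Theorem~\ref{Theorem2.4} and (\ref{2.17})--(\ref{2.18})), and that the adjoints and inverses in the chain of homeomorphisms are taken between the correct subspaces, so that the rightmost identity in (\ref{2.33}) is legitimate on the nose. Once that is settled, everything else is formal manipulation with the resolvent identities (\ref{2.16})--(\ref{2.18}); the continuity of $M_B(\lambda)$ as a map $Z'\to Z$ has already been recorded.
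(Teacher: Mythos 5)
Your argument is correct and follows the paper's own route: the left-inverse identity is read off the displayed computation preceding the theorem, the factorisation is exactly (\ref{2.21}) specialised to $V=Z$, $W=Z'$, and the bijectivity of $T^\lambda$ (hence of $B+G^\lambda$) comes from Corollary~\ref{Corollary2.3} applied to $A_B$ with $\lambda\in\varrho(A_B)$, after which ``a left inverse of a bijection is its inverse'' closes the argument. The paper states this last step tersely (``we conclude that $M_B(\lambda)$ is the inverse''), so your explicit bijection argument and domain bookkeeping are a clarification of, not a departure from, the published proof.
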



With this insight we have access to the straightforward resolvent
formula 
(\ref{2.25}), which implies in this case:

\begin{cor}\label{Corollary2.10} For $\lambda \in \varrho (A_B)\cap \varrho (A_\beta )$,
\begin{equation}
(A_B-\lambda )^{-1}=(A_\beta -\lambda )^{-1}-\ij_{Z_\lambda \to H}E^\lambda _Z
M_B(\lambda ) (E^{\prime \bar\lambda }_{Z'})^*
\pr_{Z'_{\bar\lambda } }.\label{2.34}
\end{equation}
\end{cor}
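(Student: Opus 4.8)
The plan is to derive Corollary~\ref{Corollary2.10} directly from Corollary~\ref{Corollary2.5} together with Theorem~\ref{Theorem2.9}, exploiting the fact that for the boundary triplet chosen in (\ref{2.28}) one has $V=Z$, $W=Z'$, and $T=B$, so that $V_\lambda =Z_\lambda $ and $W_{\bar\lambda }=Z'_{\bar\lambda }$ by the first line of (\ref{2.20}). First I would record that, since $\lambda \in \varrho (A_B)\cap \varrho (A_\beta )$, the operator $T^\lambda $ is invertible by Corollary~\ref{Corollary2.3} (indeed $A_B$ plays the role of $\wA$ here), so formula (\ref{2.15}) applies and reads
\begin{equation*}
(A_B-\lambda )^{-1}=(A_\beta -\lambda )^{-1}+\ij_{Z_\lambda \to H}(T^\lambda )^{-1}\pr_{Z'_{\bar\lambda }}.
\end{equation*}

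The second step is to rewrite $(T^\lambda )^{-1}$ in terms of $M_B(\lambda )$. From (\ref{2.33}) we have $(E_{Z'}^{\prime\bar\lambda })^*T^\lambda E_Z^\lambda =-M_B(\lambda )^{-1}$, and since $E_Z^\lambda $ and $E_{Z'}^{\prime\bar\lambda }$ are homeomorphisms (from $Z$ onto $Z_\lambda $, resp.\ $Z'$ onto $Z'_{\bar\lambda }$) by (\ref{2.17}), inverting this identity gives
\begin{equation*}
(T^\lambda )^{-1}=E_Z^\lambda \bigl(-M_B(\lambda )^{-1}\bigr)^{-1}(E_{Z'}^{\prime\bar\lambda })^*=-E_Z^\lambda M_B(\lambda )(E_{Z'}^{\prime\bar\lambda })^*,
\end{equation*}
where one uses that $M_B(\lambda )$ is everywhere defined and continuous from $Z'$ to $Z$ (noted after Lemma~\ref{Lemma2.8}) and has range $D(B)=\operatorname{ran}M_B(\lambda )^{-1}$, matching the domain of the middle factor. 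Substituting this into the displayed resolvent formula and keeping track of the injections $\ij_{Z_\lambda \to H}$ and the projection $\pr_{Z'_{\bar\lambda }}$ yields exactly (\ref{2.34}); alternatively one can obtain the same conclusion by specializing (\ref{2.25}) to $V=Z$, $W=Z'$, using that then $G_{V,W}^\lambda =G^\lambda $ and $(T+G^\lambda )^{-1}=-M_B(\lambda )$ by (\ref{2.33}).

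There is no serious obstacle here: the content of the corollary is already packaged in Corollary~\ref{Corollary2.5} and Theorem~\ref{Theorem2.9}, and what remains is essentially bookkeeping. The one point requiring a little care is the handling of domains and ranges of the possibly unbounded operator $B$ (hence $T^\lambda $): one must check that the composition $E_Z^\lambda M_B(\lambda )(E_{Z'}^{\prime\bar\lambda })^*\pr_{Z'_{\bar\lambda }}$ is well-defined on all of $H$ and lands in $D(A_B-\lambda )$, which follows because $M_B(\lambda )$ is bounded on all of $Z'$ with range $D(B)$, the homeomorphism $E_Z^\lambda $ carries $D(B)$ onto $D(T^\lambda )$ by (\ref{2.20}), and the resulting element is precisely the correction term in the already-established formula (\ref{2.15}). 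So the proof is just: invoke (\ref{2.15}), rewrite $(T^\lambda )^{-1}$ via (\ref{2.33}), and simplify.

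\begin{proof}
By Theorem~\ref{Theorem2.9}, for $\lambda \in \varrho (A_B)\cap \varrho (A_\beta )$ we have $(E_{Z'}^{\prime\bar\lambda })^*T^\lambda E_Z^\lambda =-M_B(\lambda )^{-1}$, with $E_Z^\lambda :Z\simto Z_\lambda $ and $E_{Z'}^{\prime\bar\lambda }:Z'\simto Z'_{\bar\lambda }$ homeomorphisms by (\ref{2.17}). Hence $T^\lambda $ is invertible with
\begin{equation*}
(T^\lambda )^{-1}=-E_Z^\lambda M_B(\lambda )(E_{Z'}^{\prime\bar\lambda })^*.
\end{equation*}
Since $A_B\in\cal M$ corresponds to $T=B:Z\to Z'$ by Theorem~\ref{Theorem2.1}, Corollary~\ref{Corollary2.3} applies with $\wA=A_B$, $V_\lambda =Z_\lambda $, $W_{\bar\lambda }=Z'_{\bar\lambda }$, giving by (\ref{2.15})
\begin{equation*}
(A_B-\lambda )^{-1}=(A_\beta -\lambda )^{-1}+\ij_{Z_\lambda \to H}(T^\lambda )^{-1}\pr_{Z'_{\bar\lambda }}.
\end{equation*}
Inserting the expression for $(T^\lambda )^{-1}$ yields (\ref{2.34}).
\end{proof}
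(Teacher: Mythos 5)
Your proof is correct and follows essentially the same route as the paper. The paper obtains Corollary~\ref{Corollary2.10} by specializing the Kre\u{\i}n formula (\ref{2.25}) to $V=Z$, $W=Z'$ and inserting $(T+G^\lambda)^{-1}=-M_B(\lambda)$ from Theorem~\ref{Theorem2.9}; your primary derivation (starting from (\ref{2.15}) and inverting (\ref{2.33}) to get $(T^\lambda)^{-1}=-E^\lambda_Z M_B(\lambda)(E^{\prime\bar\lambda}_{Z'})^*$) is the same computation, since (\ref{2.25}) itself comes from (\ref{2.15}) via (\ref{2.24}), which is (\ref{2.33}) in its general form — and you note the (\ref{2.25})-based alternative explicitly.
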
 

\begin{remark}
Other Kre\u{\i}n-type resolvent formulae for realisations in the general framework of relations can be found in \cite[Section 5.2]{MM02}.
\end{remark}

We also have the
direct link between null-spaces and ranges (\ref{2.14}), when merely $\lambda
\in \varrho (A_\beta )$.

\begin{cor}\label{Corollary2.11} For any $\lambda \in \varrho (A_\beta )$,
\begin{align}
\operatorname{ker}(A_B-\lambda )&=E^\lambda _Z
\operatorname{ker}(B+G^\lambda ), \nonumber\\
\operatorname{ran}(A_B-\lambda )&=(F^{\prime\bar\lambda }_{Z'})^*\operatorname{ran}(B+G^\lambda )+\operatorname{ran}(\Ami -\lambda )
.\label{2.35}
\end{align}

\end{cor}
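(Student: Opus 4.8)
The plan is to derive Corollary~\ref{Corollary2.11} directly from Corollary~\ref{Corollary2.3} (specifically from~(\ref{2.14})) by substituting the factorization~(\ref{2.23}) of $T^\lambda $ established via Theorem~\ref{Theorem2.4}. Recall that for $A_B$ with the triplet chosen as in~(\ref{2.28}) we have $V=Z$, $W=Z'$, hence $V_\lambda =Z_\lambda $ and $W_{\bar\lambda }=Z'_{\bar\lambda }$, and $T=B$, so~(\ref{2.14}) reads $\operatorname{ker}(A_B-\lambda )=\operatorname{ker}T^\lambda $ and $\operatorname{ran}(A_B-\lambda )=\operatorname{ran}T^\lambda +(H\ominus Z'_{\bar\lambda })=\operatorname{ran}T^\lambda +\operatorname{ran}(\Ami-\lambda )$, using that $H\ominus Z'_{\bar\lambda }=\operatorname{ran}(\Ama-\bar\lambda )^{*}$... more simply that $H=\operatorname{ran}(\Ami-\lambda )\oplus Z'_{\bar\lambda }$ from the adjoint relation in~(\ref{2.11}).

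First I would treat the kernel. From~(\ref{2.23}), $T^\lambda =(F^{\prime\bar\lambda }_{Z'})^*(B+G^\lambda_{Z,Z'})F^\lambda _Z$ on $D(T^\lambda )\subset Z_\lambda $, and since $G^\lambda _{Z,Z'}=G^\lambda $ here (no projections needed as $V=Z$, $W=Z'$), and both $(F^{\prime\bar\lambda }_{Z'})^*$ and $F^\lambda _Z$ are homeomorphisms (inverses of $(E^{\prime\bar\lambda }_{Z'})^*$ and $E^\lambda _Z$ by~(\ref{2.17}) and the adjoint relations in~(\ref{2.16})), the kernel of $T^\lambda $ is the image under $E^\lambda _Z$ of $\operatorname{ker}(B+G^\lambda )$: indeed $T^\lambda u^\lambda =0$ iff $(B+G^\lambda )F^\lambda _Z u^\lambda =0$ iff $F^\lambda _Z u^\lambda \in\operatorname{ker}(B+G^\lambda )$ iff $u^\lambda \in E^\lambda _Z\operatorname{ker}(B+G^\lambda )$. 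Combined with $\operatorname{ker}(A_B-\lambda )=\operatorname{ker}T^\lambda $ this gives the first line of~(\ref{2.35}).

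Next the range. Since $(F^{\prime\bar\lambda }_{Z'})^*$ is a homeomorphism of $Z'$ onto $Z'_{\bar\lambda }$ and $F^\lambda _Z$ is a bijection of $Z_\lambda $ onto $Z$ carrying $D(T^\lambda )$ onto $D(B)$ (this is exactly the first line of~(\ref{2.20})), we get $\operatorname{ran}T^\lambda =(F^{\prime\bar\lambda }_{Z'})^*\operatorname{ran}(B+G^\lambda )$. Substituting into $\operatorname{ran}(A_B-\lambda )=\operatorname{ran}T^\lambda +\operatorname{ran}(\Ami-\lambda )$ from~(\ref{2.14}) yields the second line of~(\ref{2.35}). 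The only point needing a word of care is the identification $H\ominus W_{\bar\lambda }=H\ominus Z'_{\bar\lambda }=\operatorname{ran}(\Ami-\lambda )$, which follows because $\Ama-\bar\lambda =(\Ami'-\lambda )^*$ wait—rather because $(\Ami-\lambda )^*=\Ama'-\bar\lambda $ so $\overline{\operatorname{ran}(\Ami-\lambda )}=H\ominus\operatorname{ker}(\Ama'-\bar\lambda )$; but one should note the range of $\Ami-\lambda $ here need not be closed, so strictly~(\ref{2.14}) should be read with the orthogonal-sum decomposition $\operatorname{ran}T^\lambda +(H\ominus Z'_{\bar\lambda })$ and the identification $H\ominus Z'_{\bar\lambda }\supset\operatorname{ran}(\Ami-\lambda )$ with equality only in the closure. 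I expect the main obstacle to be precisely this bookkeeping about closedness of $\operatorname{ran}(\Ami-\lambda )$ versus $\operatorname{ran}(\Ama-\lambda )$ and making sure the ``$+$'' signs in~(\ref{2.35}) are interpreted consistently with~(\ref{2.14}); everything else is a routine transport of~(\ref{2.14}) through the homeomorphisms $E^\lambda _Z$ and $(F^{\prime\bar\lambda }_{Z'})^*$. The proof is therefore short: state the substitutions, invoke~(\ref{2.14}), ~(\ref{2.20}), ~(\ref{2.23}), and conclude.
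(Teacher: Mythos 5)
Your argument is correct and is exactly how the paper intends the corollary to be read: it is the specialization of \eqref{2.14} and the factorization \eqref{2.23} to the case $V=Z$, $W=Z'$, $T=B$. The closedness worry you raise at the end is not an obstacle: since $\lambda\in\varrho(A_\beta)$ and $\Ami-\lambda$ is a closed restriction of the boundedly invertible operator $A_\beta-\lambda$, the inverse of $\Ami-\lambda$ is bounded on its range, so $\operatorname{ran}(\Ami-\lambda)$ is closed, and $H\ominus Z'_{\bar\lambda}=H\ominus\operatorname{ker}((\Ami-\lambda)^*)=\overline{\operatorname{ran}(\Ami-\lambda)}=\operatorname{ran}(\Ami-\lambda)$ exactly.
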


For $\lambda \in \varrho (A_\beta )$, this adds valuable information to the
results from \cite{BMNW08} on the connection between eigenvalues of
$A_B$ and poles of $M_B(\lambda )$.

The analysis moreover implies that $M_B(\lambda
)$ and $M'_{B^*}(\bar\lambda )$ are adjoints, at least when $\lambda
\in \varrho (A_\beta )$.

Observe that $T+G^\lambda $ (and $T^\lambda $) is well-defined for all $\lambda \in \varrho
(A_\beta )$, whereas  $M_B(\lambda )$ is well-defined for all $\lambda
\in \varrho (A_B)$; the latter fact is useful for other 
purposes. In this
way, the two operator families complement each other, and,
together, contain much spectral
information. 

It is noteworthy that the widely studied boundary
triplets theory leads to an operator family whose elements are inverses
of elements of the operator family generated by Theorem \ref{Theorem2.1} -- compare with
Remark \ref{Remark2.2} on the connection with Vishik's theory.

\subsection{  The $M$-function for arbitrary closed extensions} \label{sec:2.3}

The above considerations do not fully use the potential of
Corollary \ref{Corollary2.3}, Theorem \ref{Theorem2.4} and Corollary \ref{Corollary2.5}, which allow much more
general boundary operators $T:V\to W $. But, inspired by the result
in Theorem \ref{Theorem2.9}, we can in fact establish useful $M$-functions in all
these other cases, namely homeomorphic to the inverses of the
operators
$T^\lambda $ that exist for $\lambda \in \varrho (\wA)\cap \varrho
(A_\beta )$, and extended to exist for all $\lambda \in \varrho (\wA)$.

\begin{theorem}\label{Theorem 2.12} Let $\wA$ be an arbitrary closed densely
defined operator between $\Ami$ and $\Ama$, and let $T:V\to W$ be the
corresponding operator according to Theorem
{\rm \ref{Theorem2.1}}. For any $\lambda \in \varrho (\wA)$ there is a bounded operator
$M_{\wA}(\lambda ):W\to V$, depending holomorphically on
$\lambda \in \varrho (\wA)$, such that when
$\lambda \in \varrho (A_\beta )$,  $-M_{\wA}(\lambda )$ is the
inverse of $T+G^\lambda _{V,W}$, and is homeomorphic to $T^\lambda $
(as defined in
Section {\rm \ref{sec:2.1})}.
It satisfies
\begin{equation}
M_{\wA}(\lambda )((Az^\lambda )_W-T\pr_\zeta z^\lambda )=\pr_\zeta z^\lambda ,
\label{2.36}
\end{equation}
for all $z^\lambda \in Z_\lambda $ such that $\pr_\zeta z^\lambda \in
D(T)$. Its definition extends to all $\lambda \in \varrho (\wA)$ by
the formula
\begin{equation}
M_{\wA}(\lambda )=\pr_\zeta \bigl(I-(\wA-\lambda )^{-1}
(\Ama-\lambda )\bigr)A_\beta ^{-1}\ij_{W\to H} \label{2.41}
\end{equation}

In particular, the resolvent formula
\begin{equation}
(\wA-\lambda )^{-1}=(A_\beta -\lambda )^{-1}-\ij_{V_\lambda \to H}E^\lambda _V
M_{\wA}(\lambda ) (E^{\prime \bar\lambda }_{W})^*
\pr_{W_{\bar\lambda } }\label{2.37}
\end{equation}
holds when $\lambda \in \varrho (\wA)\cap \varrho (A_\beta )$. For all
$\lambda \in \varrho (A_\beta )$,
\begin{align}
\operatorname{ker}(\wA-\lambda )&=E^\lambda _V
\operatorname{ker}(T+G^\lambda _{V,W}),\nonumber\\
\operatorname{ran}(\wA-\lambda )&=(F^{\prime\bar\lambda }_{W})^*
\operatorname{ran}(T+G^\lambda _{V,W})+H\ominus W_{\bar\lambda }.\label{2.38}
\end{align}
\end{theorem}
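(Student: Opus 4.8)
The plan is to mirror, for general $T:V\to W$, the argument already carried out for the special case $T=B$, $V=Z$, $W=Z'$ in Lemmas \ref{Lemma2.7}, \ref{Lemma2.8} and Theorem \ref{Theorem2.9}, using the extra flexibility that $V_\lambda$ and $W_{\bar\lambda}$ are now proper closed subspaces. First I would establish the analogue of Lemma \ref{Lemma2.7}: for $\lambda\in\varrho(\wA)$, every $f\in W$ can be written as $f=(Az^\lambda)_W - T\pr_\zeta z^\lambda$ for a unique $z^\lambda\in Z_\lambda$ with $\pr_\zeta z^\lambda\in D(T)$. Surjectivity: start from $v=A_\beta^{-1}\ij_{W\to H}f\in D(A_\beta)$, so $\pr_\zeta v=0\in D(T)$ and $(Av)_W=f$; put $x=(\wA-\lambda)^{-1}(A-\lambda)v$, which lies in $D(\wA)$, hence $\pr_\zeta x\in D(T)$ by \eqref{2.7}, and $v-x\in Z_\lambda$; since $(A(v-x))_W - T\pr_\zeta(v-x)$ is the defining boundary functional and it agrees on $v$ and vanishes on $x$ (because $x\in D(\wA)$ forces $(Ax)_W=T\pr_\zeta x$), we get $z^\lambda=v-x$. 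Injectivity of $f\mapsto z^\lambda$: if $f=0$ then $z^\lambda\in D(\wA-\lambda)$, which meets $Z_\lambda$ only in $0$ since $\lambda\in\varrho(\wA)$. This defines $M_{\wA}(\lambda):W\to V$ by $M_{\wA}(\lambda)f=\pr_\zeta z^\lambda$, and tracing the three steps (apply $A_\beta^{-1}\ij_{W\to H}$, subtract $(\wA-\lambda)^{-1}(\Ama-\lambda)$, apply $\pr_\zeta$) yields exactly formula \eqref{2.41}; closedness of $T$ gives closedness of $M_{\wA}(\lambda)$, hence boundedness by the closed graph theorem, and holomorphy in $\lambda\in\varrho(\wA)$ is read off \eqref{2.41} from holomorphy of $(\wA-\lambda)^{-1}$.

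Next, for $\lambda\in\varrho(A_\beta)$, I would identify $-M_{\wA}(\lambda)^{-1}$ with $T+G^\lambda_{V,W}$ exactly as in \eqref{ 2.32a}: write $z^\lambda=E^\lambda_V z$ with $z=\pr_\zeta z^\lambda\in D(T)$ (using \eqref{2.17}, \eqref{2.18}), use $Az^\lambda=\lambda z^\lambda$ and $\pr_W E^\lambda z$ to turn $(Az^\lambda)_W - Tz$ into $(-G^\lambda_{V,W}-T)z$ via \eqref{2.19} and \eqref{2.22}, so that \eqref{2.36} becomes $z=M_{\wA}(\lambda)(-(T+G^\lambda_{V,W}))z$. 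Together with the surjectivity statement this shows $-M_{\wA}(\lambda)$ is a two-sided inverse of $T+G^\lambda_{V,W}$ on the appropriate domains; then \eqref{2.21} rewrites $T+G^\lambda_{V,W}=(E^{\prime\bar\lambda}_W)^*T^\lambda E^\lambda_V$, which is a homeomorphism composed on both sides with $T^\lambda$, so $M_{\wA}(\lambda)$ is homeomorphic to $(T^\lambda)^{-1}$ as claimed, and $T^\lambda$ is invertible from $E^\lambda_V D(T)$ onto $W_{\bar\lambda}$ whenever $\lambda\in\varrho(\wA)\cap\varrho(A_\beta)$.

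Finally, the resolvent formula \eqref{2.37} follows by substituting $(T^\lambda)^{-1}=E^\lambda_V(T+G^\lambda_{V,W})^{-1}(E^{\prime\bar\lambda}_W)^*$ from Corollary \ref{Corollary2.5}, i.e.\ \eqref{2.24}, into the Kre\u\i{}n formula \eqref{2.15} and replacing $(T+G^\lambda_{V,W})^{-1}$ by $-M_{\wA}(\lambda)$; this is exactly the passage from \eqref{2.25} to \eqref{2.34} in the special case, done verbatim. The identities \eqref{2.38} are immediate from \eqref{2.14} once $\operatorname{ker}T^\lambda$ and $\operatorname{ran}T^\lambda$ are expressed through \eqref{2.23}: $\operatorname{ker}T^\lambda=F^\lambda_V{}^{-1}\operatorname{ker}(T+G^\lambda_{V,W})=E^\lambda_V\operatorname{ker}(T+G^\lambda_{V,W})$ and $\operatorname{ran}T^\lambda=(F^{\prime\bar\lambda}_W)^*\operatorname{ran}(T+G^\lambda_{V,W})$, and $H\ominus W_{\bar\lambda}$ is unchanged. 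The main obstacle I anticipate is bookkeeping rather than substance: one must be careful that the restricted maps $E^\lambda_V$, $E^{\prime\bar\lambda}_W$ and their inverses, together with the injections $\ij_{V_\lambda\to H}$ and $\ij_{W\to H}$ and the projections $\pr_{W_{\bar\lambda}}$, compose exactly as written, since $T$ is only densely defined in $V$ and $V$, $W$ are genuine subspaces — so the ``modulo homeomorphisms'' statements and the domains in \eqref{2.36} and \eqref{2.41} have to be matched precisely, but no new analytic input beyond \cite{G68, G74} and Section \ref{sec:2.1} is needed.
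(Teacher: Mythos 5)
Your proposal is correct and takes essentially the same route as the paper's own proof, which explicitly proceeds ``following the lines of proofs of Lemma~\ref{Lemma2.7} and~\ref{Lemma2.8}'': set $v=A_\beta^{-1}f$, $x=(\wA-\lambda)^{-1}(A-\lambda)v$, $z^\lambda=v-x$, verify surjectivity and uniqueness to define $M_{\wA}(\lambda)f=\pr_\zeta z^\lambda$, read off \eqref{2.41}, and for $\lambda\in\varrho(\wA)\cap\varrho(A_\beta)$ repeat the calculation of \eqref{ 2.32a} to identify $-M_{\wA}(\lambda)$ with $(T+G^\lambda_{V,W})^{-1}$, then invoke Corollaries~\ref{Corollary2.3} and~\ref{Corollary2.5} for \eqref{2.37} and \eqref{2.38}. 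The only point you spell out beyond the paper's wording is that boundedness of $M_{\wA}(\lambda)$ follows from closedness of $T$ via the closed graph theorem, which the paper notes just after Lemma~\ref{Lemma2.8} and implicitly carries over.
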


\begin{proof} 
Following the lines of proofs of Lemma \ref{Lemma2.7} and \ref{Lemma2.8}, we define 
$M_{\wA}(\lambda )$ satisfying (\ref{2.36}) as
follows: Let $f\in W$. Let $v=A_\beta ^{-1}f$; then $\pr_\zeta v=0\in
D(T)$, and \begin{equation}
(Av )_W-T\pr_\zeta v =Av=f.\label{2.39}
\end{equation}
Next, let $x =(\wA-\lambda
)^{-1}(A-\lambda )v$, then $z^\lambda =v-x$ lies in $Z_\lambda $ and
satisfies $\pr_\zeta z^\lambda \in D(T)$ (since $\pr_\zeta v=0$ and
$\pr_\zeta x\in D(T)$, cf.\ (\ref{2.6})). This $z^\lambda $ satisfies
\begin{equation}
(Az^\lambda )_W- T\pr_\zeta z^\lambda =f,\label{2.40}
\end{equation}
in view of (\ref{2.39}) and the fact that $x\in D(\wA)$.

Next, observe that for any vector $z^\lambda \in Z_\lambda $ with
$\pr_\zeta z^\lambda \in D(T)$ such
that (\ref{2.40}) holds, $f=0$ implies 
$z^\lambda =0$, since such a $z^\lambda $ lies in the two linearly
independent spaces $D(\wA-\lambda )$ and $Z_\lambda $. So there is
indeed a mapping from $f$ to $\pr_\zeta z^\lambda $ solving (\ref{2.40}),
for any $f\in W$, defining $M_{\wA}(\lambda )$. It is described by
(\ref{2.41}).
The holomorphicity in $\lambda \in \varrho (\wA)$ is seen from this formula.

The mapping is connected with $T^\lambda $ (cf.\ Corollary \ref{Corollary2.3}) as
follows: 

When $\lambda \in \varrho (\wA)\cap \varrho (A_\beta )$, then
$z=\pr_\zeta z^\lambda =F^\lambda _Vz^\lambda $, and $z^\lambda
=E^\lambda _Vz$, so the vectors $z^\lambda $ with $\pr_\zeta z^\lambda
\in D(T)$ constitute the space $E^\lambda _VD(T)$. Calculating as in
(\ref{ 2.32a}) we then find that
\begin{align*}
z&=\pr_\zeta z^\lambda =M_{\wA}(\lambda )((A z^\lambda )_{W}-Tz )= 
M_{\wA}(\lambda )((\lambda  z^\lambda )_{W}-Tz )\nonumber\\
&= 
M_{\wA}(\lambda )((\lambda  E^\lambda z )_{W}-Bz )= M_{\wA}(\lambda )(-G^\lambda _{V,W} -T)z,
\end{align*}
so $M_{\wA}(\lambda )$ is the inverse of $-(T+G^\lambda
_{V,W}):D(T)\to W$. The remaining statements follow from Corollary \ref{Corollary2.3}
and Corollary \ref{Corollary2.5}.
\end{proof}
Note that when $M_{\wA}(\lambda )$ is considered in a neighbourhood of
a spectral point of $\wA$ in $\varrho (A_\beta )$, then we have not
only information on the possibility of a pole of $M_{\wA}(\lambda )$,
but an inverse $T^\lambda $, from which
$\operatorname{ker}(\wA-\lambda )$ and $\operatorname{ran}(\wA-\lambda
)$ can be read off.

\section{Applications to elliptic partial differential operators}\label{sec3}
\subsection{Preliminaries}\label{sec3.1}  

For elliptic operators $A$ defined over an open subset $\Omega $ 
of ${\mathbb R}^n$,
$n>1$, the null-space of the
maximal realisation is infinite dimensional, so that there is much
more freedom of choice of boundary spaces and mappings than in
ODE cases. 
It is necessary to allow unbounded operators between boundary spaces
to obtain a theory covering the well-known cases. Moreover, there is the
problem of regularity of domains: For a given realisation $\wA$
representing a boundary condition, it is not always certain that
$\wA^*$ represents an analogous boundary condition, but this can often
be assured if $D(\wA)$ is known to be contained in the most regular Sobolev
space $H^m(\Omega )$, where $m$ is the order of $A$; this holds when
the boundary condition is {\it elliptic}.

The theory of pseudodifferential boundary operators (Boutet
de Monvel \cite{B66}, \cite{B71}, and  e.g.\
Grubb \cite{G90}--\cite{G08}) is
known as an efficient tool in the treatment of boundary value problems on
smooth sets (we call it the $\psi $dbo calculus for short, similarly
to the customary use of $\psi $do for
pseudodifferential operator). 
A guiding principle in the construction of
general theories would therefore be to make it possible to use the
$\psi $dbo calculus in applications to concrete operators. The
$\psi $dbo calculus is a theory for genuine {\it operators}
and their approximate solution operators, with many structural
refinements; it has not been customary to study {\it relations} in this context. We therefore find it adequate to
interpret  the realisations of elliptic operators in terms of the
theory based on \cite{G68}, that characterises the elements in $\cal
M$ by
operators, 
rather 
than  relations.

Let $\Omega $ be a smooth subset of ${\mathbb R}^n$, with $C^\infty $
boundary $\partial\Omega =\Sigma $, let $m$ be a positive  integer, 
and let $A=
\sum_{|\alpha |\le m}a_\alpha (x)D^\alpha $ be an $m$-th order differential operator on
$\Omega $ with coefficients in $C^\infty (\comega)$ and uniformly
elliptic
(i.e., the principal symbol $a^0(x,\xi )=\sum_{|\alpha |=m}a_\alpha (x)\xi ^\alpha $ is invertible for all $x\in
\comega$, all $\xi \in{\mathbb R}^n\setminus \{0\}$). The maximal and
minimal realisations in $H=L_2(\Omega )$ act like $A$ in the distribution sense, with
domains defined by
\begin{equation}
D(\Ama)=\{u\in L_2(\Omega )\mid Au\in L_2(\Omega )\}, \quad D(\Ami)=H^m_0(\Omega ),\label{3.1}
\end{equation}
 and it is well-known (from ellipticity arguments) that 
\begin{equation}
\Ami^*=\Ama',\quad \Ama^*=\Ami',\label{3.2}
\end{equation}
where $\Ama'$ and $\Ami'$ are the analogous operators for the formal
adjoint $A'$ of $A$. 

The operators belonging to $\cal M$ resp.\ $\cal M'$ defined as in Section \ref{sec:2.1}
are called the {\it realisations} of $A$ resp.\
$A'$. 

We denote by $H^s(\Omega )$ the 
Sobolev space over $\Omega $ of order $s$, namely 
the space of restrictions to $\Omega $ of the elements of $H^s({\mathbb
R}^n)$, which consists of the distributions $u\in \cal S'$ such that
$(1+|\xi |^2)^{s/2}\hat u\in L_2({\mathbb R}^n)$. (${\cal S}'={\cal
S}'({\mathbb R}^n)$ is Schwartz' space of temperate distributions.) By $H^s_0(\comega )$ we
denote the subspace of $H^s({\mathbb R}^n)$ of elements supported in
$\comega$. 
For $s>-\frac12$, $s-\frac12$ not integer, this space can be
identified with  the
closure of $C_0^\infty (\Omega )$ in $H^s(\Omega )$, also denoted $H^s_0(\Omega )$. 
Sobolev spaces over $\Sigma $, $H^s(\Sigma )$, are defined  by use of
local coordinates.
We denote $\gamma
_ju=(\partial_n  ^ju)|_{\Sigma }$, where $\partial_n $ is the
derivative along the interior normal $\vec n $ at $\Sigma $. Here $\gamma
_j$ maps $H^s(\Omega )\to H^{s-j-\frac12}(\Sigma  )$ for
$j<s-\frac12$. For $j<m$ there is an extension $\gamma _j:D(\Ama)\to
H^{-j-\frac12}(\Sigma  )$, cf.\ e.g.\ Lions and Magenes \cite{LM68}.

Let us briefly recall the relevant elements of the $\psi $dbo
calculus. In its general form it treats operators 
\begin{equation}\label{3.2a}
\mathcal A=\begin{pmatrix} P_++G&K\\ &\\ T&S\end{pmatrix}:
\begin{matrix} C^\infty(\overline\Omega )^N&&C^\infty(\overline\Omega)^{N'}\\
\times&\to&\times\\ C^\infty(\Sigma )^M&&C^\infty(\Sigma )^{M'}\end{matrix}.
\end{equation}
Here $T$ is a generalized {\it trace operator}, going from $\Omega$ to $\Sigma $; $K$ is a so-called {\it Poisson operator}
(called a potential operator or coboundary
operator in some other
texts), going from $\Sigma $ to $\Omega$; $S$ is a {\it
pseudodifferential operator on} $\Sigma $; and $G$ is an
operator on $\Omega$ called a {\it singular Green operator},
 a
non-pseudodifferential term that has to be included in order to have
adequate composition rules. $P$ is a $\psi $do defined on an open set
$\widetilde \Omega \supset\comega$, and $P_+$ is its truncation to
$\Omega $,
defined by
$P_+ u=r^+Pe^+u$, where $r^+$ restricts $ {\cal
D}'(\widetilde{\Omega })$ to ${\cal D}'({\Omega })$, and $e^+$ extends locally
integrable functions on $\comega$ by zero on $\widetilde\Omega
\setminus \comega $. $P$ is assumed to satisfy the so-called
transmission condition at $\Sigma $; this holds for the operators
derived from elliptic differential operators that we consider
here. There are suitable Sobolev space mapping properties in terms of
the orders of the entering operators.

A solvable elliptic PDE problem 
\begin{equation}\label{3.2b}
Au=f\text{ on }\Omega ,\quad  Tu=\varphi \text{ on }
\Sigma ,
\end{equation} 
enters in this framework by an operator (where we suppress
the index + on $A$ since it acts locally) 
\begin{equation}\label{3.2c}
\begin{pmatrix} A\\ \quad\\T\end{pmatrix}:C^\infty (\comega )^N\to\begin{matrix} C^\infty(\overline\Omega )^N\\
\times\\ C^\infty(\Sigma )^{M'}\end{matrix} 
\end{equation}
(note that $M=0$ and $M'>0$), with the inverse
\begin{equation}\label{3.2d}
\begin{pmatrix} R&K\end{pmatrix}:  \begin{matrix} C^\infty(\overline\Omega )^N\\
\times\\ C^\infty(\Sigma )^{M'}\end{matrix}\to C^\infty (\comega )^N;\quad R=Q_++G,
\end{equation}
where $Q_++G$ solves the problem \eqref{3.2b} with $\varphi =0$ and
$K$ solves the problem \eqref{3.2b}
with $f=0$.
Here $Q$ is a parametrix of $A$ on $\widetilde \Omega $ (for example
in case $A=-\Delta $, $Q$ is the convolution with $c_n|x|^{2-n}$ on ${\mathbb R}^n$ when
$n\ge 3$), and $G$ is the correction term needed to make $Q_++G$ map
into the functions satisfying the homogeneous boundary condition.

Besides providing a convenient terminology, the $\psi $dbo calculus
has the advantage that it gives complete composition rules:
When $\cal A$ and $\cal A'$ are two systems as in \eqref{3.2a}, the
composed operator $\cal A\cal A'$ again has this structure. In
particular, a composition $TK$ gives a $\psi $do on $\Sigma $, and a
composition $KT$ gives a singular Green operator. Compositions $TP_+$,
$TG$ and $ST$ give trace operators, compositions $P_+K$, $GK$ and $KS$
give Poisson operators. These are the facts that we shall mainly use in the
present paper. Details on the $\psi $dbo calculus are found e.g.\ in
\cite{G96}, \cite{G08}.

\subsection{A typical second-order case}
\label{sec3.2}

To give an impression of the theory, we begin by studying in some detail the case of a
second-order strongly elliptic operator $A$. This part is divided into five subsections. In the first one we introduce boundary triplets for the operator $A$. In the next three subsections we concentrate on the case of ``pure conditions'', i.e.~when $T:Z\to Z'$. For this case, we show in \ref{sec:trans} how $T$ can be identified with an operator $L$ representing a Neumann-type boundary condition. In \ref{sec:mfn}, we study the corresponding $M$-function, proving, among other things, a Kre\u\i{}n-type resolvent formula. Subsection \ref{sec:ell} takes a closer look at problems with elliptic boundary conditions. Finally, in \ref{sec:sub}, we consider the general case when $T:V\to W$ and $V,W$ are subspaces of $Z$ and $Z'$, respectively.

\subsubsection{Boundary triplets}
We begin by introducing boundary triplets for the case of a
second-order strongly elliptic operator $A$
 i.e., with
$\operatorname{Re}a^0(x,\xi )\ge c_0|\xi |^2$ for $x\in\comega $ and 
$\xi \in{\mathbb
R}^n$ ($c_0>0$), taking $\Omega $ bounded. 
Let $s_0(x)$ be the (nonvanishing) coefficient of $-\partial_n ^2$ when $A$ is
written in normal and tangential coordinates at a boundary point $x$,
then
$A$ has the  Green's formula 
\begin{equation}
(Au,v)_{L_2(\Omega )}-(u,A'v)_{L_2(\Omega )}=(s_0\gamma _1u,\gamma _0v)_{L_2(\Sigma )}-
(\gamma _0u, \bar s_0\gamma _1v+{\cal A}'_0\gamma _0v)_{L_2(\Sigma )},\label{3.3}
\end{equation}
 for $u,v\in H^2(\Omega )$, with a suitable first-order differential
operator ${\cal A}'_0$ over $\Sigma $. We denote $s_0\gamma
_1=\nu_1 $, $\bar s_0\gamma _1=\nu_1 '$.

 A simple example was
explained in \cite[ Section 7]{BMNW08}, namely
\begin{equation}
A=-\Delta +p(x)\cdot \grad ,\text{ with formal adjoint }A'v=-\Delta v
-\divg(\bar p v);\label{3.4}
\end{equation}
where $p$ is an $n$-vector of functions in $C^\infty (\comega )$. 

We can
assume, after addition of a constant to $A$ if necessary, that the
Dirichlet problem for $A$ is uniquely solvable.

The Dirichlet realisation $A_\gamma $ is the operator lying in $\cal M$ with
domain
$$
D(A_\gamma )=D(\Ama)\cap H^1_0(\Omega)=H^2(\Omega )\cap H^1_0(\Omega)
$$
(the last equality follows by elliptic regularity theory); it has $0\in\varrho (A_\gamma )$.
Let 
\begin{equation}
Z^s_\lambda (A)=\{u\in H^s(\Omega )\mid (A-\lambda )u=0\},\label{3.5}
\end{equation}
for $s\in{\mathbb R}$. It is known from \cite{LM68} that the trace
operators $\gamma _0$ and $\gamma _1$, hence also $\nu_1 $, extend by
continuity to continuous maps
\begin{equation}
\gamma _0: Z^s_\lambda (A)\to H^{s-\frac12}(\Sigma ),\quad
\gamma _1\text{ and }\nu_1 : Z^s_\lambda (A)\to H^{s-\frac32}(\Sigma ),\label{3.6}
\end{equation}
for all $s\in{\mathbb R}$. When $\lambda \in \varrho (A_\gamma )$, let
$K^\lambda _{\gamma }:\varphi \mapsto u$ denote the Poisson operator solving the
semi-homogeneous Dirichlet problem
\begin{equation}
(A-\lambda )u=0\text{ in }\Omega ,\quad \gamma _0u= \varphi .\label{3.7}
\end{equation}
 It maps continuously
\begin{equation}H^{s-\frac12}(\Sigma )\to
H^{s}(\Omega ), \text{ for all }s\in{\mathbb R}.
\end{equation}
Moreover, it maps
$H^{s-\frac12}(\Sigma )$ homeomorphically onto 
 $ Z^s_\lambda (A)$
for all $s\in {\mathbb R}$, with $\gamma _0$ acting as an inverse
there.
(We introduce below a special notation for the restricted operator when
$s=0$, see \eqref{3.17}.)
Analogously, there is a Poisson operator $K^{\prime \bar\lambda
}_\gamma $ solving (\ref{3.7}) with $A-\lambda $ replaced by $A'-\bar\lambda
$, mapping $H^{s-\frac12}(\Sigma )$ homeomorphically onto
$Z^s_{\bar\lambda }(A')$,  with $\gamma _0$ acting as an inverse there.

Now define the Dirichlet-to-Neumann operators for each $\lambda \in
\varrho (A_\gamma )$, 
\begin{equation}
P^\lambda _{\gamma _0,\nu_1 }=\nu_1 K^\lambda _\gamma ;\quad 
P^{\prime\bar\lambda }_{\gamma _0,\nu_1 '}=\nu_1 'K^{\prime\bar\lambda }_\gamma ;
 \label{3.9}
 \end{equation}
they are a first-order elliptic pseudodifferential operators over
$\Sigma $, continuous and Fredholm from
$H^{s-\frac12}(\Sigma )$ to $H^{s-\frac32}(\Sigma )$
for all $s\in{\mathbb R}$ (details e.g.\ in \cite{G71}).

We shall use the notation for general trace maps $\beta $ and $\eta $:
\begin{equation}
P^\lambda _{\beta ,\eta }: \beta u\mapsto \eta u,\quad u\in
Z^s_\lambda (A), 
  \label{3.10}
 \end{equation}
when this operator is well-defined.

Introduce the trace operators $\Gamma $ and $\Gamma '$ (from \cite{G68},
where they were called $M$ and $M'$)  by 
\begin{equation}
\Gamma u=\nu_1 u-P^0_{\gamma _0,\nu_1 }\gamma _0u,\quad \Gamma'
u=\nu_1 'u-P^{\prime 0}_{\gamma _0,\nu_1 '}\gamma _0u. 
 \label{3.11}
 \end{equation}
Here $\Gamma $ maps $D(\Ama)$ continuously onto
$H^{\frac12}(\Sigma )$ and can also be written $\Gamma
=\nu_1 A_\gamma ^{-1}\Ama$, and $\Gamma '$ has the analogous properties.
Moreover,  a generalised Green's formula is valid {\it for all} 
$u\in D(\Ama)$, $v\in D(\Ama')$:
\begin{equation}
(Au,v)_{L_2(\Omega )}-(u,A'v)_{L_2(\Omega )}=(\Gamma  u,\gamma _0v)_{\frac12,
-\frac12}-(\gamma _0u, \Gamma 'v)_{-\frac12,\frac12},\label{3.12} 
\end{equation}
where $(\cdot,\cdot)_{s ,-s }$ denotes the duality pairing
between $H^s (\Sigma )$ and $H^{-s }(\Sigma
)$. Furthermore,
\begin{equation}
(Au,w)_{L_2(\Omega )}=(\Gamma u,\gamma _0w)_{\frac12,-\frac12}\text{ for all
}w\in Z^0_0(A').\label{3.13}
\end{equation}
(Cf.\cite [ Th. III 1.2]{G68}.)

To achieve $L_2(\Sigma )$-dualities in the right-hand side of
(\ref{3.12}), one can choose the norms in $H^{\pm\frac12}(\Sigma )$ to be
induced by suitable isometries from the norm in $L_2(\Sigma )$. There
exists a family of pseudodifferential elliptic invertible
operators $\Lambda _s$ of order $s\in {\mathbb R}$ on $\Sigma $,
 symmetric with respect to the duality in $L_2(\Sigma )$ and with $\Lambda _{-s}=\Lambda _s^{-1}$, such that
when each $H^s(\Sigma )$ is provided with the norm for which $\Lambda
_s$ is an
isometry from  $H^s(\Sigma )$ onto $L_2(\Sigma )$, $\Lambda _s$ also maps
 $H^{t}(\Sigma
)$ isometrically onto $H^{t-s}(\Sigma )$, all $t$, and 
 \begin{equation}
(\Lambda _{-s}\varphi , \Lambda _s \psi
)_{s,-s}=(\varphi ,\psi )_{L_2(\Sigma )}, \quad \varphi
,\psi \in L_2(\Sigma ).\label{3.14}
\end{equation}
Then when we introduce composed operators
\begin{equation}
\Gamma _1=\Lambda _{\frac12} \Gamma , \quad \Gamma '_1=\Lambda _{\frac12} \Gamma ', \quad 
\Gamma _0=\Lambda _{-\frac12}\gamma _0=\Gamma '_0;\label{3.15}
\end{equation}
(\ref{3.12}) takes the form (\ref{2.27}) with ${\cal H=\cal
K}=L_2(\Sigma )$:

\begin{prop}\label{Proposition3.1} For the adjoint pair $\Ami$ and $\Ami'$, {\rm (\ref{3.15})} provides a boundary
triplet  with $
{\cal H=\cal K}=L_2(\Sigma )$:
\begin{equation}
(Au,v)_{L_2(\Omega )}-(u,A'v)_{L_2(\Omega )}=(\Gamma _1 u,\Gamma '_0v)_{L_2(\Sigma
)}-(\Gamma _0u, \Gamma _1'v)_{L_2(\Sigma )},\label{3.16}
\end{equation}
holds when $u\in D(\Ama)$, $v\in D(\Ama')$:
\end{prop}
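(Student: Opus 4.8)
The plan is to derive the boundary triplet identity \eqref{3.16} directly from the generalised Green's formula \eqref{3.12}, which is already available for all $u\in D(\Ama)$, $v\in D(\Ama')$, by inserting the isometries $\Lambda_{\pm\frac12}$ and invoking the pairing identity \eqref{3.14}. Concretely, I would start from the right-hand side of \eqref{3.12}, namely $(\Gamma u,\gamma_0 v)_{\frac12,-\frac12}-(\gamma_0 u,\Gamma' v)_{-\frac12,\frac12}$, and rewrite each duality pairing using the fact that $\Lambda_{\frac12}$ maps $H^{\frac12}(\Sigma)$ isometrically onto $L_2(\Sigma)$ while $\Lambda_{-\frac12}=\Lambda_{\frac12}^{-1}$ maps $H^{-\frac12}(\Sigma)$ isometrically onto $L_2(\Sigma)$.

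For the first term, write $\Gamma u=\Lambda_{-\frac12}(\Lambda_{\frac12}\Gamma u)=\Lambda_{-\frac12}\Gamma_1 u$ and $\gamma_0 v=\Lambda_{\frac12}(\Lambda_{-\frac12}\gamma_0 v)=\Lambda_{\frac12}\Gamma'_0 v$, so that by \eqref{3.14} with $\varphi=\Gamma_1 u$, $\psi=\Gamma'_0 v$ one gets $(\Gamma u,\gamma_0 v)_{\frac12,-\frac12}=(\Lambda_{-\frac12}\Gamma_1 u,\Lambda_{\frac12}\Gamma'_0 v)_{\frac12,-\frac12}=(\Gamma_1 u,\Gamma'_0 v)_{L_2(\Sigma)}$. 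Symmetrically, for the second term write $\gamma_0 u=\Lambda_{-\frac12}\Gamma_0 u$ (note $\Gamma_0=\Lambda_{-\frac12}\gamma_0$, so $\gamma_0 u=\Lambda_{\frac12}\Gamma_0 u$; one must be careful which direction the isometry goes) and $\Gamma' v=\Lambda_{-\frac12}\Gamma'_1 v$, and apply \eqref{3.14} again to obtain $(\gamma_0 u,\Gamma' v)_{-\frac12,\frac12}=(\Gamma_0 u,\Gamma'_1 v)_{L_2(\Sigma)}$. Substituting both into \eqref{3.12} yields \eqref{3.16} exactly. I would also note that the mapping properties stated just after \eqref{3.11} and \eqref{3.15} guarantee that $\Gamma_1,\Gamma'_1:D(\Ama)\to L_2(\Sigma)$ and $\Gamma_0=\Gamma'_0:D(\Ama)\to L_2(\Sigma)$ are bounded in the graph norm, and that they are surjective (since $\Gamma$ maps $D(\Ama)$ onto $H^{\frac12}(\Sigma)$ and $\gamma_0$ maps onto $H^{-\frac12}(\Sigma)$, composing with the isometries preserves surjectivity), so that all the defining requirements of a boundary triplet from \eqref{2.26}--\eqref{2.27} are met; the kernel condition $D(\Ami)=D(\Ama)\cap\ker\Gamma_1\cap\ker\Gamma_0$ follows since $\ker\Gamma_1=\ker\Gamma$, $\ker\Gamma_0=\ker\gamma_0$, and $H^m_0(\Omega)=D(\Ami)$ is characterised by vanishing of $\gamma_0$ and $\Gamma$ (equivalently $\nu_1$) on $D(\Ama)$.

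The one point requiring genuine care — the main obstacle, such as it is — is bookkeeping the directions of the isometries $\Lambda_{\pm\frac12}$ and matching them correctly against the two sides of each duality pairing so that \eqref{3.14} applies verbatim; a sign or a mismatched exponent here would corrupt the final identity. There is no deep difficulty: everything reduces to the already-established formula \eqref{3.12} plus the formal manipulation with $\Lambda_s$, so the proof is essentially a verification. I would therefore present it compactly, citing \eqref{3.12} for the underlying Green's formula, \eqref{3.14} for the isometry pairing, and \eqref{3.15} for the definitions of $\Gamma_0,\Gamma_1,\Gamma'_1$, and then simply record the two term-by-term identities above before concluding \eqref{3.16}.
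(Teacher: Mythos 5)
Your proposal is correct and follows essentially the same reasoning the paper uses: Proposition~\ref{Proposition3.1} is presented as an immediate consequence of inserting the isometries $\Lambda_{\pm\frac12}$ into the generalised Green's formula \eqref{3.12} via the pairing identity \eqref{3.14}, together with the surjectivity and graph-norm boundedness already established for $\Gamma$, $\Gamma'$ and $\gamma_0$. Your additional verification of the kernel condition $D(\Ami)=D(\Ama)\cap\ker\Gamma_1\cap\ker\Gamma_0$ (reducing it to $\gamma_0u=0$ and $\nu_1u=0$, i.e.\ $u\in D(A_\gamma)\subset H^2(\Omega)$ with vanishing Cauchy data, hence $u\in H^2_0(\Omega)$) is the right way to close that point, which the paper leaves implicit.
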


Such reductions to $L_2$-dualities are made in \cite{BMNW08} and \cite{Pos07}.
\cite{G68} did not make the modification by composition with $\Lambda _{\pm\frac12}$, but
worked directly with (\ref{3.12}). (This was in order to avoid introducing
too many operators. Another reason was that the Sobolev spaces
$H^s (\Sigma )$ do not have a ``preferred norm'' when
$s \ne 0$; only the duality $(\cdot,\cdot)_{s ,-s }$
should be consistent with the self-duality of $L_2(\Sigma
)$. Moreover, when the realisation $\wA$ represents an {\it elliptic}
boundary condition, $D(\wA)\subset H^2(\Omega )$ and the boundary
values are in $L_2(\Sigma )$. ---
Various homeomorphisms were used in
\cite{G74} for the sake of numerical comparison.)

In the rest of this section, {\it we use the abbreviation $H^s$ for $H^s(\Sigma )$}. 
We shall keep the formulation with dualities in the study of
pure Neumann-type boundary conditions, but return to (\ref{3.15}) in
connection with more general boundary conditions.

\subsubsection{Interpretation of the boundary conditions}\label{sec:trans}
Consider the set-up of Section 2.1 with $A_\beta =A_\gamma $, the
projection $\pr_\beta $ being denoted $\pr_\gamma $. The realisation
$A_\gamma $ itself of course corresponds to the case
$V=W=\{0\}$ in Theorem \ref{Theorem2.1}. 

Let $\wA$ be a closed realisation which corresponds to an operator $T$ with
$V=Z$, $W=
Z'$ by Theorem \ref{Theorem2.1}. Note that 
\[Z=Z^0_0(A), \;Z'=Z^0_0(A')\text{, and for }
\lambda \in \varrho (A_\gamma ), \;Z_\lambda =Z^0_\lambda (A),\;
Z'_{\bar\lambda }=Z^0_{\bar\lambda }(A');\]
 closed subspaces of $L_2(\Omega )$.

Denote the
restrictions of $\gamma _0$ to mappings from $Z_\lambda $ resp.\ $Z'_{\bar\lambda }$ to
$H^{-\frac12}$ by $\gamma _{Z_\lambda }$ resp.\
$\gamma _{Z'_{\bar\lambda }}$; they are homeomorphisms
\begin{equation}
\gamma _{Z_\lambda }: Z_\lambda \simto H^{-\frac12},\quad 
\gamma _{Z'_{\bar\lambda }}: Z'_{\bar\lambda }\simto H^{-\frac12},
\label{3.17}
\end{equation}
and their inverses $\gamma _{Z_\lambda }^{-1}$ resp.\ $\gamma
_{Z'_{\bar\lambda }}^{-1}$ coincide with $K^{\lambda }_\gamma$ resp.\
$K^{\prime\bar\lambda }_\gamma$ but have the restricted range
space. Their adjoints map
\[ 
\gamma _{Z_{\lambda }}^* : H^{\frac12}\simto Z_{\lambda
},\quad
\gamma _{Z'_{\bar\lambda }}^* : H^{\frac12}\simto Z'_{\bar\lambda }.
\]
When $\lambda =0$, the $\lambda $-indications are left out.

We shall first interpret $\wA$ in terms of a boundary
condition using the maps with $\lambda =0$; this stems from
\cite{G68}. 
The above homeomorphisms allow ``translating'' an operator $T:Z\to Z'$ to
an operator $L:H^{-\frac12}\to H^\frac12$, as in the diagram
\begin{equation}\label{tag3.20b}
\CD
Z     @>  \gamma _Z  >>    H^{-\frac12}\\
@VTVV           @VV  L  V\\
   Z'  @>>(\gamma _{Z'}^*)^{-1} >   H^{\frac12}
\endCD 
\hskip1cm D(L)=\gamma _ZD(T),
 \end{equation}
where the horizontal maps are homeomorphisms. In other words,
\begin{equation}
 L=(\gamma _{Z'}^*)^{-1}T\gamma _Z^{-1},\text{ with }D( L)
=\gamma _ZD(T)
\label{3.20}
\end{equation} 
 (a closed densely defined operator from  $H^{-\frac12}$ to
$H^{\frac12}$). Hereby we have, when $\varphi =\gamma _Z z\in D(L)$,
$\psi =\gamma _{Z'}w\in H^{-\frac12}$,
\[
(Tz,w)_{Z'}=(T\gamma
_Z^{-1}\varphi ,\gamma _{Z'}^{-1}\psi )_{Z'}= (L\varphi ,\psi )_{\frac12,-\frac12}. 
\]
Note that $D(L)=\gamma _0D(T)=\gamma _0\pr_\zeta D(\wA)=\gamma _0D(\wA)$,
since $\gamma _0$ vanishes on $D(A_\gamma )$.

Recall the
equation defining $T$ from $\wA$:
\begin{equation}\label{3.17a}
(T\pr_\zeta u,w)=(Au,w)\text{ for }u
\in D(\wA),\; w\in Z'.
\end{equation}
In view of (\ref{3.13}), the right-hand side may be written
 \begin{equation}
(Au,w)=(\Gamma u,\gamma _0w)_{\frac12,-\frac12}=(\nu_1 u-P_{\gamma _0,\nu_1 }\gamma _0u,\gamma _0w)_{\frac12,-\frac12}
\text{ for
all }w\in Z'.\label{3.18}
 \end{equation}
For the left-hand side we have with $L$ defined above, using that $\gamma _Z\pr_\zeta u=\gamma _0(u-\pr_\gamma u)=\gamma _0u$,
\[
(T\pr_\zeta u,w)=(L\gamma _Z\pr_\zeta u,\gamma _{Z'}
w)_{\frac12,-\frac12}=(L\gamma _0u,\gamma _0w)_{\frac12,-\frac12}. 
\]
Then, when we write $\gamma _0w=\psi $, \eqref{3.17a} takes the form
\begin{equation}
(L\gamma _0u ,\psi )_{\frac12,-\frac12}=(\nu_1 u
-P^0_{\gamma _0,\nu_1}\gamma _0u,\psi )_{\frac12,-\frac12}
\text{ for all }u\in
D(\wA), \text{ all }\psi \in H^{-\frac12}.   
\end{equation}
Since $\psi $ runs through $H^{-\frac12}$, this may be written $L\gamma
_0u=\nu_1 u-P^0_{\gamma _0,\nu_1}\gamma _0u$, or, 
 \begin{equation}
 \nu_1 u=(L+P^0_{\gamma _0,\nu_1})\gamma _0u, \quad \gamma
_0u\in D(L).\label{3.22}
\end{equation}
So in fact $\wA$ represents a Neumann-type boundary condition \eqref{3.22}.

Conversely, if we want $\wA$ to represent a given Neumann-type boundary condition
\begin{equation}
\nu_1 u=C\gamma _0u,\label{3.23}
\end{equation}
where $C$ is a $\psi $do  
over $\Sigma $, we see that $L$ has
to be taken to act like
\begin{equation}
L=C-P^0_{\gamma _0,\nu_1}.\label{3.24}
\end{equation}

Now let us turn to the $\lambda $-dependent case. Here we consider the
families $\wA-\lambda $
and $T^\lambda $ and can proceed in a very similar way. When working
with the concrete boundary Sobolev spaces we find the advantage that $Z$ and
$Z_\lambda $ are mapped by $\gamma _0$ to the same space
$H^{-\frac12}$. In fact, 
\begin{equation}\label{3.24a}
\gamma
_{Z_\lambda }=\gamma _Z F^\lambda _Z,\quad \gamma _Z=\gamma _{Z_\lambda
}E^\lambda 
_Z,\text{ and similarly }\gamma
_{Z'_{\bar\lambda }}=\gamma _{Z'} F^{\prime\bar\lambda }_{Z'},\quad
\gamma
_{Z'}=\gamma _{Z'_{\bar\lambda }} E^{\prime\bar\lambda }_{Z'},
\end{equation} 
since, e.g., $\gamma _0F^\lambda u=\gamma _0(u-\lambda A_\gamma
^{-1}u)=\gamma _0u$, cf.\ (\ref{2.16}).

Let $\lambda \in \varrho (A_\gamma )$. In the defining equation for
$T^\lambda $,
 \begin{equation}
(T^\lambda \pr^\lambda _\zeta u,w)=((A-\lambda )u,w)\text{ for }u
\in D(\wA), w\in Z'_{\bar\lambda },\label{3.25}
\end{equation}
we rewrite the two sides as
\begin{align*}
 (T^\lambda u^\lambda _\zeta ,w)&=(L^\lambda \gamma _{Z_\lambda
}u^\lambda _\zeta , \gamma _{Z'_{\bar\lambda }}w)_{\frac12,-\frac12},\\
((A-\lambda )u,w)&=(\nu_1 u-P^\lambda _{\gamma _0,\nu_1}\gamma _0u,\gamma _0
w )_{\frac12,-\frac12}, 
\end{align*}
where $L^\lambda
:H^{-\frac12}\to H^\frac12$ is defined by
\begin{equation}
L^\lambda =(\gamma _{Z'_{\bar\lambda }}^*)^{-1}T^\lambda \gamma
_{Z_\lambda }^{-1},\quad D(L^\lambda )=\gamma _{Z_\lambda }D(T^\lambda );
\label{3.26}
 \end{equation}
note that 
\begin{equation}
D(L^\lambda )=\gamma _0D(T^\lambda )=\gamma _0E^\lambda D(T )
=\gamma _0 D(T)=D(L).
\end{equation}
Then since $ \gamma _{Z_\lambda
}u^\lambda _\zeta =\gamma _0u$, the operator $\wA-\lambda $ represents the
boundary condition
\begin{equation}
L^\lambda\gamma _0u=\nu_1 u-P^\lambda _{\gamma _0,\nu_1}\gamma
_0u,\quad \gamma _0u\in D(L^\lambda )=D(L).\label{3.27}
\end{equation}

Moreover, in view of Corollary \ref{Corollary2.5}, $L^\lambda $ is related to $T+G^\lambda $ as follows:
\[
\CD
Z     @>{E^\lambda _{Z}} >>Z _\lambda     @>  \gamma _{Z_\lambda }  >>    H^{-\frac12}\\
@V  T+G^\lambda  VV @VT^\lambda VV           @VV  L^\lambda   V\\
 Z' @>>(F^{\prime\bar\lambda }_{Z'})^* >  Z'_{\bar\lambda }
@>>(\gamma _{Z'_{\bar\lambda }}^*)^{-1} >   H^{\frac12}\endCD
\]
where the horizontal maps are homeomorphisms. In view of
\eqref{3.24a},  they compose to $\gamma _Z$ resp.\
$(\gamma _{Z'}^*)^{-1}$, so  
\begin{equation}
L^\lambda 
 =(\gamma ^*_{Z' })^{-1}(T+G^\lambda )\gamma _{Z }^{-1}
=L+(\gamma ^*_{Z' })^{-1}G^\lambda \gamma _{Z }^{-1}.
\label{3.27a}
\end{equation}

Since $D(\wA-\lambda )=D(\wA)$, (\ref{3.27}) and (\ref{3.22}) define the same
boundary condition, hence
\begin{equation}
L^\lambda =L+ P^0 _{\gamma _0,\nu_1}-P^\lambda _{\gamma _0,\nu_1 }\text{ on }D(L).\label{3.28}
\end{equation}

\begin{remark}\label{Remark3.2a} In particular, it can be inferred (e.g.\ from the case $L=0$) that $(\gamma ^*_{Z' })^{-1}G^\lambda \gamma
_{Z }^{-1}=P^0 _{\gamma _0,\nu_1}-P^\lambda _{\gamma _0,\nu_1 }$. 
Note how the operator family $L^\lambda $ (in this case where $V=Z$, $W=Z'$) is
written as the sum of a $\lambda $-independent operator 
$L$ (defining the domain of the realisation) and a $\lambda
$-dependent operator
$P^0 _{\gamma _0,\nu_1}-P^\lambda _{\gamma _0,\nu_1} $, which
is universal in the sense that it only depends on $A$, the set $\Omega
$, 
and $\lambda $.
It is useful to observe
that since $G^\lambda $ is continuous from $Z$ to $Z'$ for each
$\lambda $, $P^0 _{\gamma _0,\nu_1}-P^\lambda _{\gamma _0,\nu_1 }$ is continuous from $H^{-\frac12}$ to $H^\frac12$, hence is of
order $-1$, in contrast to its two individual terms that are
elliptic of order 1 (having the same principal symbol). 
\end{remark}

This analysis results in the theorem:

\begin{theorem}\label{Theorem3.4} For the second-order strongly elliptic operator
$A$ introduced above, let $\wA$ be a closed realisation with $\pr_\zeta
D(\wA)$ dense in $Z$ and $\pr_{\zeta '}D(\wA^*)$ dense in $Z'$. Let
$T:Z\to Z'$ be the operator it corresponds to by Theorem {\rm \ref{Theorem2.1}}. 

{\rm (i)} When $Z$ and
$Z'$ are mapped to $H^{-\frac12}$ by $\gamma _0$ and Theorem
{\rm 2.1} is carried over to the setting based on the Green's formula
{\rm (\ref{3.12})}, $\wA$ corresponds to a
closed, densely defined operator $L:H^{-\frac12}\to
H^\frac12$ with domain $D(L)=\gamma _0D(\wA)$ such that $\wA$
represents the boundary condition
\begin{equation}
\nu_1 u=C\gamma _0u,\text{ where }C=L+P^0_{\gamma _0,\nu_1 }.\label{3.39}
\end{equation}
Here $L$ is defined from $T$ by {\rm \eqref{3.20}}.

{\rm (ii)} For any $\lambda \in \varrho (A_\gamma )$, $\wA-\lambda $ corresponds
similarly to
\begin{equation}
L^\lambda=L+P^0_{\gamma _0,\nu_1 }-P^\lambda _{\gamma _0,\nu_1 }:H^{-\frac12}\to
H^\frac12,\label{3.40}
 \end{equation}
with domain $D(L^\lambda )=D(L)$. $\wA-\lambda $ is the realisation of
$A-\lambda $ determined by the boundary condition 
\eqref{3.39}, where $C$ may also be written $C=L^\lambda +P^\lambda
_{\gamma _0,\nu_1 }$. Here, when $\wA-\lambda $ corresponds to
$T^\lambda $ by Corollary {\rm \ref{Corollary2.3}}, $L^\lambda $ is
defined from $T^\lambda $ by {\rm \eqref{3.26}}. Moreover, {\rm
  \eqref{3.27a}} holds.

{\rm (iii)} Furthermore, for any $\lambda \in \varrho (A_\gamma )$,
 \begin{align}
\operatorname{ker}(\wA-\lambda )&=K^\lambda _\gamma
\operatorname{ker}L^\lambda,\nonumber\\
\operatorname{ran}(\wA-\lambda )&=\gamma _{Z'_{\bar\lambda }}^*
\operatorname{ran}L^\lambda+\operatorname{ran}(\Ami-\lambda ).
  \label{3.41}
\end{align}
\end{theorem}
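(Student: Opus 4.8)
The proof of Theorem~\ref{Theorem3.4} is essentially a ``transport of structure'' argument: every abstract statement about $T$, $T^\lambda$, and $T+G^\lambda_{V,W}$ established in Section~\ref{sec2} is pushed forward through the homeomorphisms $\gamma_{Z_\lambda}$ and $(\gamma^*_{Z'_{\bar\lambda}})^{-1}$ between the kernel spaces $Z_\lambda$, $Z'_{\bar\lambda}$ and the boundary Sobolev spaces $H^{\mp\frac12}$. So the task is not to prove anything genuinely new, but to verify that the identifications made in the text before the theorem statement indeed give a closed, densely defined $L$ with the asserted domain and boundary condition, and then to translate \eqref{2.35} and \eqref{2.25} into \eqref{3.41}.

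For part~(i), I would argue as follows. By Theorem~\ref{Theorem2.1}, $\wA$ corresponds to a closed densely defined $T:Z\to Z'$ (the density hypotheses $\pr_\zeta D(\wA)$ dense in $Z$ and $\pr_{\zeta'}D(\wA^*)$ dense in $Z'$ are exactly what forces $V=Z$, $W=Z'$). Define $L=(\gamma_{Z'}^*)^{-1}T\gamma_Z^{-1}$ with $D(L)=\gamma_Z D(T)$ as in \eqref{3.20}; since $\gamma_Z$ and $(\gamma_{Z'}^*)^{-1}$ are homeomorphisms onto $H^{-\frac12}$ resp.\ $H^{\frac12}$ (by \eqref{3.17} and its consequence for adjoints), $L$ is automatically closed and densely defined, and $D(L)=\gamma_Z\pr_\zeta D(\wA)=\gamma_0 D(\wA)$ because $\gamma_0$ vanishes on $D(A_\gamma)$. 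The boundary-condition statement \eqref{3.39} is then exactly the computation already carried out in \eqref{3.17a}--\eqref{3.22}: rewrite the defining identity $(T\pr_\zeta u,w)=(Au,w)$ using the generalised Green formula \eqref{3.13} on the right and the definition of $L$ on the left, test against all $\psi=\gamma_0 w\in H^{-\frac12}$, and read off $\nu_1 u=(L+P^0_{\gamma_0,\nu_1})\gamma_0 u$, i.e.\ $C=L+P^0_{\gamma_0,\nu_1}$.

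For part~(ii), the same computation with $A-\lambda$, $T^\lambda$, $Z_\lambda$, $Z'_{\bar\lambda}$ in place of their $\lambda=0$ counterparts (using \eqref{3.13} for $A-\lambda$ and the Poisson operator $K^\lambda_\gamma$) gives the boundary condition \eqref{3.27} with $L^\lambda=(\gamma^*_{Z'_{\bar\lambda}})^{-1}T^\lambda\gamma_{Z_\lambda}^{-1}$, and the identity $D(L^\lambda)=D(L)$ follows from $D(T^\lambda)=E^\lambda D(T)$ and $\gamma_0 E^\lambda=\gamma_0$ on $Z$ (cf.\ \eqref{3.24a}). The two expressions for $L^\lambda$ — formula \eqref{3.27a} coming from Corollary~\ref{Corollary2.5} via the commuting diagram, and formula \eqref{3.40} coming from the fact that $D(\wA-\lambda)=D(\wA)$ so that \eqref{3.27} and \eqref{3.22} describe the same boundary condition — must agree; this is the content of Remark~\ref{Remark3.2a}, and I would simply invoke it (the equality $(\gamma^*_{Z'})^{-1}G^\lambda\gamma_Z^{-1}=P^0_{\gamma_0,\nu_1}-P^\lambda_{\gamma_0,\nu_1}$, checked on the case $L=0$). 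Finally, for part~(iii), I would take \eqref{2.35} (Corollary~\ref{Corollary2.11}), namely $\ker(\wA-\lambda)=E^\lambda_Z\ker(B+G^\lambda)$ and $\operatorname{ran}(\wA-\lambda)=(F^{\prime\bar\lambda}_{Z'})^*\operatorname{ran}(B+G^\lambda)+\operatorname{ran}(\Ami-\lambda)$ with $B=T$, and apply the homeomorphisms: under $\gamma_{Z_\lambda}$ the space $\ker(T+G^\lambda)\subset Z$ maps to $\ker L^\lambda$ (since $L^\lambda=(\gamma^*_{Z'})^{-1}(T+G^\lambda)\gamma_Z^{-1}$ by \eqref{3.27a}), and composing with $E^\lambda_Z$ and using $\gamma_{Z_\lambda}E^\lambda_Z=\gamma_Z$ we get $\ker(\wA-\lambda)=\gamma_{Z_\lambda}^{-1}\ker L^\lambda=K^\lambda_\gamma\ker L^\lambda$ (recalling $\gamma_{Z_\lambda}^{-1}$ is $K^\lambda_\gamma$ with restricted range); dually, $(F^{\prime\bar\lambda}_{Z'})^*\operatorname{ran}(T+G^\lambda)=\gamma^*_{Z'_{\bar\lambda}}\operatorname{ran}L^\lambda$, which gives the second line of \eqref{3.41}.

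\textbf{Main obstacle.}
Nothing here is deep, but the one point requiring genuine care is the bookkeeping of the three homeomorphisms $\gamma_{Z_\lambda}$, $\gamma^*_{Z'_{\bar\lambda}}$, $E^\lambda_Z$ and their interplay via \eqref{3.24a} — in particular getting the adjoint/inverse placements right so that the diagram in the text preceding \eqref{3.27a} genuinely commutes, and so that the range formula in \eqref{3.41} comes out with $\gamma^*_{Z'_{\bar\lambda}}$ (not its inverse) in front of $\operatorname{ran}L^\lambda$. The second, more technical, point is ensuring that all these maps are honest homeomorphisms onto the stated $L_2$-based Sobolev spaces $H^{\mp\frac12}(\Sigma)$ — this rests on the elliptic regularity and trace results of \cite{LM68} and \cite{G68} recalled around \eqref{3.6}--\eqref{3.17}, which I would cite rather than reprove. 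Everything else is the routine transcription described above.
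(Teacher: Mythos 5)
Your proposal is correct and matches the paper's approach: parts (i) and (ii) are exactly the computations (\ref{3.17a})--(\ref{3.28}) carried out in the text preceding the theorem, and for (iii) the paper applies (\ref{2.14}) while you route through the equivalent Corollary \ref{Corollary2.11}, which differs only by the bookkeeping step $\gamma_{Z_\lambda}E^\lambda_Z = \gamma_Z$ from (\ref{3.24a}) that you correctly identify as the key identity.
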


\begin{proof} All has been accounted for above except point (iii), but
this follows immediately from \eqref{2.14}. 
\end{proof}

\subsubsection{The $M$-function}\label{sec:mfn}
We  define 
an $M$-function in this 
representation, by use of Lemma 2.8. We have from (\ref{2.30c}) for $\lambda \in\varrho (\wA)$:
\begin{equation}
M_{\wA}(\lambda )=\pr_\zeta \bigl((I-(\wA-\lambda )^{-1}(\Ama-\lambda
)\bigr)A_\gamma  ^{-1}\ij_{Z'\to H}: Z'\to Z.\label{3.29}
\end{equation}
We know from Theorem \ref{Theorem2.9} that the $M$-function should coincide with
minus the
inverse of the operator induced from $T^\lambda $ in (\ref{2.32}), when $\lambda \in
\varrho (\wA)\cap \varrho (A_\gamma )$. So, applying the trace maps in
(\ref{3.17}) in a similar way as we did for $T$, we get
\begin{align}
M_L(\lambda )
&=\gamma _Z M_{\wA}(\lambda )\gamma _{Z'}^*=\gamma _Z \pr_\zeta \bigl((I-(\wA-\lambda )^{-1}(\Ama-\lambda
)\bigr)A_\gamma  ^{-1}\ij_{Z'\to H}\gamma _{Z'}^*\nonumber\\
& =\gamma _0\bigl(I-(\wA-\lambda )^{-1}(\Ama-\lambda
)\bigr)A_\gamma  ^{-1}\ij_{Z'\to H}\gamma _{Z'}^*.\label{3.30}
\end{align}
Here, when $\lambda \in \varrho (\wA)\cap\varrho (A_\gamma
)$, 
$-M_L(\lambda )$ is the inverse of the operator translated from
$T+G^\lambda $, namely, in view of (\ref{3.27a})--(\ref{3.28}),
\begin{equation}
M_L(\lambda )=-(L+ P^0 _{\gamma _0,\nu_1}-P^\lambda _{\gamma _0,\nu_1})^{-1}=-(L^\lambda )^{-1},
\label{3.31}
 \end{equation}
bounded from $H^\frac12$ to $ H^{-\frac12}$,
and then it has  range $D(L)$. Moreover, it has the 
extension
to $\lambda \in \varrho (\wA)$ given in (\ref{3.30}), a
holomorphic family of bounded operators from $H^\frac12$ to $H^{-\frac12}$.

Note that when 
$\lambda \in\varrho (\wA)\cap\varrho
(A_\gamma )$, 
$L^\lambda $ is
surjective onto $H^\frac12$. 
These considerations lead to:

\begin{theorem}\label{Theorem3.4a} For the realisation considered in
  Theorem {\rm \ref{Theorem3.4}}, there is an
 $M$-function defined by 
\begin{equation}
M_L(\lambda ) =\gamma _0\bigl(I-(\wA-\lambda )^{-1}(\Ama-\lambda
)\bigr)A_\gamma  ^{-1}\ij_{Z'\to H}\gamma _{Z'}^*,\label{3.42}
\end{equation}
a holomorphic family of bounded operators from $H^{\frac12}$ to $H^{-\frac12}$.
For $\lambda \in \varrho (\wA)\cap \varrho (A_\gamma )$, it satisfies
\begin{equation}
M_L(\lambda )=-(L+ P^0 _{\gamma _0,\gamma _1}-P^\lambda _{\gamma
_0,\gamma _1})^{-1}=-(L^\lambda )^{-1}.
\label{3.43}
\end{equation}
There is the following Kre\u\i{}n resolvent formula, valid for all $\lambda \in \varrho (\wA)\cap\varrho (A_\gamma )$:
\begin{align}
(\wA-\lambda )^{-1}&=(A_\gamma -\lambda )^{-1}-\ij_{Z_\lambda \to H}\gamma _{Z_\lambda }^{-1} 
M_L(\lambda )(\gamma _{Z'_{\bar\lambda }}^*)^{-1}\pr_{Z'_{\bar\lambda
  }}\label{3.44}\\
&=(A_\gamma -\lambda )^{-1}-K^\lambda _\gamma  
M_L(\lambda )(K^{\prime\bar\lambda}_\gamma )^*.\nonumber
\end{align}
\end{theorem}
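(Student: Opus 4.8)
The plan is to assemble Theorem \ref{Theorem3.4a} essentially as a ``translation'' of results already in hand: Theorem \ref{Theorem2.9} (or rather its general-extension version, Theorem \ref{Theorem 2.12}), combined with the homeomorphisms $\gamma_{Z_\lambda}$, $\gamma_{Z'_{\bar\lambda}}$ of \eqref{3.17} and the identification of $L^\lambda$ with $T+G^\lambda$ worked out in Theorem \ref{Theorem3.4}(ii). First I would note that the abstract $M$-function $M_{\wA}(\lambda)$ from Theorem \ref{Theorem 2.12}, specialised to the present case with $A_\beta=A_\gamma$, $V=Z$, $W=Z'$, is already given by \eqref{3.29}; the definition \eqref{3.42} of $M_L(\lambda)$ is then nothing but $M_L(\lambda)=\gamma_Z M_{\wA}(\lambda)\gamma_{Z'}^*$, which is the content of the display \eqref{3.30}. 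Since $\gamma_Z:Z\simto H^{-1/2}$ and $\gamma_{Z'}^*:H^{1/2}\simto Z'$ are homeomorphisms and $M_{\wA}(\lambda):Z'\to Z$ is bounded and holomorphic on $\varrho(\wA)$ by Theorem \ref{Theorem 2.12}, the family $M_L(\lambda):H^{1/2}\to H^{-1/2}$ inherits boundedness and holomorphy on $\varrho(\wA)$; that disposes of the first assertion.

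Next I would prove \eqref{3.43}. For $\lambda\in\varrho(\wA)\cap\varrho(A_\gamma)$, Theorem \ref{Theorem 2.12} gives that $-M_{\wA}(\lambda)$ is the inverse of $T+G^\lambda_{Z,Z'}$ (here $V=Z$, $W=Z'$ so $G^\lambda_{V,W}=G^\lambda$). Conjugating by the homeomorphisms and invoking the commuting diagram and formula \eqref{3.27a} of Theorem \ref{Theorem3.4}(ii), which says $L^\lambda=(\gamma_{Z'}^*)^{-1}(T+G^\lambda)\gamma_Z^{-1}$, one gets
\[
-M_L(\lambda)=\gamma_Z(T+G^\lambda)^{-1}\gamma_{Z'}^*=\bigl((\gamma_{Z'}^*)^{-1}(T+G^\lambda)\gamma_Z^{-1}\bigr)^{-1}=(L^\lambda)^{-1},
\]
and then \eqref{3.28} rewrites $L^\lambda=L+P^0_{\gamma_0,\nu_1}-P^\lambda_{\gamma_0,\nu_1}$, giving \eqref{3.43}. (The statement \eqref{3.43} as printed uses $\gamma_1$ in place of $\nu_1$; this is harmless since $\nu_1=s_0\gamma_1$ only rescales, or one simply keeps the $\nu_1$-notation consistently.) The fact that $M_L(\lambda)$ has range $D(L)$ on this set, and that $L^\lambda$ is then surjective onto $H^{1/2}$, follows because $(L^\lambda)^{-1}=-M_L(\lambda)$ is everywhere defined on $H^{1/2}$ with range $D(L^\lambda)=D(L)$ (by the last display before Theorem \ref{Theorem3.4a} and by \eqref{3.27}).

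Finally I would derive the Kre\u\i n resolvent formula \eqref{3.44}. Here the starting point is the general resolvent formula \eqref{2.37} of Theorem \ref{Theorem 2.12} with $A_\beta=A_\gamma$, $V=Z$, $W=Z'$, namely
\[
(\wA-\lambda)^{-1}=(A_\gamma-\lambda)^{-1}-\ij_{Z_\lambda\to H}E^\lambda_Z M_{\wA}(\lambda)(E^{\prime\bar\lambda}_{Z'})^*\pr_{Z'_{\bar\lambda}},
\]
valid for $\lambda\in\varrho(\wA)\cap\varrho(A_\gamma)$. Into this I substitute $M_{\wA}(\lambda)=\gamma_Z^{-1}M_L(\lambda)(\gamma_{Z'}^*)^{-1}$ and use \eqref{3.24a}, which gives $\gamma_{Z_\lambda}=\gamma_Z F^\lambda_Z$, i.e. $\gamma_{Z_\lambda}^{-1}=E^\lambda_Z\gamma_Z^{-1}$, and likewise $(\gamma_{Z'_{\bar\lambda}}^*)^{-1}=(\gamma_{Z'}^*)^{-1}(F^{\prime\bar\lambda}_{Z'})^*$ — so that $E^\lambda_Z\gamma_Z^{-1}=\gamma_{Z_\lambda}^{-1}$ and $(\gamma_{Z'}^*)^{-1}(E^{\prime\bar\lambda}_{Z'})^*=(\gamma_{Z'_{\bar\lambda}}^*)^{-1}$. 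These collapse the composite $\ij_{Z_\lambda\to H}E^\lambda_Z M_{\wA}(\lambda)(E^{\prime\bar\lambda}_{Z'})^*\pr_{Z'_{\bar\lambda}}$ into $\ij_{Z_\lambda\to H}\gamma_{Z_\lambda}^{-1}M_L(\lambda)(\gamma_{Z'_{\bar\lambda}}^*)^{-1}\pr_{Z'_{\bar\lambda}}$, which is the first line of \eqref{3.44}; and since $\gamma_{Z_\lambda}^{-1}=K^\lambda_\gamma$ (with range restricted to $Z_\lambda$, as noted after \eqref{3.17}) and $(\gamma_{Z'_{\bar\lambda}}^*)^{-1}\pr_{Z'_{\bar\lambda}}=(K^{\prime\bar\lambda}_\gamma)^*$, the second line follows. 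I expect the only real care needed — the ``main obstacle'' — is bookkeeping with the several mutually inverse homeomorphisms $E^\lambda_Z,F^\lambda_Z,\gamma_{Z_\lambda},\gamma_{Z'_{\bar\lambda}}$ and their adjoints, making sure each identity in \eqref{3.24a} is used in the right direction so that the restricted ranges and the projection $\pr_{Z'_{\bar\lambda}}$ match up; the analytic content is entirely carried by the earlier theorems.
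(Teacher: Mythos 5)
Your proof is correct and takes essentially the same route as the paper: define $M_L(\lambda)$ by conjugating the abstract $M_{\wA}(\lambda)$ with the homeomorphisms $\gamma_Z$ and $\gamma_{Z'}^*$, deduce \eqref{3.43} from \eqref{3.27a} together with the abstract inversion statement of Theorem~\ref{Theorem 2.12} (equivalently Theorem~\ref{Theorem2.9}, since here $V=Z$, $W=Z'$), and obtain the Kre\u\i{}n formula by inserting $M_{\wA}(\lambda)=\gamma_Z^{-1}M_L(\lambda)(\gamma_{Z'}^*)^{-1}$ into \eqref{2.37} and contracting via \eqref{3.24a} and the identifications $\ij_{Z_\lambda\to H}\gamma_{Z_\lambda}^{-1}=K^\lambda_\gamma$, $(\gamma_{Z'_{\bar\lambda}}^*)^{-1}\pr_{Z'_{\bar\lambda}}=(K^{\prime\bar\lambda}_\gamma)^*$. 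The one slip is the intermediate line $(\gamma_{Z'_{\bar\lambda}}^*)^{-1}=(\gamma_{Z'}^*)^{-1}(F^{\prime\bar\lambda}_{Z'})^*$, where the factor should be $(E^{\prime\bar\lambda}_{Z'})^*$; you in fact write the correct identity $(\gamma_{Z'}^*)^{-1}(E^{\prime\bar\lambda}_{Z'})^*=(\gamma_{Z'_{\bar\lambda}}^*)^{-1}$ in the following clause, so the argument is unaffected.
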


\begin{proof} The definition of $M_L$ is accounted for above. The
first line in the  Kre\u\i{}n formula follows 
from 
(\ref{2.37}) in the case $V=Z$, $W=Z'$, by the calculation
$$
E^\lambda _Z
M_{\wA}(\lambda ) (E^{\prime \bar\lambda }_{Z'})^*=E^\lambda _Z
\gamma _Z^{-1}M_{L}(\lambda )
(\gamma _{Z'}^*)^{-1} (E^{\prime \bar\lambda }_{Z'})^*=\gamma _{Z_\lambda }^{-1}M_L(\lambda )(\gamma _{Z'_{\bar\lambda }}^*)^{-1},
$$
using (\ref{3.30}) and \eqref{3.24a}.
The second line follows since
$\ij_{Z_\lambda \to H}\gamma _{Z_\lambda }^{-1} =K^\lambda _\gamma
:H^{-\frac12}\to H$, $(\gamma _{Z'_{\bar\lambda }}^*)^{-1}\pr_{Z'_{\bar\lambda
  }}=(\ij_{Z'_{\bar\lambda }\to H}\gamma _{Z'_{\bar\lambda
  }}^{-1})^*=(K^{\prime\bar\lambda}_\gamma )^*:H\to H^{\frac12}$
(recall that $H=L_2(\Omega )$).
\end{proof}

Note that with the notation \eqref{3.10}, $L^\lambda =-P^\lambda
_{\gamma _0 ,\nu _1-C\gamma _0 }$ and  $M_L(\lambda )=P^\lambda _{\nu
  _1-C\gamma _0 ,\gamma _0}$, cf.\ \eqref{3.24}.

\subsubsection{Elliptic boundary conditions}\label{sec:ell}
Further information can be obtained in elliptic cases.
Using  the Sobolev space mapping properties of $\gamma _Z$ and
its inverse, one finds since $D(A_\gamma )\subset H^2(\Omega )$ that $D(\wA)\subset H^2(\Omega )$ if and only if
$D(L)\subset H^{\frac32}$. If $L$ is given as an arbitrary $\psi $do
of order 1, it is not in general bounded from $H^{-\frac12}$ to
$H^\frac12$, but has a subset as domain.

{\it Ellipticity} of the boundary value problem defined from
\eqref{3.23}, \eqref{3.24} (the
Shapiro-Lopatinski\u\i{}
condition)  holds precisely when $L$ 
acts like an {\it elliptic} $\psi $do of order 1. Then 
$\operatorname{ran}L\subset H^\frac12$ implies 
$D(L)\subset H^\frac32$, and the graph-norm on $D(L)$ is
equivalent with the $H^\frac32$-norm. Let $\varrho (\wA)\cap\varrho
(A_\gamma )\ne\emptyset$ (a reasonable hypothesis in our discussion)
and let $\lambda _0\in\varrho (\wA)\cap\varrho
(A_\gamma  )$. Then $L^{\lambda _0}$ has kernel and cokernel $\{0\}$
(since $T^{\lambda _0}$ has so, by Theorem 2.1) and is likewise
elliptic of order 1, so the inverse $-M_L(\lambda _0)$ is an elliptic
$\psi $do of order $-1$; it is defined on
all of $H^{\frac12}$. It maps $H^{\frac12}$ onto $H^\frac32$ (since the range
of the operator from any $H^s$ to $H^{s+1}$ has codimension 0). It
follows that
\begin{equation}
D(L)=D(L^{\lambda _0})= H^\frac32.
\label{3.32}
 \end{equation}
Then moreover, $D(\wA)\subset H^2(\Omega )$
(and $D(\wA)$ is the
largest subset of $D(\Ama)$ where (\ref{3.23}) holds).

In this case, the adjoint of $(L^{\lambda _0})^{-1}$ (as a bounded
operator from $H^{\frac12}$ to $H^{-\frac12}$) is $((L^{\lambda
  _0})^*)^{-1}$, so the adjoint of $L$ (as an unbounded closed densely defined
operator from $H^{-\frac12}$ to $H^{\frac12}$) is $L^*$ with domain
$H^\frac32$. Then $\wA^*$ is the realisation of $A'$ defined
by the elliptic boundary condition 
\begin{equation}
\nu_1'u=(L^*+P^{\prime 0}_{\gamma _0,\nu_1 '})\gamma _0u,\label{3.33}
 \end{equation}
and $D(\wA^*)\subset H^2(\Omega )$. Note that if $A=A'$ and $L$ is elliptic of order 1 and symmetric, $\wA$
is selfadjoint. 

Before including these observations in a theorem we shall show that
also $\pr_{Z_\lambda }$,
$\pr_{Z'_{\bar\lambda }}$ and their adjoints $\ij_{Z_\lambda \to H}$,
$\ij_{Z'_{\bar\lambda }\to H}$, belong to the $\psi $dbo calculus;
this will allow a discussion of $M_L(\lambda )$ for $\lambda \in
\varrho (\wA)\setminus \varrho (A_\gamma )$.

\begin{prop}\label{Proposition3.3} Let $\lambda \in\varrho (A_\gamma )$. Then
$\pr_{Z'_{\bar\lambda }}$ acts as the singular Green operator
\begin{equation}
\pr_{Z'_{\bar\lambda }}=I-(\Ama -\lambda )R((A'-\bar\lambda
)(A-\lambda ),\gamma _0,\gamma_1 )(\Ama'-\bar\lambda ),\label{3.34}
 \end{equation}
where $R((A'-\bar\lambda )(A-\lambda ),\gamma _0,\gamma_1 ):g\mapsto u$ is the solution
operator for the problem
\begin{equation}
(A'-\bar\lambda )(A-\lambda )u=g,\quad \gamma _0u=\gamma_1 u=0.\label{3.35}
 \end{equation}

Similarly, 
\begin{equation}
\pr_{Z_{\lambda }}=I-(\Ama '-\bar\lambda )R((A-\lambda
)(A'-\bar\lambda ),\gamma _0,\gamma_1 )(\Ama-\lambda ).\label{3.36}
 \end{equation}

Moreover, $\ij_{Z_\lambda \to H}$ acts like the adjoint of the operator in {\rm (\ref{3.36})}
in the $\psi $dbo calculus, and 
$\ij_{Z'_{\bar\lambda }\to H}$ acts like the adjoint of the operator
in {\rm \eqref{3.34}}.
\end{prop}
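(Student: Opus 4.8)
The plan is to verify directly that the operator
$\Pi:=I-(\Ama-\lambda)R((A'-\bar\lambda)(A-\lambda),\gamma _0,\gamma _1)(\Ama'-\bar\lambda)$
on the right of (\ref{3.34}) is the orthogonal projection onto $Z'_{\bar\lambda}$. The clue is that, since $\Ami^*=\Ama'$ by (\ref{3.2}), one has $Z'_{\bar\lambda}=\operatorname{ker}(\Ama'-\bar\lambda)=\operatorname{ker}(\Ami-\lambda)^*=(\operatorname{ran}(\Ami-\lambda))^\perp$; in particular $\operatorname{ran}(\Ami-\lambda)\perp Z'_{\bar\lambda}$. First I would settle existence of $R(\cdots)$: the fourth-order operator $(A'-\bar\lambda)(A-\lambda)=(A-\lambda)^\dagger(A-\lambda)$ is a formally selfadjoint elliptic operator, and $\gamma _0u=\gamma _1u=0$ is an elliptic (and selfadjoint) boundary condition for it, so the Dirichlet problem (\ref{3.35}) has index zero; its kernel is trivial, because a solution $u$ of the homogeneous problem lies in $H^2_0(\Omega)=D(\Ami)$ and then $(\Ami-\lambda)u\in\operatorname{ran}(\Ami-\lambda)\cap Z'_{\bar\lambda}=\{0\}$, forcing $u=0$ ($\Ami-\lambda$ being injective, as $\Ami\subset A_\gamma$ and $\lambda\in\varrho(A_\gamma)$). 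Hence $R((A'-\bar\lambda)(A-\lambda),\gamma _0,\gamma _1)$ exists as a $\psi $dbo of order $-4$, mapping $H^{-2}(\Omega)$ into $H^2(\Omega)$; composing it with the second-order differential operators $\Ama'-\bar\lambda$ and $\Ama-\lambda$ (total order $0$) shows $\Pi$ is bounded on $L_2(\Omega)$.

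Then I would check the three properties pinning $\Pi$ down. \textbf{(a)} $\Pi v=v$ for $v\in Z'_{\bar\lambda}$, since $(\Ama'-\bar\lambda)v=0$ gives $R(\cdots)(\Ama'-\bar\lambda)v=0$. \textbf{(b)} $\operatorname{ran}\Pi\subset Z'_{\bar\lambda}$: for $f\in L_2(\Omega)$ set $u:=R(\cdots)(\Ama'-\bar\lambda)f\in H^2(\Omega)$; then $(A'-\bar\lambda)(A-\lambda)u=(A'-\bar\lambda)f$ in ${\cal D}'(\Omega)$, so $(A'-\bar\lambda)\Pi f=(A'-\bar\lambda)(f-(A-\lambda)u)=0$, whence $\Pi f\in D(\Ama')$ with $(\Ama'-\bar\lambda)\Pi f=0$, i.e.\ $\Pi f\in Z'_{\bar\lambda}$. \textbf{(c)} $\operatorname{ran}(I-\Pi)\subset\operatorname{ran}(\Ami-\lambda)$: the $u$ of (b) has $\gamma _0u=\gamma _1u=0$, so $u\in H^2_0(\Omega)=D(\Ami)$ and $(I-\Pi)f=(\Ami-\lambda)u$. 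From (a)+(b), $\Pi^2=\Pi$ with $\operatorname{ran}\Pi=Z'_{\bar\lambda}$; since $\Pi$ is a bounded idempotent, $L_2(\Omega)=Z'_{\bar\lambda}\oplus\operatorname{ker}\Pi$, while (c) with the first paragraph gives $\operatorname{ker}\Pi=\operatorname{ran}(I-\Pi)\subset(Z'_{\bar\lambda})^\perp$; comparing with $L_2(\Omega)=Z'_{\bar\lambda}\oplus(Z'_{\bar\lambda})^\perp$ forces $\operatorname{ker}\Pi=(Z'_{\bar\lambda})^\perp$, so $\Pi=\pr_{Z'_{\bar\lambda}}$. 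This proves (\ref{3.34}); (\ref{3.36}) follows by the symmetric argument, interchanging $A,\lambda$ with $A',\bar\lambda$, using $Z_\lambda=\operatorname{ker}(\Ama-\lambda)=(\operatorname{ran}(\Ami'-\bar\lambda))^\perp$ and $D(\Ami')=H^2_0(\Omega)$.

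It remains to place these operators in the $\psi $dbo calculus. Writing $R(\cdots)=Q_++G_D$ with $Q$ a parametrix on $\widetilde\Omega$ of $(A'-\bar\lambda)(A-\lambda)$ and $G_D$ a singular Green operator, the composition $(\Ama-\lambda)R(\cdots)(\Ama'-\bar\lambda)$ lies in the calculus, and its interior ($\psi $do) part is $(A-\lambda)Q(A'-\bar\lambda)$; this equals $I$ modulo a smoothing operator, because $Q(A'-\bar\lambda)(A-\lambda)\equiv I$ (as $Q$ is a parametrix of the \emph{product}) yields $((A-\lambda)Q(A'-\bar\lambda)-I)(A-\lambda)\equiv 0$, from which the ellipticity of $A-\lambda$ cancels the right factor. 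Hence $\pr_{Z'_{\bar\lambda}}=I-(\Ama-\lambda)R(\cdots)(\Ama'-\bar\lambda)$ carries no $\psi $do part, i.e.\ is a singular Green operator (of order and class $0$), and likewise for (\ref{3.36}). Finally, for $v\in Z_\lambda$, $h\in H$ one has $(\ij_{Z_\lambda\to H}v,h)=(v,\pr_{Z_\lambda}h)$, so $(\ij_{Z_\lambda\to H})^*=\pr_{Z_\lambda}$, which is the operator in (\ref{3.36}); thus $\ij_{Z_\lambda\to H}$ is the $L_2$-adjoint of that operator, and similarly $\ij_{Z'_{\bar\lambda}\to H}$ is the adjoint of the operator in (\ref{3.34}); since the $\psi $dbo calculus is closed under adjoints — the adjoint of a singular Green operator being again a singular Green operator — both inclusions belong to the calculus. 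I expect the one genuinely delicate step to be this last cancellation of the interior parametrix parts, which promotes $\pr_{Z'_{\bar\lambda}}$ from a general order-$0$ operator of the calculus to an honest singular Green operator; the projection identities (\ref{3.34})–(\ref{3.36}) themselves are short operator theory, and the statements about the inclusions are then immediate.
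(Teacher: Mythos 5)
Your proof is correct and reaches the same identification as the paper, but it is organised differently. The paper starts from the orthogonal decomposition $H=\operatorname{ran}(\Ami-\lambda)\oplus Z'_{\bar\lambda}$, writes $f=f_1+f_2$, identifies $v=(A_\gamma-\lambda)^{-1}f_1\in H^2_0(\Omega)$ as the unique solution of the fourth-order Dirichlet problem with data $(A'-\bar\lambda)f$ (uniqueness argued via the positive Friedrichs extension of the formally selfadjoint product), and then reads off $f_2$ from the formula. You instead start from the candidate operator $\Pi$ in (\ref{3.34}) and verify the defining properties of the orthogonal projection: range $Z'_{\bar\lambda}$, idempotency, and kernel $\subset(Z'_{\bar\lambda})^\perp$. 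This is the same key observation ($R(\cdots)(A'-\bar\lambda)f\in H^2_0(\Omega)=D(\Ami)$, so $(I-\Pi)f\in\operatorname{ran}(\Ami-\lambda)$) packaged as a characterisation rather than a construction; it is clean and arguably more self-contained. You also justify well-posedness of (\ref{3.35}) by an index-zero-plus-injectivity argument, where the paper invokes Friedrichs-extension positivity; both are valid, and yours has the advantage of reusing the same algebraic fact $\operatorname{ran}(\Ami-\lambda)\cap Z'_{\bar\lambda}=\{0\}$. Your parametrix cancellation for the interior $\psi$do part is somewhat more detailed than the paper's one-line remark and is sound.

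The one place you gloss over a genuine technical point is the passage from ``bounded on $L_2$'' to ``has an adjoint within the $\psi$dbo calculus.'' The calculus is \emph{not} closed under adjoints in general: a singular Green (or trace) operator of positive class has no adjoint in the calculus, because its $L_2$-adjoint would have to ``produce'' boundary traces. You assert that $\Pi$ is ``of order and class $0$,'' but the point needing an argument is precisely the class: the right factor $\Ama'-\bar\lambda$ is a differential operator of order $2$ and would naively give a class-$2$ composition, which would not have an adjoint in the calculus. The saving fact, noted in the paper with a reference to \cite{G90}, is that $R((A'-\bar\lambda)(A-\lambda),\gamma_0,\gamma_1)$ is of class $-2$ (its range lies in $H^2_0$, so two normal derivatives can still be absorbed), and hence the full composition has class $2+(-2)=0$. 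Once that is in place, your identification $(\ij_{Z_\lambda\to H})^*=\pr_{Z_\lambda}$ and the conclusion that the injections lie in the calculus are correct.
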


\begin{proof} When $f\in H$, $\pr_{Z'_{\bar\lambda }}f$ is the second component of
$f$ in the decomposition 
$$
H=\operatorname{ran}(\Ami-\lambda )\oplus Z'_{\bar\lambda }.
$$
Write $f=f_1+f_2$ according to this decomposition, and note that
\begin{equation}
(A'-\bar\lambda )f=(A'-\bar\lambda )f_1. \label{3.37}
\end{equation}
Observe moreover, that $v=(A_\gamma
-\lambda )^{-1}f_1$ is the unique element in $D(\Ami)=H^2_0(\Omega )$
such that $(A-\lambda )v=f_1$.  In view of (\ref{3.37}), $v$ moreover solves
\begin{equation}
(A'-\bar\lambda )(A-\lambda )v=(A'-\bar\lambda )f,\quad \gamma _0v=\gamma _1v=0,\label{3.38}
\end{equation}
and this solution is unique since $(A'-\bar\lambda )(A-\lambda )$ is
formally selfadjoint strongly elliptic with positive minimal
realisation, hence has a positive Friedrichs extension, representing its
Dirichlet problem (\ref{3.35}). Thus $v$ is uniquely determined as
$$
v=R((A'-\bar\lambda )(A-\lambda ),\gamma _0,\gamma_1 )(A'-\bar\lambda )f,
$$
and  $f_2$ is given by the formula (\ref{3.34}). It should be noted that the
operator in (\ref{3.34}) has a good meaning on $L_2(\Omega )$ in the $\psi
$dbo calculus;
first $A'-\lambda $ maps $L_2(\Omega )$ continuously into $H^{-2}(\Omega )$, then
$R((A'-\bar\lambda )(A-\lambda ),\gamma _0,\gamma _1)$ maps
$H^{-2}(\Omega )$ homeomorphically onto $H^2_0(\Omega )$, as is known
for Dirichlet problems for positive operators, and finally $A-\lambda
$ maps $H^2_0(\Omega )$ continuously into $L_2(\Omega )$. \eqref{3.34}
defines  a
singular Green operator since the $\psi $do part of the second
term cancels out with $I$.

The formula (\ref{3.36}) follows by interchanging the roles of $A-\lambda $
and $A'-\bar\lambda $.

Finally, as a technical point taken care of in \cite{G90}, the operator in (\ref{3.34}), although the factor to the right is
of order 2, is of class 0 since $R((A'-\bar\lambda )(A-\lambda
),\gamma _0,\gamma_1 )$ is of class $-2$. Then it
does have an adjoint in the $\psi $dbo calculus, so the assertion
follows since the adjoint of $\pr_{Z_\lambda }:H\to Z_\lambda $ is $\ij_{Z_\lambda \to H}$.
\end{proof}

\begin{theorem}\label{Theorem3.4b} 
For the operators considered in Theorems {\rm \ref{Theorem3.4}} and
{\rm \ref{Theorem3.4a}}, one has:

{\rm (i)} When $L$ acts like an elliptic $\psi $do of order $1$, then $D(L)\subset
H^\frac32$ and  $D(\wA)\subset
H^2(\Omega )$,
and  when 
$\lambda \in \varrho(\wA)\cap \varrho (A_\gamma )$, 
all the operators entering in the formulas belong to 
the $\psi $dbo calculus, and $M_L(\lambda )$ is elliptic of order $-1$.
 Moreover, if  $\varrho
(\wA)\cap \varrho (A_\gamma )\ne\emptyset$,
$D(L)=
H^\frac32$.

{\rm (ii)} When the statements in {\rm (i)} hold, we have moreover that
$\wA^*$ is the realisation of $A'$ determined by the elliptic boundary
condition {\rm \eqref{3.33}}, and $D(L^*)=H^\frac32$, $D(\wA^*)\subset
H^2(\Omega )$. In particular, if $A=A'$ and $L$ is symmetric and elliptic of order $1$, $\wA$
is selfadjoint with $D(\wA)\subset H^2(\Omega )$.

{\rm (iii)} 
When $L$ is elliptic of order $1$ with $D(L)=
H^\frac32$, $M_L(\lambda )$ is
an elliptic $\psi $do of order $-1$ for all $\lambda \in \varrho (\wA)$.

\end{theorem}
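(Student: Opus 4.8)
The plan is to dispose of parts (i) and (ii), which are essentially contained in the discussion of Subsection \ref{sec:ell} preceding the theorem, and then to concentrate on (iii); the only genuinely new point there is the passage from $\lambda\in\varrho(\wA)\cap\varrho(A_\gamma)$ to $\lambda\in\varrho(\wA)\setminus\varrho(A_\gamma)$.

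For (i) and (ii) I would argue as follows. If $L$ acts like an elliptic $\psi$do of order $1$, then $\operatorname{ran}L\subset H^{\frac12}$ together with elliptic regularity on $\Sigma$ forces $D(L)\subset H^{\frac32}$ with graph norm equivalent to the $H^{\frac32}$-norm, and the mapping properties of $K_\gamma=\gamma_Z^{-1}$ then yield $D(\wA)\subset H^2(\Omega)$. For $\lambda\in\varrho(\wA)\cap\varrho(A_\gamma)$, Theorem \ref{Theorem3.4a} gives $M_L(\lambda)=-(L^\lambda)^{-1}$ with $L^\lambda=L+P^0_{\gamma_0,\nu_1}-P^\lambda_{\gamma_0,\nu_1}$; by Remark \ref{Remark3.2a} the difference $P^0_{\gamma_0,\nu_1}-P^\lambda_{\gamma_0,\nu_1}$ is of order $-1$, so $L^\lambda$ is elliptic of order $1$ with the same principal symbol as $L$, and it has kernel and cokernel $\{0\}$ by Corollary \ref{Corollary2.3}; hence $-M_L(\lambda)=(L^\lambda)^{-1}$ is an elliptic $\psi$do of order $-1$, and by Proposition \ref{Proposition3.3} together with the classical theory of elliptic boundary problems all operators in \eqref{3.42}--\eqref{3.44} lie in the $\psi$dbo calculus. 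Since such an operator maps $H^{\frac12}$ onto $H^{\frac32}$, we obtain $D(L)=D(L^\lambda)=H^{\frac32}$. Once $D(L)=H^{\frac32}$, the $L_2(\Sigma)$-adjoint of $(L^{\lambda_0})^{-1}:H^{\frac12}\to H^{-\frac12}$ is $\bigl((L^{\lambda_0})^*\bigr)^{-1}$, so $L^*$ has domain $H^{\frac32}$ and is again elliptic of order $1$; Theorem \ref{Theorem2.1} then identifies $\wA^*$ with the $A'$-realisation \eqref{3.33}, an elliptic boundary condition, so $D(\wA^*)\subset H^2(\Omega)$, and the selfadjoint case follows from the symmetric-case part of Theorem \ref{Theorem2.1}.

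For (iii) it remains to treat a point $\lambda\in\varrho(\wA)$ which is a Dirichlet eigenvalue; such points are isolated, since $A_\gamma$ has discrete spectrum, and there neither $L^\lambda$ nor $-(L^\lambda)^{-1}$ is available. I would instead work directly from \eqref{3.42},
\[
M_L(\lambda)=\gamma_0\bigl(I-(\wA-\lambda)^{-1}(\Ama-\lambda)\bigr)A_\gamma^{-1}\ij_{Z'\to H}\gamma_{Z'}^{*},
\]
exhibiting each factor inside the $\psi$dbo calculus. By Proposition \ref{Proposition3.3} taken at $\lambda=0$ (legitimate since $0\in\varrho(A_\gamma)$), $\pr_{Z'}$ is a singular Green operator, so $\gamma_0\pr_{Z'}$ is a trace operator and its adjoint $\ij_{Z'\to H}\gamma_{Z'}^{*}$ a Poisson operator; $A_\gamma^{-1}$ is the classical Dirichlet solution operator of the form $Q_++G$; $\gamma_0$ is a standard trace operator; and, because $L$ is elliptic of order $1$ with $D(L)=H^{\frac32}$, the realisation $\wA$ represents the elliptic boundary condition \eqref{3.39}, so $\wA-\lambda$ is an elliptic boundary problem for $A-\lambda$ and its inverse $(\wA-\lambda)^{-1}$ (which exists for $\lambda\in\varrho(\wA)$) lies in the $\psi$dbo calculus by the Boutet de Monvel theory (\cite{B71,G96}). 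Composing these factors by the composition rules of the calculus, $M_L(\lambda)$ comes out as a $\psi$do on $\Sigma$, and an order count gives order $-1$. For its ellipticity I would use that the principal symbol of a composition is determined by the principal symbols of the factors, none of which depends on $\lambda$: $\lambda$ enters $A-\lambda$ only through a term of lower order than $A$, so the interior principal symbol of $(\wA-\lambda)^{-1}$ is $a^0(x,\xi)^{-1}$ and the relevant principal boundary symbols are governed by the principal symbols of $A$ and of $\nu_1-C\gamma_0$ alone. Hence the principal symbol of $M_L(\lambda)$ read off from \eqref{3.42} is independent of $\lambda\in\varrho(\wA)$, and by (i) it coincides with the invertible principal symbol of $-(L^\lambda)^{-1}$ obtaining for $\lambda\in\varrho(\wA)\cap\varrho(A_\gamma)$; therefore $M_L(\lambda)$ is elliptic of order $-1$ for every $\lambda\in\varrho(\wA)$.

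The main obstacle I expect lies in justifying that $(\wA-\lambda)^{-1}$ belongs to the $\psi$dbo calculus for every $\lambda\in\varrho(\wA)$ with $\lambda$-independent principal interior and boundary symbols: membership rests on the ellipticity of the boundary condition established in (i) together with the Boutet de Monvel parametrix construction for elliptic boundary problems, while the $\lambda$-independence of the principal symbols follows once one observes that the parametrix of $A-\lambda$, and of the associated boundary problem, can be built from symbols whose principal parts do not involve $\lambda$. The remaining issues — the precise orders and classes of the factors $\ij_{Z'\to H}\gamma_{Z'}^{*}$, $\gamma_0$ and $A_\gamma^{-1}$, and the order count in the composition — are routine bookkeeping within the calculus.
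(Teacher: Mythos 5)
Your proof takes essentially the same route as the paper's: parts (i) and (ii) are the observations already made in Subsection 3.2.4 before the theorem, and for (iii) you, like the paper, start from formula \eqref{3.42}, place $(\wA-\lambda)^{-1}$ and the other factors in the $\psi$dbo calculus, conclude via the composition rules that $M_L(\lambda)$ is a $\psi$do on $\Sigma$ of order $-1$, and obtain ellipticity from the $\lambda$-independence of the principal symbol, which equals the inverse of the principal symbol of $L$. You supply somewhat more explicit detail than the paper on the operator types of the individual factors and on why the principal symbol is $\lambda$-independent, but the underlying argument coincides.
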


\begin{proof} The assertions in (i) and (ii) were shown above.
For (iii), we use the formula
(\ref{3.42}) derived in (\ref{3.30}) from (\ref{2.30c}). When
$L$ is elliptic with $D(L)=H^\frac32$, the resolvent $(\wA-\lambda
)^{-1}$ belongs to the $\psi $dbo calculus and
 maps  $L_2(\Omega )$ into $H^2(\Omega )$; then the composition 
rules in the $\psi $dbo calculus imply
that $M_L(\lambda )$ is a $\psi $do over $\Sigma $. 
Since $\bigl(I-(\wA-\lambda )^{-1}(\Ama-\lambda
)\bigr)A_\gamma  ^{-1}$ maps $L_2(\Omega )$ continuously into $H^2(\Omega )$,
$M_L(\lambda )$ maps $H^\frac12$ continuously  
 into $H^{\frac32}$, hence is of order $-1$, for all
$\lambda \in \varrho (\wA)$. Since the {\it principal}
symbol of $M_{\wA}(\lambda )$ (in the $\psi $dbo calculus) is
independent  of $\lambda $, also the principal symbol of the
$\psi $do $M_L(\lambda )$ is independent of $\lambda $; it equals the
inverse of the principal symbol of $L$ and is therefore elliptic. 
\end{proof}

\begin{remark}\label{Remark 3.4}
The Kre\u\i{}n resolvent formula  \eqref{3.44} can be compared with the
standard resolvent formula \linebreak$(\wA-\lambda )^{-1}=Q_{\lambda
  ,+}+G_\lambda $ (cf.\ Seeley \cite{Se66} and e.g.\ Grubb \cite{G96}),
where $Q_\lambda $ is a parametrix (an approximate inverse) of
$A-\lambda $ on a neighborhood $\widetilde\Omega $ of $\comega$,
$Q_{\lambda ,+}$ is its truncation to $\Omega$, and $G_\lambda $ is a
singular Green operator adapted to the boundary condition (as in \eqref{3.2d}). This has
been used e.g.\ to show asymptotic kernel and trace expansions (also for the
associated heat operator). Formula \eqref{3.44} gives more direct
eigenvalue information.
\end{remark} 

\begin{remark}\label{Remark 3.5} 
The operators $L$ and $M_L(\lambda )$ can be pseudodifferential also in
non-elliptic cases. A striking case is where $L=0$ as an operator from
$H^{-\frac12}$ to $H^\frac12$, so that $\wA$
represents the boundary condition
\begin{equation}
\nu_1 u=P^{0}_{\gamma _0,\nu_1 }\gamma _0u.\label{3.45}
\end{equation}
This is  Kre\u\i{}n's ``soft extension'' \cite{K47}, namely the realisation $A_M$ with
domain
\begin{align}
D(A_M)=D(\Ami)\dot+Z,\label{3.46}
\end{align}
as described in  \cite{G68}. Its domain is not contained in any $H^s(\Omega
)$ with $s>0$. Here $M_L(\lambda )=-(P^{0}_{\gamma _0,\nu_1 }
-P^{\lambda }_{\gamma _0,\nu
_1})^{-1}$ when this inverse exists; cf.\ (\ref{3.43}). Spectral properties
of $A_M$ are worked out in \cite{G83}.
\end{remark}

\subsubsection{The general case}\label{sec:sub}
The previous subsections cover the cases where $T$ goes from $Z$ to $Z'$ in
Theorem 2.1
(called ``pure conditions'' in \cite{G68}). The family $\cal M$ moreover
contains the operators corresponding to closed, densely defined
operators $T:V\to W$ with arbitrary closed subspaces $V\subset Z$,
$W\subset Z'$. In \cite{G68}, it was shown that these correspond to operators $L:X\to
Y^*$, $X=\gamma _0V$ and $Y=\gamma _0W$ (subspaces of $H^{-\frac12}$);  then $\wA$ represents the
boundary condition
\begin{equation}
\gamma _0u\in X,\quad(L\gamma
_0u,\psi )_{Y^*,Y}=(\Gamma u,\psi )_{\frac12,-\frac12},
\text{
all }\psi \in Y.\label{3.47}
\end{equation}
The equation is also written  $L\gamma _0u=\Gamma u|_{Y}$
(restriction as a functional on $Y$).

Let us show how this looks when we use the modified
trace operators in (\ref{3.15}) mapping the maximal domains to $L_2(\Sigma )$. Setting
\begin{equation}
X_1=\Gamma _0V=\Lambda _{-\frac12}X,\;  Y_1=\Gamma _0W=\Lambda _{-\frac12}Y, \quad
 L_1=(\Gamma _{0,W}^*)^{-1}T\Gamma _{0,V}^{-1},\text{ with }D( L_1)
=\Gamma _0D(T),\label{3.48}
\end{equation}
where $\Gamma _{0,V}$ resp.\ $\Gamma _{0,W}$ denote the restrictions
of $\Gamma _0$ as mappings
from $V$ to $ X_1$ resp.\ from $W$ to $Y_1$, we have the following diagram:
\begin{equation}\label{diagram:subspace}
\CD
V     @>  \Gamma _{0,V}  >>    X_1\\
@V T VV           @VV  L_1  V\\
   W  @>>(\Gamma _{0,W}^*)^{-1} >   Y_1
\endCD 
 \end{equation}
We find as in
(\ref{3.18})--(\ref{3.22}) that the
statements $\pr_\zeta u\in D(T)$, $Tu_\zeta =(Au)_W$, carry over to
the statements 
\begin{equation}
\Gamma _0u\in D(L_1)\subset X_1,\quad L_1\Gamma
_0u=\pr_{Y_1}\Gamma _1u 
;\label{3.49}
\end{equation}
  this is then the boundary condition represented by $\wA$. 
Note that the condition $\Gamma _0u\in X_1$ enters
as an important part of the boundary condition, compensating for the
fact that $L_1$ acts between smaller spaces than in the case $V=Z$,
$W=Z'$.

Since $ \pr_{Y_1}\Gamma _1u= \pr_{Y_1}\Lambda _\frac12(\nu_1
u-P^0_{\gamma _0,\nu_1 }\gamma _0u)$,  
(\ref{3.49}) may also be written in terms of the standard traces $\gamma
_0u$ and $\nu_1 u$, as
\begin{equation}
\Lambda _{-\frac12}\gamma _0u\in D(L_1),\quad 
 \pr_{Y_1}\Lambda _\frac12\nu_1
u=(L_1+\pr_{Y_1}\Lambda _\frac12P^0_{\gamma _0,\nu_1 }\Lambda
_\frac12 \ij_{X_1\to L_2})\Lambda _{-\frac12}\gamma _0u.\label{3.50}
\end{equation}

Such formulations can likewise be pursued for $\wA-\lambda $, allowing an
extension of Theorems \ref{Theorem3.4} and \ref{Theorem3.4a}.
Let $\lambda \in \varrho (A_\gamma )$. Now $\Gamma _{0,V_\lambda
}=\Gamma _{0,V}F^\lambda _V:V_\lambda \simto X_1$, and $\Gamma _{0,W_{\bar\lambda}
}=\Gamma _{0,W}F^{\prime\bar\lambda }_W:W_{\bar\lambda }\simto Y_1$.
 The defining equation for
$T^\lambda $ is:
\begin{equation}
(T^\lambda \pr^\lambda _\zeta u,w)=((A-\lambda )u,w)\text{ for }u
\in D(\wA), w\in W_{\bar\lambda }.\label{3.51}
\end{equation}
We define
\begin{equation}
 L^\lambda _1=(\Gamma _{0,W_{\bar\lambda }}^*)^{-1}T^\lambda \Gamma _{0,V_\lambda }^{-1},\text{ with }D( L_1^\lambda )
=\Gamma _{0,V_\lambda }D(T^\lambda )=\Gamma _{0,V}D(T)=D(L_1),\label{3.48a}
\end{equation}
and then
rewrite the two sides as follows, denoting $\Gamma ^\lambda _1=\Lambda _\frac12(\nu_1 -P^\lambda _{\gamma _0,\nu_1 }\gamma
_0)$:
\begin{align}
(T^\lambda u^\lambda _\zeta ,w)&=(L^\lambda _1\Gamma _{0,V_\lambda
}u^\lambda _\zeta ,\Gamma _{0,W_{\bar\lambda }}w)_{L_2(\Sigma
  )}=(L_1^\lambda \Gamma _0u,\Gamma _0w)_{Y_1},\nonumber\\
((A-\lambda )u,w)&=(\Gamma ^\lambda _1u,\Gamma _0w)_{L_2(\Sigma )}=(\pr_{Y_1}\Gamma ^\lambda _1u,\Gamma _0w)_{Y_1}
. \nonumber
\end{align}
When $w$ runs through $W_{\bar\lambda }$, $\Gamma _0w$ runs through
$Y_1$, so we see that $\wA-\lambda $ represents the boundary condition
\begin{equation}
\Gamma _0u\in D(L_1)\subset X_1, \quad L^\lambda _1\Gamma _0u=\pr_{Y_1}\Gamma ^\lambda _1u.\label{3.55}
\end{equation}
It can also be written in terms of the standard trace maps as
\begin{equation}
\Lambda _{-\frac12} \gamma _0u\in D(L_1)\subset X_1,\quad L^\lambda_1\Lambda _{-\frac12}\gamma _0u=\pr_{Y_1}\Lambda _\frac12(\nu_1 u-P^\lambda _{\gamma _0,\nu_1 }\gamma
_0u),\quad 
\label{3.53} 
\end{equation}
where the equation can be rewritten as
\begin{equation}
 \pr_{Y_1}\Lambda _\frac12\nu_1
u=(L_1^\lambda +\pr_{Y_1}\Lambda _\frac12P^\lambda _{\gamma _0,\nu_1 }\Lambda
_\frac12 \ij_{X_1\to L_2})\Lambda _{-\frac12}\gamma _0u.\label{3.50a}
\end{equation}



Since $D(\wA-\lambda )=D(\wA)$, we have in view of (\ref{3.50}):
\begin{equation}
L_1^\lambda =L_1+\pr_{Y_1}\Lambda _\frac12(P^0_{\gamma _0,\nu_1 }
-P^\lambda _{\gamma _0,\nu_1 })\Lambda _\frac12 \ij_{X_1\to L_2},\text{ when $\lambda \in \varrho (A_\gamma )$. }\label{3.56}
\end{equation}

All this leads to:

\begin{theorem}
  
    \label{Theorem3.6a} For the second-order strongly elliptic operator
$A$ introduced above, let the closed realisation $\wA$ correspond to $T:V\to
W$ by Theorem {\rm \ref{Theorem2.1}}.

{\rm (i)} Define $X_1$, $ Y_1$ and $L_1$ by  {\rm(\ref{3.48})}; then $\wA$
represents the boundary condition  {\rm(\ref{3.49})}, more explicitly
written as  {\rm(\ref{3.50})}.

{\rm (ii)} For any $\lambda \in \varrho (A_\gamma )$, $\wA-\lambda $ 
corresponds to the operator $L^\lambda _1:X_1\to Y_1$ defined 
in {\rm (\ref{3.48a})}, and represents the boundary condition 
{\rm (\ref{3.55})}, also written as in {\rm (\ref{3.53}), (\ref{3.50a})}. Here 
$L^\lambda _1$ and $L_1$ are related by {\rm (\ref{3.56})}, and
    \begin{align}
  \operatorname{ker}(\wA-\lambda )&=K^\lambda _\gamma \Lambda_\frac12\operatorname{ker}L^\lambda_1,\nonumber\\
\operatorname{ran}(\wA-\lambda )&=\Gamma _{0,W_{\bar\lambda}}^*\operatorname{ran}L_1^\lambda+(H\ominus W_{\bar\lambda }).
  \label{3.58}
\end{align}

\end{theorem}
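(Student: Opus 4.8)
The plan is to mirror, in the subspace setting, the chain of reductions already carried out in Theorems~\ref{Theorem3.4} and~\ref{Theorem3.4a} for the ``pure'' case $V=Z$, $W=Z'$, now using the modified trace maps $\Gamma _0=\Lambda _{-\frac12}\gamma _0$ and $\Gamma _1=\Lambda _{\frac12}\Gamma $ of \eqref{3.15} and their restrictions $\Gamma _{0,V}$, $\Gamma _{0,W}$. Part~(i) is essentially bookkeeping: the diagram \eqref{diagram:subspace} expresses $L_1=(\Gamma _{0,W}^*)^{-1}T\Gamma _{0,V}^{-1}$ as a homeomorphic translate of $T$, and the computation \eqref{3.49}--\eqref{3.50} (carried out exactly as in \eqref{3.18}--\eqref{3.22}, but projecting onto $Y_1$ since $w$ now only runs through $W$) shows that the conditions $\pr_\zeta u\in D(T)$, $Tu_\zeta =(Au)_W$ defining $D(\wA)$ through Theorem~\ref{Theorem2.1} are equivalent to \eqref{3.49}, which is \eqref{3.50} once rewritten via $\Gamma _1u=\Lambda _{\frac12}(\nu_1u-P^0_{\gamma _0,\nu_1}\gamma _0u)$. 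So (i) follows from the already-displayed material; I would just assemble it.

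For part~(ii), I would first invoke Corollary~\ref{Corollary2.3} to get that $\wA-\lambda $ corresponds to $T^\lambda :V_\lambda \to W_{\bar\lambda }$, and then use \eqref{2.20} of Theorem~\ref{Theorem2.4}, namely $V_\lambda =E^\lambda V$, $W_{\bar\lambda }=E^{\prime\bar\lambda }W$, together with the relations $\Gamma _{0,V_\lambda }=\Gamma _{0,V}F^\lambda _V$ and $\Gamma _{0,W_{\bar\lambda }}=\Gamma _{0,W}F^{\prime\bar\lambda }_W$ (these hold because $\gamma _0F^\lambda =\gamma _0$, as in \eqref{3.24a}, and $\Gamma _0=\Lambda _{-\frac12}\gamma _0$) to see that $L_1^\lambda $ defined by \eqref{3.48a} has domain $\Gamma _{0,V}D(T)=D(L_1)$. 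The defining equation \eqref{3.51} for $T^\lambda $, rewritten through the two displayed identities just above \eqref{3.55}, gives the boundary condition \eqref{3.55}, and its standard-trace forms \eqref{3.53}, \eqref{3.50a} follow by substituting $\Gamma _1^\lambda u=\Lambda _{\frac12}(\nu_1u-P^\lambda _{\gamma _0,\nu_1}\gamma _0u)$ and $\Gamma _0=\Lambda _{-\frac12}\gamma _0$. The relation \eqref{3.56} between $L_1^\lambda $ and $L_1$ then comes from $D(\wA-\lambda )=D(\wA)$: comparing \eqref{3.50a} with \eqref{3.50} forces the stated identity, exactly as \eqref{3.28} was obtained in the pure case (alternatively, translate \eqref{2.21}--\eqref{2.22} through the homeomorphisms).

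Finally, for the kernel and range formulas \eqref{3.58}, I would feed \eqref{2.38} of Theorem~\ref{Theorem 2.12} (which gives $\operatorname{ker}(\wA-\lambda )=E^\lambda _V\operatorname{ker}(T+G^\lambda _{V,W})$ and $\operatorname{ran}(\wA-\lambda )=(F^{\prime\bar\lambda }_W)^*\operatorname{ran}(T+G^\lambda _{V,W})+H\ominus W_{\bar\lambda }$) through the homeomorphisms translating $T+G^\lambda _{V,W}$ to $L_1^\lambda $. For the kernel: $T+G^\lambda _{V,W}$ is $\Gamma _{0,V}^{-1}$ followed by $L_1^\lambda $ followed by $\Gamma _{0,W}^*$ (up to the $F$'s absorbed as in \eqref{3.27a}), so $\operatorname{ker}(T+G^\lambda _{V,W})=\Gamma _{0,V}^{-1}\operatorname{ker}L_1^\lambda =\gamma _{Z_\lambda }^{-1}\Lambda _{\frac12}\operatorname{ker}L_1^\lambda $ on $V_\lambda $-side after applying $E^\lambda _V$; since $\ij_{Z_\lambda \to H}\gamma _{Z_\lambda }^{-1}=K^\lambda _\gamma $, this is $K^\lambda _\gamma \Lambda _{\frac12}\operatorname{ker}L_1^\lambda $. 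For the range: $(F^{\prime\bar\lambda }_W)^*$ composed with $(\Gamma _{0,W}^*)^{-1}$ gives $(\Gamma _{0,W_{\bar\lambda }}^*)^{-1}$, so $(F^{\prime\bar\lambda }_W)^*\operatorname{ran}(T+G^\lambda _{V,W})=\Gamma _{0,W_{\bar\lambda }}^*\operatorname{ran}L_1^\lambda $, giving the second line. The main obstacle I anticipate is purely notational: keeping the four-fold composition of homeomorphisms $E^\lambda _V$, $F^\lambda _V$, $\Gamma _{0,V}$, $\gamma _{Z_\lambda }$ (and their primed analogues) consistent, and checking that $\Gamma _{0,W_{\bar\lambda }}^*$ is exactly the adjoint appearing, with the correct identification of $\Lambda _{\frac12}$ factors between the $\gamma _0$-world and the $\Gamma _0$-world; once the diagrams \eqref{diagram:subspace} and its $\lambda $-version commute, everything is forced and the proof can simply say ``follows from \eqref{2.38} and the diagram'' as in the proof of Theorem~\ref{Theorem3.4}.
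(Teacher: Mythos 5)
Your proposal is correct and follows the paper's own route: the paper's proof likewise notes that everything up to and including \eqref{3.56} is established in the text preceding the theorem, and then obtains \eqref{3.58} by pushing \eqref{2.38} through the transformation maps $\Gamma_{0,V}$, $\Gamma_{0,W}$, $E^\lambda_V$, $F^{\prime\bar\lambda}_W$, exactly as you do. (One minor slip: you write that $T+G^\lambda_{V,W}$ is ``$\Gamma_{0,V}^{-1}$ followed by $L_1^\lambda$ followed by $\Gamma_{0,W}^*$''; it should be $\Gamma_{0,V}$ followed by $L_1^\lambda$ followed by $\Gamma_{0,W}^*$, but the kernel/range consequences you draw are the right ones.)
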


\begin{proof} In view of the preparations before the theorem, it
remains to account for (\ref{3.58}), which
follows by application of the various transformation maps to (\ref{2.38}).  
\end{proof}

The $M$-function in this set-up is defined from formula (\ref{2.41}),
in a similar way as in (\ref{3.30}):
\begin{align}
M_{L_1}(\lambda )
&=\Gamma _{0,V} M_{\wA}(\lambda )\Gamma _{0,W}^*
=\Lambda _{-\frac12}\gamma _V M_{\wA}(\lambda )\Gamma _{0,W}^*
\nonumber\\
&=\Lambda _{-\frac12}\gamma _V \pr_\zeta \bigl((I-(\wA-\lambda )^{-1}(\Ama-\lambda
)\bigr)A_\gamma  ^{-1}\ij_{W\to H}\Gamma _{0,W}^*\nonumber \\
& =\Lambda _{-\frac12}\gamma _0\bigl(I-(\wA-\lambda )^{-1}(\Ama-\lambda
)\bigr)A_\gamma  ^{-1}\ij_{W\to H}\Gamma _{0,W}^*.\label{3.57}
\end{align}

Here, when $\lambda \in \varrho (\wA)\cap\varrho (A_\gamma
)$, 
$-M_{L_1}(\lambda )$ is the inverse of $L_1^\lambda $.
We therefore have the following:

\begin{theorem}
  
    \label{Theorem3.6b} Once again, let $A$ be the second-order strongly elliptic operator
introduced above and let the closed realisation $\wA$ correspond to $T:V\to
W$ by Theorem {\rm \ref{Theorem2.1}}.
  %
%
%
  
   The $M$-function in this setting is 
  \begin{equation}
M_{L_1}(\lambda ) =\Lambda _{-\frac12}\gamma _0\bigl(I-(\wA-\lambda
)^{-1}(\Ama-\lambda)\bigr)A_\gamma  ^{-1}\ij_{W\to H} \Gamma _{0,W}^*,\label{3.59}
\end{equation}
  a family of bounded operators from $Y_1$ to $X_1$, depending holomorphically on $\lambda \in \varrho (\wA)$.   For $\lambda \in \varrho (\wA)\cap \varrho (A_\gamma )$,it satisfies
   \begin{equation}
M_{L_1}(\lambda )=-(L_1+\pr_{Y_1}\Lambda _\frac12(P^0_{\gamma _0,\nu_1}-P^\lambda _{\gamma _0,\nu_1 })\Lambda _\frac12 \ij_{X_1\to L_2})^{-1}.
\label{3.60}
\end{equation}
  The following resolvent formula  holds for all $\lambda \in \varrho (\wA)\cap\varrho (A_\gamma )$:
  \begin{align}
(\wA-\lambda )^{-1}&=(A_\gamma -\lambda )^{-1}-\ij_{V_\lambda \to
  H}\Gamma _{0,V_\lambda }^{-1} M_{L_1}(\lambda )(\Gamma
_{0,W_{\bar\lambda }}^*)^{-1}\pr_{W_{\bar\lambda }}\label{3.61}\\
&=(A_\gamma -\lambda )^{-1}-K^\lambda _{\gamma ,X_1}
M_{L_1}(\lambda )
(K^{\prime\bar\lambda }_{\gamma ,Y_1})^*,\nonumber
\end{align}
  where $K^\lambda _{\gamma ,X_1}:X_1\to H$ acts like the
  composition of $\Lambda _{\frac12}:X_1\to X$, $\ij_{X\to H^{-\frac12}}$ and $K^\lambda _\gamma
  :H^{-\frac12}\to H$.
\end{theorem}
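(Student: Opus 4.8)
The plan is to transport the abstract $M$-function $M_{\wA}(\lambda )$ of Theorem \ref{Theorem 2.12} down to the boundary spaces by conjugation with the trace homeomorphisms $\Gamma _{0,V}:V\simto X_1$ and $\Gamma _{0,W}:W\simto Y_1$ from \eqref{3.48}, exactly as in the pure-condition case behind Theorem \ref{Theorem3.4a}. The definition $M_{L_1}(\lambda )=\Gamma _{0,V}M_{\wA}(\lambda )\Gamma _{0,W}^{*}$, together with formula \eqref{2.41} for $M_{\wA}(\lambda )$ and the vanishing of $\gamma _0$ on $D(A_\gamma )$, gives the explicit expression \eqref{3.59}, as already spelled out in \eqref{3.57}. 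Since $M_{\wA}(\lambda )$ is a holomorphic family of bounded operators on $\varrho (\wA)$ by Theorem \ref{Theorem 2.12} and $\Gamma _{0,V},\Gamma _{0,W}$ are $\lambda $-independent homeomorphisms, $M_{L_1}(\lambda )$ is a holomorphic family of bounded operators from $Y_1$ to $X_1$.

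For $\lambda \in \varrho (\wA)\cap \varrho (A_\gamma )$ I would use the identity $-M_{\wA}(\lambda )=(T+G^\lambda _{V,W})^{-1}$ from Theorem \ref{Theorem 2.12}. Combining the definition \eqref{3.48a} of $L_1^\lambda $ with \eqref{2.23} and the relations $\Gamma _{0,V_\lambda }=\Gamma _{0,V}F^\lambda _V$, $\Gamma _{0,W_{\bar\lambda }}=\Gamma _{0,W}F^{\prime\bar\lambda }_W$ shows $L_1^\lambda =(\Gamma _{0,W}^{*})^{-1}(T+G^\lambda _{V,W})\Gamma _{0,V}^{-1}$, i.e.\ $L_1^\lambda $ stands to $T+G^\lambda _{V,W}$ as $L_1$ stands to $T$ in \eqref{diagram:subspace}. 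Conjugating the above identity then yields $-M_{L_1}(\lambda )=(L_1^\lambda )^{-1}$, and inserting the already-proved relation \eqref{3.56} for $L_1^\lambda $ gives \eqref{3.60}.

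For the Kre\u\i{}n resolvent formula I would begin from \eqref{2.37} with $A_\beta =A_\gamma $, substitute $M_{\wA}(\lambda )=\Gamma _{0,V}^{-1}M_{L_1}(\lambda )(\Gamma _{0,W}^{*})^{-1}$, and simplify the factor $E^\lambda _V M_{\wA}(\lambda )(E^{\prime\bar\lambda }_W)^{*}$ using $E^\lambda _V=\Gamma _{0,V_\lambda }^{-1}\Gamma _{0,V}$ and $(E^{\prime\bar\lambda }_W)^{*}=\Gamma _{0,W}^{*}(\Gamma _{0,W_{\bar\lambda }}^{*})^{-1}$, both of which follow from $\Gamma _{0,V_\lambda }=\Gamma _{0,V}F^\lambda _V$ and $\Gamma _{0,W_{\bar\lambda }}=\Gamma _{0,W}F^{\prime\bar\lambda }_W$; this collapses to $\Gamma _{0,V_\lambda }^{-1}M_{L_1}(\lambda )(\Gamma _{0,W_{\bar\lambda }}^{*})^{-1}$ and hence to the first line of \eqref{3.61}. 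For the second line I would identify $\ij_{V_\lambda \to H}\Gamma _{0,V_\lambda }^{-1}$ with $K^\lambda _{\gamma ,X_1}$, the composition of $\Lambda _{\frac12}:X_1\to X$, $\ij_{X\to H^{-\frac12}}$ and $K^\lambda _\gamma :H^{-\frac12}\to H$ --- using $\Gamma _{0,V_\lambda }=\Lambda _{-\frac12}\gamma _{V_\lambda }$ and that $K^\lambda _\gamma $ inverts $\gamma _0$ on $Z_\lambda $ --- and dually $(\Gamma _{0,W_{\bar\lambda }}^{*})^{-1}\pr_{W_{\bar\lambda }}=(\ij_{W_{\bar\lambda }\to H}\Gamma _{0,W_{\bar\lambda }}^{-1})^{*}=(K^{\prime\bar\lambda }_{\gamma ,Y_1})^{*}$, just as in the second-line computation in the proof of Theorem \ref{Theorem3.4a}.

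The main thing to watch is the bookkeeping across three layers of homeomorphisms --- the abstract maps $E^\lambda _V,F^\lambda _V$ and their primed versions, the subspace-restricted traces $\Gamma _{0,V},\Gamma _{0,W}$ and their $\lambda $-dependent analogues, and the $L_2$-normalising isometries $\Lambda _{\pm\frac12}$ --- and especially the verification that $\ij_{V_\lambda \to H}\Gamma _{0,V_\lambda }^{-1}$ factors as the composition involving $K^\lambda _\gamma $ claimed in the statement. But all of these pieces have been prepared in the run-up to Theorem \ref{Theorem3.6a}, and the argument is otherwise a word-for-word transcription of the $V=Z$, $W=Z'$ case handled in Theorem \ref{Theorem3.4a}, so I do not expect any genuinely new difficulty.
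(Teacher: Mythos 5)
Your proposal is correct and follows essentially the same approach as the paper: define $M_{L_1}(\lambda)=\Gamma_{0,V}M_{\wA}(\lambda)\Gamma_{0,W}^*$, derive \eqref{3.59} from \eqref{2.41}, obtain \eqref{3.60} by conjugating $-M_{\wA}(\lambda)=(T+G^\lambda_{V,W})^{-1}$ through the trace homeomorphisms, and get \eqref{3.61} by substituting into \eqref{2.37} and simplifying via $\Gamma_{0,V_\lambda}=\Gamma_{0,V}F^\lambda_V$, $\Gamma_{0,W_{\bar\lambda}}=\Gamma_{0,W}F^{\prime\bar\lambda}_W$. The paper's own proof is just the single sentence that \eqref{3.61} follows from \eqref{2.37}, with all the other identifications already prepared in the text preceding the theorem; your write-up merely spells out those preparatory steps.
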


\begin{proof} It
remains to show (\ref{3.61}), which follows from (\ref{2.37}).  
\end{proof}

The analysis of the realisations corresponding to operators $T:Z\to
Z'$ covers all the most frequently studied boundary conditions for 
second-order scalar elliptic operators, whereas the cases where $T$ 
acts between nontrivial subspaces of $Z$ and $Z'$ are more exotic. 
For example, the Zaremba problem, where Dirichlet resp.\ 
Neumann conditions are imposed on two closed subsets 
$\Sigma _D$ resp.\ $\Sigma _N$ of $\Sigma $ with common boundary 
and covering $\Sigma$,  leads to the  subspace 
$V=K^0_\gamma H^{-\frac12}_0(\Sigma _N)$, which presents additional 
technical difficulties not covered by the $\psi $dbo calculus. 

But when we go beyond the scalar second-order case, subspace 
situations have a primary interest; see the next section.

\subsection{Higher order operators and systems} \label{sec3.3}
 
Let us now consider systems (matrix-formed
operators)  and
higher order elliptic operators. Here one finds that subspace cases occur very
naturally and allow studies within the $\psi $dbo calculus, with much
the same flavour as in Theorems \ref{Theorem3.4}, \ref{Theorem3.4a}
and \ref{Theorem3.4b}.
For even-order operators, a
general and useful framework was worked out in  \cite{G74} --- 
normal boundary conditions
for operators acting between vector bundles --- 
which could be the point of departure for ample generalisations. 
Scalar operators are covered by a simpler analysis in \cite{G71}.
We shall here  show in detail how the analysis of \cite{G71} can be
used, and illustrate
systems cases by examples.

\begin{example}\label{Example3.7} Let $A=(A_{jk})$ be an $p\times p$-matrix of
second-order differential operators on $\Omega $, elliptic in the 
sense that the
determinant of the principal symbol is nonzero for $x\in\comega$, $\xi
\ne 0$. We here have a Green's formula like \eqref{3.3}, but where
$s_0(x)$ is a regular $p\times p$-matrix and $\bar s_0(x)$ is replaced
by $s_0(x)^*$. There is again a Dirichlet realisation $A_\gamma $, and if it
is elliptic and $0\in \varrho (A_\gamma )$, we can repeat the study of
Section \ref{sec3.2}, now for $p$-vectors. That will cover boundary conditions
of the form (\ref{3.23}), and give a somewhat abstract treatment of the more
general cases.
 
Now one can also consider boundary conditions where a Dirichlet
condition is imposed on some components of $\gamma _0u$ and a Neumann condition
is imposed on other components of $\nu _1u$. This is very simple to
explain when $s_0(x)=I$, so let us consider that case. Let $\wA$ be
determined by a boundary
condition of the type
\begin{equation}
\gamma _0u_{N_0}=0, \quad \gamma _1u_{N_1}-C\gamma _0u_{N_1}=0,\label{3.62}
\end{equation}
where $N_0$ and $N_1$ are complementing subsets of the index set
$N=\{1,2,\dots,p\}$ with $p_0$ resp.\ $p_1$ elements. Using the homeomorphism 
$
\gamma _Z: Z\simto \prod_{1\le j\le p}H^{-\frac12}(\Sigma )
$
and similar vector valued versions of the other mappings in Section
\ref{sec3.2}, one finds that realisations of boundary conditions (\ref{3.62})
correspond to operators 
\begin{equation}
 L:\prod_{ j\in N_1}H^{-\frac12}(\Sigma
)\to\prod_{ j\in N_1}H^{\frac12}(\Sigma ),\quad
L=C-\pr_{N_1}P^0_{\gamma _0,\gamma _1i}\ij_{N_1},\label{3.63}
\end{equation}
where $\pr_{N_1}$ projects $\{\varphi _j\}_{j\in N}$ onto $\{\varphi
_j\}_{j\in N_1}$, and $\ij_{N_1}$ injects vectors $\{\varphi _j\}_{j\in
N_1}$ into vectors indexed by
$N$ by supplying with zeroes at the places indexed by $N_0$. Again, ellipticity
makes the realisation regular, and there are formulas very similar to
those in Section \ref{sec3.2}:
\begin{align}
&L^\lambda =L+\pr_{N_1}(P^0_{\gamma _0,\gamma
_1}-P^\lambda _{\gamma _0,\gamma _1})\ij_{N_1}=C-\pr_{N_1}P^\lambda
_{\gamma _0,\gamma _1}\ij_{N_1},\quad M_L(\lambda )=-(L^\lambda
)^{-1},\label{3.64}\\
&(\wA-\lambda )^{-1}=(A_\gamma -\lambda )^{-1}-K^\lambda _\gamma  \ij_{N_1}
M_L(\lambda )\pr_{N_1}(K^{\prime\bar\lambda}_\gamma )^*,\nonumber
\end{align}
where $L^\lambda $ and $M_L(\lambda )$ are elliptic $\psi $do's when the boundary condition is elliptic.

Notation gets a little more complicated if $s_0$ is not in diagonal
form, or if  $\pr_{N_1}$ is replaced by
a projection onto a
$p_1$-dimensional subspace of ${\mathbb C}^p$ that varies with $x\in
\Sigma $. Then it is useful to apply vector bundle notation, as in
\cite{G74}, where fully general boundary conditions are treated.
\end{example}
\smallskip

Next, consider the case where $A$ is a $2m$-order operator
$A=\sum_{|\alpha |\le 2m}a_\alpha
(x)D^\alpha $
(scalar or matrix-formed), elliptic on $\comega$. The
Cauchy data are the boundary values
\begin{equation}
\varrho u=\{\gamma _0u,\dots,\gamma _{2m-1}u\},\label{3.65}
\end{equation}
which one can split into the Dirichlet data and the Neumann data
\begin{equation}
\gamma u= \{\gamma _0u,\dots,\gamma _{m-1}u\},\quad \nu u=\{\gamma _mu,\dots,\gamma _{2m-1}u\}.\label{3.66}
\end{equation}
There is a Green's formula for $u,v\in H^{2m}(\Omega )$:
\begin{equation}
(Au,v)-(u,A'v)= (\chi  u, \gamma v)-(\gamma u, \chi '
 v),\quad \chi u={\cal A}_1\nu u,\quad \chi 'v={\cal A}'_1\nu
v+{\cal A}'_0\gamma v,\label{3.67}
\end{equation}
where 
${\cal A}_1$ and  ${\cal A}'_1$ (both
invertible) and ${\cal A}'_0$ are suitable $m\times m$ matrices of differential operators over
$\Sigma $. Here $\chi =\{\chi _0,\dots,\chi _{m-1}\}$
with $\chi _j$ of order $2m-j-1$, and the trace maps are continuous,
\begin{equation}
\gamma :Z_\lambda ^s(A)\to \prod_{0\le j\le m-1}H^{s-j-\frac12}(\Sigma
),\quad \chi  :Z_\lambda ^s(A)\to \prod_{0\le j\le m-1}H^{s-2m+j+\frac12}(\Sigma
),\label{3.68}
\end{equation}
for $s\in {\mathbb R}$. 
 If the Dirichlet problem is uniquely
solvable, one can again use $A_\gamma $ as reference operator, and set
\begin{equation}
P^\lambda _{\gamma ,\chi }=\chi K^\lambda _\gamma ;\quad 
P^{\prime\bar\lambda }_{\gamma ,\chi '}=\chi 'K^{\prime\bar\lambda }_\gamma ;
 \label{3.70}
\end{equation}
with $K^\lambda _\gamma $ resp.\ $K^{\prime\bar\lambda }_\gamma$
denoting the Poisson operator solving
\begin{equation}
(A-\lambda )u=0\text{ resp.\ }(A'-\bar\lambda )u=0\text{ in }\Omega
,\quad \gamma u= \varphi .\label{3.71} 
\end{equation}
Now
\begin{equation}
\Gamma =\chi -P^0_{\gamma ,\chi }\gamma \;\text{ and }\; \Gamma '=\chi
'-P^{\prime 0}_{\gamma ,\chi '}\gamma 
\end{equation}
are defined as continuous maps from $D(\Ama)$, resp.\ $D(\Ama')$ to
$\prod _{j<m }H^{j+\frac12}(\Sigma )$, and 
there is a generalisation of (\ref{3.67}) valid for all $u\in
D(\Ama)$, $v\in D(\Ama')$:
\begin{equation}
(Au,v)-(u,A'v)= (\Gamma   u, \gamma v)_{\{j+\frac12\},\{-j-\frac12\}}-(\gamma u, \Gamma '
 v)_{\{-j-\frac12\},\{j+\frac12\}}\label{3.69}
\end{equation}
(where $(\cdot,\cdot)_{\{s_j\} ,\{-s_j\} }$ indicates the duality pairing
between $\prod _{j<m }H^{s_j}(\Sigma )$ and $\prod_{j<m }
H^{-s_j}(\Sigma )$). 

It should be noted that $P^\lambda _{\gamma ,\chi }$ is a {\it mixed-order
system}:
$$
P^\lambda _{\gamma ,\chi }=(P_{jk})_{j,k=0,\dots,m-1};  \quad P_{jk}\text{ of order }2m-1-j-k,
$$
and the principal symbol and possible ellipticity is defined accordingly. Such systems (with
differential operator entries) were first considered by Douglis and
Nirenberg \cite{DN55} and by Volevich
\cite{V63}; in the elliptic case they are called Douglis-Nirenberg elliptic. 

Formula (\ref{3.69}) can be turned into a boundary triplet formula with
${\cal H=\cal K}=L_2(\Sigma )^m$
by composition of $\Gamma $ and $\Gamma '$ with a symmetric $\psi $do
defining the norms:
\begin{equation}
\Theta =(\delta _{kl}\Lambda
_{k+\frac12})_{k,l=0,\dots,m-1}:\prod_{0\le j<m } 
H^{j+\frac12}(\Sigma )\simto L_2(\Sigma )^{m},\label{3.72}
\end{equation}
and composition of $\gamma $ with $\Theta ^{-1}$, setting 
\begin{equation}
\Gamma _1=\Theta \Gamma ,\quad \Gamma '_1=\Theta \Gamma ',\quad 
\Gamma _0=\Theta ^{-1}\gamma =\Gamma '_0;\label{3.73}
\end{equation}
 this leads to a word-for-word generalisation of Proposition 3.1,
with 
\begin{equation}
(Au,v)_{L_2(\Omega )}-(u,A'v)_{L_2(\Omega )}=(\Gamma _1 u,\Gamma '_0v)_{L_2(\Sigma
)^m}-(\Gamma _0u, \Gamma _1'v)_{L_2(\Sigma )^m},
\label{3.73a}
\end{equation}
for $u\in D(\Ama)$, $ v\in D(\Ama')$.

The straightforward continuation of what we did in Section 3.2 
is the study of boundary conditions of the type 
$
\chi  u=C \gamma u,
$
that are the ones obtained when $\wA$ corresponds to $T:Z\to Z'$. When
$\wA$ corresponds to $T:V\to W$, we find that it represents a boundary
condition similar to (\ref{3.49}). Since the proofs in Section 3.2
generalise immediately to this situation, we can state:

\begin{corollary}\label{Corollary3.8}
Theorems {\rm \ref{Theorem3.4}}, {\rm \ref{Theorem3.4a}} and {\rm \ref{Theorem3.4b}} extend to the present situation for
elliptic operators of order $2m$, when
$H^{-\frac12}(\Sigma )$ is replaced by $\prod_{j<m } H^{-j-\frac12}(\Sigma )$, and $\gamma
_0$ and $\nu _1$ are replaced by $\gamma $ and $\chi $.

Theorems {\rm \ref{Theorem3.6a}} and {\rm \ref{Theorem3.6b}} likewise extends when, moreover, $L_2(\Sigma )$
is replaced by $L_2(\Sigma )^m$ and {\rm (\ref{3.73a})} is used.

\end{corollary}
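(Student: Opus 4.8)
The plan is to check that every structural ingredient used in Subsections~\ref{sec:trans}--\ref{sec:sub} has an exact counterpart in the order-$2m$ situation, so that the arguments transcribe verbatim under the dictionary: the Dirichlet trace $\gamma _0$ is replaced by the Dirichlet data $\gamma =\{\gamma _0,\dots,\gamma _{m-1}\}$, the conormal trace $\nu _1$ by $\chi ={\cal A}_1\nu $, the boundary spaces $H^{-\frac12}(\Sigma )$, $H^{\frac12}(\Sigma )$ by $\prod_{j<m}H^{-j-\frac12}(\Sigma )$, $\prod_{j<m}H^{j+\frac12}(\Sigma )$, the Green's formula \eqref{3.12} by \eqref{3.69}, the operators $\Gamma ,\Gamma '$ of \eqref{3.11} by $\Gamma =\chi -P^0_{\gamma ,\chi }\gamma $ and $\Gamma '=\chi '-P^{\prime 0}_{\gamma ,\chi '}\gamma $, and the Dirichlet-to-Neumann operator $P^\lambda _{\gamma _0,\nu _1}$ by the mixed-order system $P^\lambda _{\gamma ,\chi }=\chi K^\lambda _\gamma $. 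One keeps the standing assumption that the Dirichlet problem is uniquely solvable, so that $A_\gamma $ is an elliptic reference operator with $0\in\varrho (A_\gamma )$; by \eqref{3.68} the map $\gamma $ is a homeomorphism of $Z^s_\lambda (A)$ onto $\prod_{j<m}H^{s-j-\frac12}(\Sigma )$ with inverse $K^\lambda _\gamma $ (range restricted), and $\chi $ is continuous on $Z^s_\lambda (A)$ into the corresponding product, so $P^\lambda _{\gamma ,\chi }$ is a well-defined Douglis--Nirenberg system. These are precisely the properties of $\gamma _0$, $K^\lambda _\gamma $ that Section~\ref{sec3.2} exploits.

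Granting this dictionary, the translation diagram defining $L=(\gamma ^*_{Z'})^{-1}T\gamma _Z^{-1}$ and the computation \eqref{3.18}--\eqref{3.22} identifying the boundary condition of $\wA$ as $\chi u=(L+P^0_{\gamma ,\chi })\gamma u$ go through word for word, using the analogue of \eqref{3.13}, namely $(Au,w)=(\Gamma u,\gamma w)_{\{j+\frac12\},\{-j-\frac12\}}$ for $w\in Z^0_0(A')$ (which follows from \eqref{3.69} since $\Gamma 'w=0$ there). For the $\lambda $-dependent statements one uses $\gamma F^\lambda u=\gamma (u-\lambda A_\gamma ^{-1}u)=\gamma u$, valid because $D(A_\gamma )\subset\ker\gamma $, giving the analogues of \eqref{3.24a}--\eqref{3.28}, hence Theorem~\ref{Theorem3.4}; the kernel/range identities come from \eqref{2.14}. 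For Theorem~\ref{Theorem3.4a}, $M_L(\lambda )$ is defined by formula \eqref{3.42} with $\gamma _0$ replaced by $\gamma $, and the Kre\u\i{}n resolvent formula \eqref{3.44} follows from \eqref{2.37} by the same calculation. For Theorem~\ref{Theorem3.4b} one repeats Proposition~\ref{Proposition3.3} with $(A'-\bar\lambda )(A-\lambda )$ now a formally selfadjoint strongly elliptic operator of order $4m$ whose Dirichlet problem $\varrho u=0$ has a positive Friedrichs solution operator of class $-2m$; the $\psi $dbo composition rules then place $\pr_{Z_\lambda }$, $\pr_{Z'_{\bar\lambda }}$ and their adjoints in the calculus, and the ellipticity/regularity discussion ($D(\wA)\subset H^{2m}(\Omega )\iff D(L)\subset\prod_{j<m}H^{2m-j-\frac12}(\Sigma )$; selfadjointness when $A=A'$ and $L$ is symmetric and elliptic; holomorphy of $M_L(\lambda )$ as an elliptic system on all of $\varrho (\wA)$) is verbatim, with ``elliptic $\psi $do of order $1$'' read as ``Douglis--Nirenberg elliptic system'' and ``order $-1$'' as the dual mixed order. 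Theorems~\ref{Theorem3.6a} and~\ref{Theorem3.6b} come out the same way after passing to the $L_2(\Sigma )^m$-valued boundary triplet \eqref{3.72}--\eqref{3.73a} in place of \eqref{3.15}--\eqref{3.16}: the subspace diagram of Subsection~\ref{sec:sub} and the computations leading to \eqref{3.49}--\eqref{3.61} generalise literally, with $L_2(\Sigma )$ replaced by $L_2(\Sigma )^m$ and $\Lambda _{\pm\frac12}$ by $\Theta ^{\pm1}$.

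The one point that is more than pure bookkeeping is the order count for the mixed-order (Douglis--Nirenberg) systems, that is, the analogue of Remark~\ref{Remark3.2a}. The systems $P^0_{\gamma ,\chi }$ and $P^\lambda _{\gamma ,\chi }=(P_{jk})$ have entries $P_{jk}$ of order $2m-1-j-k$ with the same principal symbol, and what must be verified is that the difference $P^0_{\gamma ,\chi }-P^\lambda _{\gamma ,\chi }$ drops one order (entries of order $2m-2-j-k$); this makes the $\lambda $-dependent corrections in the analogues of \eqref{3.28} and \eqref{3.56} lower order than the principal terms, so that $L^\lambda $ (resp.\ $L^\lambda _1$) shares its principal part with $L$ (resp.\ $L_1$) and $M_L(\lambda )=-(L^\lambda )^{-1}$ is an elliptic system of the opposite mixed order. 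As in Remark~\ref{Remark3.2a} this follows from the boundedness of $G^\lambda :Z\to Z'$ transported through the homeomorphisms $\gamma _Z$, $\gamma _{Z'}$; concretely, one checks that boundedness $\prod_{j<m}H^{-j-\frac12}(\Sigma )\to\prod_{j<m}H^{j+\frac12}(\Sigma )$ is the ``one-order-lower'' Douglis--Nirenberg estimate for a system whose row and column weights are $\{j+\frac12\}$ and $\{-k-\frac12\}$. I expect this order-bookkeeping, together with the invocation of the precise mapping and ellipticity properties of mixed-order boundary systems, to be the only real work; for the latter one refers to the normal-boundary-condition framework of \cite{G74} (and \cite{G71} in the scalar case), everything else being a transcription of Section~\ref{sec3.2}.
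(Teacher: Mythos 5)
Your proposal is correct and takes essentially the same route as the paper: the paper proves Corollary~\ref{Corollary3.8} simply by observing, after laying out the order-$2m$ framework (Cauchy data split, the mixed-order Dirichlet-to-Neumann system $P^\lambda_{\gamma,\chi}$, the generalized Green's formula \eqref{3.69} and the $L_2(\Sigma)^m$ boundary triplet \eqref{3.73a}), that the proofs of Section~\ref{sec3.2} generalise immediately; you carry out that transcription in detail, which is a faithful elaboration of the same idea.

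One small slip in your order bookkeeping. You assert that $P^0_{\gamma,\chi}-P^\lambda_{\gamma,\chi}$ ``drops one order (entries of order $2m-2-j-k$),'' and then identify this with boundedness from $\prod_{j<m}H^{-j-\frac12}(\Sigma)$ to $\prod_{j<m}H^{j+\frac12}(\Sigma)$. Those two statements are not the same: a system $(Q_{jk})$ bounded $\prod_{k<m}H^{-k-\frac12}\to\prod_{j<m}H^{j+\frac12}$ has $(j,k)$-entry of order $(-k-\tfrac12)-(j+\tfrac12)=-j-k-1$, which is $2m$ orders lower than the entry order $2m-1-j-k$ of $P^\lambda_{\gamma,\chi}$, not one order lower. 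This matches Remark~\ref{Remark3.2a} in the scalar case $m=1$, where the difference is of order $-1$, not $0$. Your conclusion --- that the $\lambda$-dependent correction is lower order than the principal term, so $L^\lambda$ and $L$ share the same Douglis--Nirenberg principal symbol and $M_L(\lambda)$ is an elliptic system of the opposite mixed order --- still follows, indeed a fortiori; only the quantitative claim needs correcting.
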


Whereas the statements in the case of general subspaces $V$ and $W$
will
be somewhat abstract, there are now also formulations where
the subspaces are represented by products of Sobolev spaces, and the
operators 
belong to the $\psi $dbo calculus. We shall demonstrate this 
on the basis of the treatment of normal boundary value problems 
in \cite{G71}, which we now recall. 
Denote
\[
M=\{0,\dots, 2m-1\},\quad M_0=\{0,\dots, m-1\},\quad M_1=\{m,\dots, 2m-1\}.
\]
A general normal boundary condition is given as 
\begin{equation}
\gamma _ju+\sum_{k<j}B_{jk}\gamma _ku=0,\quad j\in J,\label{3.74a}
\end{equation}
where $J$ is a subset of $M$ with $m$ elements and the $B_{jk}$ are
differential operators on $\Sigma $ of order $j-k$. (It is called
normal since the highest-order trace operators $\gamma _j$ have
coefficient 1.) Let $K=M\setminus
J$, and set
\begin{align}
J_0&=J\cap M_0,\quad J_1=J\cap M_1,\quad
K_0=K\cap M_0,\quad K_1=K\cap M_1,\nonumber\\
\gamma _{J_0}&=\{\gamma _j\}_{j\in J_0}, 
 \quad \nu _{J_1}=\{\gamma
_j\}_{j\in J_1},\quad \gamma _{K_0}=\{\gamma _j\}_{j\in K_0}, 
 \quad \nu _{K_1}=\{\gamma
_j\}_{j\in K_1},\nonumber
\end{align}
then (\ref{3.74a}) can be reduced to the form
\begin{equation}
\gamma _{J_0}u=F_0\gamma _{K_0}u,\quad \nu _{J_1}u=F_1\gamma
_{K_0}u+F_2\nu _{K_1}u,\label{3.74b} 
\end{equation}
with suitable matrices of differential operators $F_0, F_1, F_2$.
To reformulate this in terms of $\{\gamma ,\chi \}$,
 we use the convention for reflected sets:
\begin{equation}
\quad N'=\{j\mid 2m-1-j\in N\},\label{3.79}
\end{equation}
considered as an ordered subset of $M$. Then $\chi =\{\chi _j\}_{j\in
M_0}$ splits into $\chi =\{\chi _{{J_1}'},\chi _{{K_1}'}\}$, where 
\[
 \chi  _{{J_1}'}=\{\chi  _j\}_{j\in {J_1}'}, \quad  \chi  _{{K_1}'}=\{\chi  _j\}_{j\in {K_1}'}.
\]
We can now reformulate (\ref{3.74b})
as 
\begin{equation}
\gamma _{J_0}u=F_0\gamma _{K_0}u,\quad \chi  _{{J_1}'}u=G_1\gamma
_{K_0}u+G_2\chi  _{{K_1}'}u,\label{3.74d}
\end{equation}
where $G_1$ and $G_2$ are matrices of differential operators derived
from those in (\ref{3.74b}) and the coefficients in Green's formula
(more details in \cite{G71}).

When $\wA$ is the realisation of $A$ determined by the boundary
condition (\ref{3.74d}) (i.e., $D(\wA)$ consists of the $u\in D(\Ama)$
satisfying (\ref{3.74d})), and ellipticity holds, then $D(\wA)\subset
H^{2m}(\Omega )$, and the adjoint realisation
$\wA^*$ is determined by the likewise elliptic boundary condition 
\begin{equation}
\gamma _{{K_1}'}v=-G_2^*\gamma _{{J_1}'}v,\quad \chi '_{K_0}v=G_1^*\gamma _{{J_1}'}v-F_0^*\chi '_{J_0}v.\label{3.74e}
\end{equation}
Setting
\[
\Phi =\begin{pmatrix} I_{K_0K_0}\\ F_0\end{pmatrix},\quad \Psi =\begin{pmatrix} I_{{J_1}'{J_1}'}\\ -G_2^*\end{pmatrix},
\]
we have that
\[
\gamma D(\wA)\subset X=\Phi \big(\prod_{k\in K_0
}H^{-k-\frac12}(\Sigma )\big),\quad \gamma D(\wA^*)\subset Y=\Psi \big(\prod_{j\in {J_1}' } H^{-j-\frac12}(\Sigma )\big).
\]
Here $X$ is the graph of $F_0$ and naturally homeomorphic to its
``first component'' $\prod_{k\in K_
0 }H^{-k-\frac12}(\Sigma )$, and
similarly $Y$ is homeomorphic to $\prod_{j\in {J_1}' }
H^{-j-\frac12}(\Sigma )$. The operator $T:V\to W$ that $\wA$
corresponds to by Theorem 2.1 
carries over to an operator $L:X\to
Y^*$ by use of $\gamma $, and this is further reduced to an operator 
\[
L_1:\prod_{k\in K_
0 }H^{-k-\frac12}(\Sigma )\to \prod_{j\in {J_1}' }
H^{j+\frac12}(\Sigma ),\quad L_1=G_1-\Psi ^*P^0_{\gamma ,\chi }\Phi ,
\]
with domain $D(L_1)=\prod_{k\in K_
0 }H^{2m-k-\frac12}(\Sigma )$.
This representation is used in \cite{G71} to find criteria for the operator to be
m-accretive, and the ideas are further pursued in \cite{G74} for
systems of operators 
(where vector bundle notation is needed).

For the present study of resolvents, we now find that $\wA-\lambda
$ can be represented by 
\begin{equation}
L^\lambda _1=G_1-\Psi ^*P^\lambda  _{\gamma ,\chi }\Phi :\prod_{k\in K_
0 }H^{-k-\frac12}(\Sigma )\to \prod_{j\in {J_1}' }
H^{j+\frac12}(\Sigma ),\quad D(L^\lambda _1)=\prod_{k\in K_
0 }H^{2m-k-\frac12}(\Sigma ),\label{3.74f}
\end{equation}
 when $\lambda \in \varrho (A_\gamma )$.
The corresponding $M$-function and Kre\u\i{}n formula are: 
\begin{align}
M(\lambda )=-(G_1&-\Psi ^*P^\lambda _{\gamma ,\chi }\Phi )^{-1}:\prod_{j\in {J_1}' }
H^{j+\frac12}(\Sigma )\to \prod_{k\in K_
0 }H^{-k-\frac12}(\Sigma ) ,\nonumber\\
(\wA-\lambda )^{-1}&=(A_\gamma -\lambda )^{-1}-K^\lambda _\gamma  \Phi 
M(\lambda )(K^{\prime\bar\lambda}_\gamma \Psi )^*.
\label{3.74g}
\end{align}
When $\lambda \in \varrho (\wA)\cap \varrho (A_\gamma )$, $M(\lambda )$ extends
holomorphically to $\varrho (\wA)$ (note that the spectrum of $\varrho
(A_\gamma )$ is discrete in this case). As a mixed-order operator
with entries of order $2m-1-j-k$ ($k\in K_0$, $j\in {J_1}'$),
$L^\lambda _1$ is Douglis-Nirenberg elliptic, and $M(\lambda )$ is so
in the opposite direction.

The two functions $M(\lambda )$ and $L^\lambda _1$ together give a
tool to analyse the spectral properties of $\wA$ in terms of $\psi
$do's on $\Sigma $, $M(\lambda )$ being holomorphic on $\varrho (\wA)$
and $L^\lambda _1$ containing information on null-spaces and
ranges. We have hereby obtained:

\begin{theorem}
Let $A$ be a $2m$-order elliptic differential operator with
coefficients in $C^\infty (\comega )$, and let $\wA$ be the
realisation defined by the normal boundary condition {\rm
  (\ref{3.74a})}, reformulated as {\rm(\ref{3.74d})}; assume that the
boundary problem is elliptic. Let $A_\gamma $ be the Dirichlet
realisation, assumed elliptic and invertible. For $\lambda \in \varrho
(A_\gamma )$, the operator corresponding to $\wA-\lambda $ by
Corollary {\rm\ref{Corollary2.3}} carries over to {\rm (\ref{3.74f})}.
The associated $M$-function and Kre\u\i{}n resolvent formula are described in
{\rm (\ref{3.74g})}; $M(\lambda )$ extends to an
operator family holomorphic in $\lambda \in \varrho (\wA)$.
\end{theorem}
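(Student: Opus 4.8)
The plan is to transport the abstract statements of Corollary~\ref{Corollary2.3}, Corollary~\ref{Corollary2.5} and Theorem~\ref{Theorem 2.12} to the concrete boundary Sobolev spaces, exactly as was done for second-order operators in Section~\ref{sec3.2} and in the discussion preceding the theorem; the only genuinely new ingredient is the bookkeeping of mixed Douglis--Nirenberg orders. First I would recall that $\wA$ corresponds by Theorem~\ref{Theorem2.1} to an operator $T:V\to W$ with $V\subset Z$, $W\subset Z'$, that $\gamma$ carries $V$ homeomorphically onto $X=\Phi\big(\prod_{k\in K_0}H^{-k-\frac12}(\Sigma)\big)$ and $W$ onto $Y=\Psi\big(\prod_{j\in{J_1}'}H^{-j-\frac12}(\Sigma)\big)$, and that $X$ and $Y$, being the graphs of $F_0$ and $-G_2^*$, are homeomorphic to their respective first components $\prod_{k\in K_0}H^{-k-\frac12}(\Sigma)$ and $\prod_{j\in{J_1}'}H^{-j-\frac12}(\Sigma)$. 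For $\lambda\in\varrho(A_\gamma)$, Corollary~\ref{Corollary2.3} supplies $T^\lambda:V_\lambda\to W_{\bar\lambda}$ corresponding to $\wA-\lambda$, and I would note, as in \eqref{3.24a}, that $\gamma$ is $\lambda$-independent on these kernel spaces (since $\gamma$ vanishes on $D(A_\gamma)$ and each Poisson operator $K^\lambda_\gamma$ has $\gamma$ as a left inverse), so $V_\lambda$ and $W_{\bar\lambda}$ are identified with the \emph{same} spaces $X$ and $Y$; moreover $\Phi,\Psi,G_1,G_2$ and the matrices entering Green's formula \eqref{3.69} depend only on the boundary condition, not on $\lambda$.

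Next I would rewrite the defining equation $(T^\lambda\pr^\lambda_\zeta u,w)=((A-\lambda)u,w)$, $u\in D(\wA)$, $w\in W_{\bar\lambda}$, using \eqref{3.69} to turn the right-hand side into a pairing built from $\Gamma^\lambda u=\chi u-P^\lambda_{\gamma,\chi}\gamma u$, and the homeomorphisms above to turn the left-hand side into a pairing of $L_1^\lambda\gamma u$ with $\gamma w$. This reproduces verbatim the computation \eqref{3.25}--\eqref{3.27} and yields $L_1^\lambda=G_1-\Psi^*P^\lambda_{\gamma,\chi}\Phi$, i.e.\ \eqref{3.74f}. Its domain is $\lambda$-independent because ellipticity forces $D(\wA)=D(\wA-\lambda)\subset H^{2m}(\Omega)$, so $\gamma D(\wA)$ lies in $\prod_{k\in K_0}H^{2m-k-\frac12}(\Sigma)$, while conversely Douglis--Nirenberg ellipticity of $L_1^\lambda$ pins the domain down to exactly that space. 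When moreover $\lambda\in\varrho(\wA)$, Theorem~\ref{Theorem 2.12} identifies $-M_{\wA}(\lambda)$ with the inverse of $T+G^\lambda_{V,W}$, homeomorphic to $T^\lambda$; transporting this through $\gamma$, $\Phi$, $\Psi$ as in the proof of Theorem~\ref{Theorem3.6b} gives $-M(\lambda)=(L_1^\lambda)^{-1}$, the first line of \eqref{3.74g}. Substituting into the Kre\u\i{}n formula \eqref{2.37} and rewriting the composite $\ij_{V_\lambda\to H}E^\lambda_V(\,\cdot\,)(E^{\prime\bar\lambda}_W)^*\pr_{W_{\bar\lambda}}$ through the trace identifications — using that $\ij_{V_\lambda\to H}E^\lambda_V$ composed with the inverse trace map equals $K^\lambda_\gamma\Phi$ and that the corresponding adjoint combination equals $(K^{\prime\bar\lambda}_\gamma\Psi)^*$ — produces the second line of \eqref{3.74g}.

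For the holomorphic extension I would use the analogue of \eqref{3.59}: $M(\lambda)$ read off from $\bigl(I-(\wA-\lambda)^{-1}(\Ama-\lambda)\bigr)A_\gamma^{-1}$ via the trace identification of $W$ on the right and of $V$ on the left. This is defined and holomorphic on all of $\varrho(\wA)$ and agrees with $-(L_1^\lambda)^{-1}$ on $\varrho(\wA)\cap\varrho(A_\gamma)$; since $A_\gamma$ has compact resolvent its spectrum is discrete, so $\varrho(\wA)\cap\varrho(A_\gamma)$ is $\varrho(\wA)$ with a discrete set removed and the extension is unambiguous. Ellipticity of the boundary problem makes $(\wA-\lambda)^{-1}$ a $\psi $dbo mapping $L_2(\Omega)$ into $H^{2m}(\Omega)$, so the composition rules of the calculus, together with the higher-order analogue of Proposition~\ref{Proposition3.3} for the class-$0$ adjoints, show exactly as in the proof of Theorem~\ref{Theorem3.4b}(iii) that $M(\lambda)$ is a $\psi $do on $\Sigma$; as a mixed-order system its $(j,k)$-entry has order $2m-1-j-k$, its principal symbol is $\lambda$-independent (because that of $M_{\wA}(\lambda)$ in the calculus is), and equals the inverse of the Douglis--Nirenberg elliptic principal symbol of $L_1^\lambda$.

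I expect the main obstacle to be exactly the step the paper compresses into ``the proofs in Section 3.2 generalise immediately'': verifying carefully that the several boundary identifications — $\gamma$ on the kernel spaces, the graph identifications via $\Phi$ and $\Psi$, and the $\lambda$-dependent Poisson operators — fit together so that the abstract $T^\lambda$ genuinely becomes $G_1-\Psi^*P^\lambda_{\gamma,\chi}\Phi$ with a $\lambda$-independent domain, and supplying the parallel order-and-class bookkeeping (the higher-order analogue of Proposition~\ref{Proposition3.3}) needed to legitimise the adjoint manipulations inside the $\psi $dbo calculus for the mixed-order system $P^\lambda_{\gamma,\chi}$. Everything else is a transcription of Section~\ref{sec:mfn} and Theorem~\ref{Theorem3.6b} with the Sobolev indices shifted.
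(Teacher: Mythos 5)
Your proposal is correct and follows essentially the same route as the paper, whose ``proof'' is absorbed in the development preceding the theorem statement: the identification of $V,W$ with the graph spaces $X=\Phi(\cdot)$ and $Y=\Psi(\cdot)$, the $\lambda$-independence of the trace identifications, the transcription of the defining equation for $T^\lambda$ into $L_1^\lambda=G_1-\Psi^*P^\lambda_{\gamma,\chi}\Phi$, and the transport of Theorem~\ref{Theorem 2.12} and \eqref{2.37} through $\gamma$, $\Phi$, $\Psi$ to get \eqref{3.74g}. The additional care you put into the domain argument (ellipticity pinning $D(L_1^\lambda)$) and the $\psi$dbo order/class bookkeeping is a faithful unpacking of what the paper compresses into the references to Section~\ref{sec3.2} and \cite{G71}.
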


When $A$ is scalar of order $2m$, this analysis covers {\it all} the elliptic problems to which Seeley's resolvent
construction \cite{Se66} applies, for it is known that normality of the
boundary condition is necessary for the parameter-ellipticity required
there (cf.\ e.g.\ \cite[Section 1.5]{G96}). When $p\times p$-systems are considered, each line in
(\ref{3.74a})  can moreover be
composed with a multiplication map to the sections of a subbundle of $\Sigma \times {\mathbb C}^p$,
as indicated in Example \ref{Example3.7}; here normality means surjectiveness of the
coefficient of the highest normal derivative $\gamma _j$, for each $j$. For such cases, the treatment
can be based on \cite{G74}.

\begin{example}\label{Example3.9} For a simple example, consider the biharmonic
operator $A=\Delta ^2$, which has the Green's formula
\begin{equation}
(\Delta ^2u,v)-(u,\Delta ^2v)=(\chi u,\gamma v)-(\gamma u,\chi
v),\quad \chi u=\begin{pmatrix} -\gamma _1\Delta u\\ \gamma _0\Delta u\end{pmatrix};\label{3.74}
\end{equation}
note that linear conditions on $\gamma _0u$, $\gamma _1u$, $\gamma _2u$ and 
$\gamma _3u$
can be written as conditions on $\gamma _0u$, $\gamma _1u$, $\gamma
_0\Delta u$ and $\gamma _1\Delta u$, hence on $\gamma u$ and $\chi u$. The
operator $P^\lambda _{\gamma ,\chi }$ is a $\psi $do from
$H^{s-\frac12}\times H^{s-\frac32}$ to 
$H^{s-\frac72}\times H^{s-\frac52}$
(note the reverse order of the Sobolev exponents in the target
space). 

Let $\wA$
be defined by
an elliptic boundary condition with a first-order differential
operator $C$,
\begin{equation}
\gamma _0u=0,\quad \gamma _0\Delta u=C\gamma _1u;\label{3.75}
\end{equation}
then $\wA^*$ represents the likewise elliptic boundary condition
\begin{equation}
\gamma _0u=0,\quad \gamma _0\Delta u=C^*\gamma _1u,\label{3.76}
\end{equation}
and the domains of $\wA$ and $\wA^*$ are contained in $H^4(\Omega )$. For the
corresponding operator $T:V\to W$ according to Theorem 2.1,
$V=W=K^0_\gamma (\{0\}\times H^{-\frac32})$. This carries over to the
 operator
\begin{equation}
L=C-\pr_2P^0_{\gamma ,\chi }\ij_2
:H^{-\frac32}\to H^{\frac32},\label{3.77}
\end{equation}
where $\pr_2=\begin{pmatrix} 0&1\end{pmatrix}$ and $\ij_2=\begin{pmatrix}
0\\1\end{pmatrix}$. Thanks to the
ellipticity, $D(L)=H^{\frac52} $, and $L$ is an elliptic $\psi $do of
order 1.
We here find that
\begin{align}
&L^\lambda =L+\pr_2(P^0_{\gamma ,\chi }-P^\lambda _{\gamma ,\chi
})\ij_2=C-\pr_2P^\lambda _{\gamma ,\chi
}\ij_2,\quad M_L(\lambda )=-(L^\lambda )^{-1},\label{3.78}\\
&(\wA-\lambda )^{-1}=(A_\gamma -\lambda )^{-1}-K^\lambda _\gamma  \ij_2
M_L(\lambda )\pr_2(K^{\prime\bar\lambda}_\gamma )^*.\nonumber
\end{align}

\end{example}
\medskip

It is also possible to take another realisation $A_\beta $ than the Dirichlet
realisation as reference operator, preferably one defined by an
elliptic boundary condition, as in \cite{G68}. For strongly elliptic 
operators, using $A_\gamma $ as reference
operator has the advantage
that semiboundedness properties are preserved in correspondences
between $\wA$ and $T$, 
see \cite{G71,G74}.

There do exist even-order operators for which the Dirichlet problem is
not elliptic; a well-known example is the operator $-\Delta
I+2\operatorname{grad}\divg$ on subsets of ${\mathbb R}^2$, studied by Bitsadze.

\begin{example}\label{Example3.10} The operator $A$ need not be of even order. For
example, first-order $p\times p$-systems, such as Dirac operators, have received
much attention. In this case the Cauchy data are $\{\gamma
_0u_1,\dots,\gamma _0u_p\}$. Elliptic boundary conditions require 
$p$ to be even. In ``lucky'' cases, one can get an elliptic boundary value
problem by imposing the vanishing of half of the boundary values; this
will then give a reference problem, allowing the discussion of other
realisations. More systematically, one can impose the condition $\Pi
_+\gamma _0u=0$ for a certain $\psi $do projection $\Pi _+$ over the boundary
(the Atiyah-Patodi-Singer condition), which defines a Fredholm
realisation. If it is invertible, it can be used as a reference operator. 

\end{example}

Similar considerations can be worked out for elliptic operators on
suitable unbounded domains, and on manifolds. More specifically, the
 calculus of $\psi $dbo's
is extended in \cite{G96} to the manifolds called ``admissible'' there; they
have finitely many unbounded ends with good control over coordinate
diffeomorphisms. They include complements in
${\mathbb R}^n$ as well as 
$\crnp$ of smooth bounded domains.

\begin{example}\label{Example3.11a} We shall here make concrete the considerations in
Example \ref{Example3.9} for the biharmonic operator on the half-space
 $$
\rnp=\{x=\{x',x_n\}\in{\mathbb R}^n\mid x_n>0\},
 $$
where we denote $\{x_1,\dots,x_{n-1}\}=x'$. This constant-coefficient
case can be viewed as a model for variable-coefficient cases, giving
an example of the pointwise symbol calculations entering in the $\psi
$dbo theory. (A detailed introduction to the $\psi $dbo calculus is
found e.g.\ in \cite{G08}.)

It is well-known that since $\Delta ^2$ is
symmetric nonnegative, the resolvent of the Dirichlet realisation
in $L_2(\rnp)$ exists for $\lambda \in {\mathbb C}\setminus \crp$. We
shall write
$$
\lambda =-\mu ^4, \quad \mu \in V_{\pi /4},\text{ where }V_{\theta }=\{z\in {\mathbb C}\setminus\{0\}\mid  |\arg z|<\theta \}.
$$

We fix a $\lambda _0<0$, then the realisations of $A=\Delta ^2-\lambda
_0$ fit into the general
set-up, with the Dirichlet realisation $A_\gamma
$ of $\Delta ^2-\lambda _0$ as reference operator. However, for simplicity in formulas (avoiding
addition and subtraction of $\lambda _0$), we keep the
parameter $\lambda $ for the operator families defined relative to
$\Delta ^2-\lambda $.

In the following calculations, one can think of a $\mu >0$; 
the considerations extend
holomorphically to $V_{\pi /4}$. To find the Poisson
operator $K^\lambda _\gamma $ solving the problem
$$
(\Delta ^2-\lambda )u(x',x_n)=0\text{ on }\rnp,\quad u(x',0)=\varphi
_0(x'),\quad \partial_n u(x',0)=\varphi _1(x'),
$$
 we perform a Fourier transformation in the $x'$-variable,
and then have to find solutions of
\begin{align}
((|\xi '|^2-\partial_n^2)^2+\mu ^4)\acute u(\xi ',x_n)&=0\text{ for }x_n>0,\nonumber\\
\acute u(\xi ',0)&=\hat\varphi _0(\xi '),\nonumber\\\partial_n\acute u(\xi ',0)&=\hat\varphi _1(\xi '),
\label{3.81}
\end{align}
that are in ${\cal S}(\crp)=r^+{\cal S}({\mathbb R})$ (the restriction to
$\rp$ of the Schwartz space of rapidly decreasing functions on ${\mathbb
R}$).
Write
\begin{align}
((|\xi '|^2-\partial_n^2)^2+\mu ^4)&=(|\xi '|^2+i\mu
^2-\partial_n^2)(|\xi '|^2-i\mu ^2-\partial_n^2)\nonumber\\
&=(\sigma _++\partial_n)(\sigma _+-\partial_n)(\sigma
_-+\partial_n)(\sigma _--\partial_n),\nonumber\\
\sigma _+&=(|\xi '|^2+i\mu ^2)^{\frac12},\quad \sigma _-=(|\xi
'|^2-i\mu ^2)^{\frac12},
\end{align}
where $z^\frac12$ is defined for $z\in V_\pi $ to be real positive for
$z\in \rp$; note that $\mu ^2\in V_{\pi /2}$ so that $|\xi '|^2\pm i\mu
^2\in V_\pi $, and $\operatorname{Re}\sigma _\pm>0$. Then
the general solution in $\cal S(\crp)$ of the first line in (\ref{3.81}) is
$$
v(x_n)=c_1e^{-\sigma _+x_n}+c_2e^{-\sigma _-x_n}.
$$
It is adapted to the boundary conditions by solution of
$$
c_1+c_2=\hat\varphi _0(\xi '),\quad -\sigma _+c_1-\sigma _-c_2=\hat\varphi _1(\xi '),
$$
with respect to $(c_1,c_2)$; this gives
$$
\acute u(\xi ',x_n)=\tfrac1{\sigma _+-\sigma _-}\begin{pmatrix} e^{-\sigma
_+x_n}&e^{-\sigma _-x_n}\end{pmatrix}
 \begin{pmatrix} -\sigma _-&-1\\\sigma _+&1\end{pmatrix}
\begin{pmatrix} \hat\varphi _0\\ \hat\varphi _1\end{pmatrix}\equiv \tilde k^\lambda _\gamma \begin{pmatrix} \hat\varphi _0\\ \hat\varphi _1\end{pmatrix}.
$$
Here $\tilde k^\lambda _\gamma (\xi ',x_n)$ is the so-called symbol-kernel of the
Poisson operator $K^\lambda _\gamma $; it acts like
$$
K^\lambda _\gamma \begin{pmatrix} \varphi _0\\\varphi _1\end{pmatrix} =(2\pi
)^{1-n}\int _{{\mathbb R}^{n-1}}e^{ix'\cdot\xi '}\tilde k^\lambda _\gamma (\xi ',x_n)\begin{pmatrix} \hat\varphi
_0(\xi ')\\\hat\varphi _1(\xi ')\end{pmatrix}\, d\xi ',\label{3.82}
$$
and maps $H^{s-\frac12}({\mathbb R}^{n-1})\times H^{s-\frac32}({\mathbb
R}^{n-1})$ to $\{u\in H^{s}({\mathbb R}^{n}_+)\mid (A-\lambda )u=0\}$ for
all $s\in{\mathbb R}$. (In variable-coefficient cases, $\tilde k$ would
moreover depend on $x'$.) 

The symbol $p^\lambda (\xi ')$ of the Dirichlet-to-Neumann operator $P^\lambda _{\gamma
,\chi }$ with 
$\chi $ chosen as in (\ref{3.74}) is found by the calculation 
\begin{align}
p^\lambda (\xi ' )&=\gamma _0\begin{pmatrix} \partial_n(|\xi '|^2-\partial_n^2)\\
-|\xi '|^2+\partial_n^2 \\
\end{pmatrix}\tilde k^\lambda _\gamma (\xi ',x_n)=\frac1{\sigma _+-\sigma _-}\begin{pmatrix} -i\sigma _+\mu ^2 &  i\sigma _-\mu ^2\\i\mu ^2 & -i\mu ^2
\end{pmatrix} \begin{pmatrix} -\sigma _-&-1\\\sigma _+&1\end{pmatrix}
\nonumber\\
&=\frac{i\mu ^2}{\sigma _+-\sigma _-}\begin{pmatrix}{2\sigma _+\sigma _-}
&\sigma _++\sigma _- \\-(\sigma _++\sigma _- )&-2
\end{pmatrix}\nonumber\\
&=\tfrac14(\sigma _++\sigma _-)\begin{pmatrix}{2\sigma _+\sigma _-}
&\sigma _++\sigma _-\\-(\sigma _++\sigma _-) &-2 \end{pmatrix}
=\begin{pmatrix} p^\lambda _{00}&p^\lambda _{01}\\p^\lambda
  _{10}&p^\lambda _{11}\end{pmatrix}; \label{3.83} 
\end{align}  
here we used that 
$(\sigma _+-\sigma _-)^{-1}=(\sigma _++\sigma _-)/(\sigma
_+^2-\sigma _-^2)= (\sigma _++\sigma _-)/(4i\mu ^2)$.

The operator $P^\lambda _{\gamma ,\chi }$ is $\Op(p^\lambda )$, where 
we use the notation for $\psi $do's $$
\Op(q)f=(2\pi )^{1-n}\int_{{\mathbb R}^{n-1}}e^{ix'\cdot\xi '}q(x',\xi
')\hat f(\xi ')\,d\xi '.
$$

Let us consider a model boundary condition (\ref{3.75}) for $\Delta ^2-\lambda _0$ 
with a first-order
differential operator $C=b_1\partial_1+\dots+b_{n-1}\partial_{n-1}$
with constant coefficients; it defines the realisation $\wA$, and $C$ has
symbol $c(\xi ')=ib\cdot \xi '$.

 Then the operators $L^{\lambda _0}$ and $L^\lambda $ defined similarly to
(\ref{3.77}) and (\ref{3.78}) are the
$\psi $do's  with symbol
\begin{align*}
l^{\lambda _0}(\xi ')&=c(\xi ')-p^{\lambda_0}_{11}(\xi '),\text{
  resp.\ }\\
l^\lambda ( \xi ' )&=l^{\lambda_0}(\xi ')+p^{\lambda_0}_{11}(\xi ')-p^\lambda
_{11}(\xi ' )=c(\xi ')-p^{\lambda }_{11}(\xi ')=c(\xi
')+\tfrac12(\sigma _++\sigma _-). 
\end{align*}
When $l^\lambda $ is invertible for all $\xi '$,  $M_L(\lambda )$ is the $\psi $do with symbol 
\begin{equation}
m(\xi ',\lambda )=-\big(c(\xi ')+\tfrac12(\sigma _++\sigma _-)\big)^{-1}=-\big(c(\xi ')+\tfrac12(|\xi '|^2+i\mu ^2)^{\frac12}+\tfrac12(|\xi
'|^2-i\mu ^2)^{\frac12}\big)^{-1}.\label{3.84}
\end{equation}

This shows how $M_L(\lambda )$ is found. As an additional observation,
we remark that when $b$ is real nonzero, then $C^*=-C\ne 0$, so $\wA$ is non-selfadjoint,
cf.\ (\ref{3.76}). However, $M_L(\lambda )$ is well-defined for all $\lambda \in
{\mathbb C}\setminus \crp$ (since $c(\xi ')\in i{\mathbb R}$). On the other
hand, $L^{\lambda _0}$ has numerical range $\nu (L^{\lambda _0})$ approximately
equal to the sector $V=\{z\in{\mathbb C}\mid |\operatorname{Im}z|\le
|b|\operatorname{Re}z\}$, and $\nu (L^{\lambda _0})$ is contained in the numerical range
of $\wA$, by \cite[Th. III 4.3]{G68}. This gives an example where $\wA$ has a large
numerical range outside the spectrum.
\end{example}

 \begin{remark}\label{Remark3.11}  One can ask whether the
   considerations extend to Douglis-Nirenberg elliptic systems
   (systems  of
   mixed order). But it may not be easy. Consider for example a $2\times 2$-system
\begin{equation}
Aw=\begin{pmatrix} A_{11}& A_{12}\\A_{21}&A_{22}\end{pmatrix}\begin{pmatrix} u\\v\end{pmatrix}
\label{3.80}
\end{equation}
on a smooth bounded open set $\Omega \subset {\mathbb R}^n$,
where $u$ and $v$ are scalar, and $A_{ij}$ is of order $4-i-j$.
There is a Green's formula for $A$:
\[
(Aw,w')-(w,A^*w')=(\kappa w,\gamma _0u')-(\gamma _0u,\kappa 'w'),
\]
 where $\kappa w= \chi u +b_1\gamma _0v$, $ \kappa
'w'=
\chi 'u'+b_2\gamma _0 v'$,
with first-order trace operators $\chi $ and $\chi '$; $\{\gamma
_0u,\kappa w\}$ are the {\it reduced Cauchy data} according to
\cite{G77, GG77}.
Assume that the principal symbol is uniformly positive definite; in
particular, the function $A_{22}(x)\ge c>0$, the symbol $a^0_{11}(x,\xi )\ge c|\xi
|^2$ and the determinant $a^0_{11}(x,\xi )A_{22}(x)-a^0_{12}(x,\xi
)a^0_{21}(x,\xi )\ge c|\xi |^2$.
Then the Dirichlet problem for $A$ (with boundary condition $\gamma _0u=0$) 
is well-posed, with domain
$
D(A_\gamma )=(H^2(\Omega )\cap H^1_0(\Omega ))\times H^1(\Omega )$.
 
When $0\in\varrho (A_\gamma )$, there is the following parametrization of null-spaces of $A$ (cf.\ also
\cite[Th. 2.11]{G77}):
\[
\gamma _0\pr_1 : Z^{s,s-1}_0(A)=\{w\in H^{s}(\Omega )\times
H^{s-1}(\Omega )|Aw=0\}\simto H^{s-\frac12}(\Sigma ),
\]
all $s\in{\mathbb R}$; but this does not in general lead to a nice parametrization of 
\[
Z^0_0(A)=\{w\in L_2(\Omega )^2|Aw=0\},
\]
which would be required to get a good interpretation of the abstract
theory for $A$ as an operator in $L_2(\Omega )^2$. 

This regularity problem is not present in the one-dimensional
situation, where the maximal domain is $H^2\times H^1$; an example is
considered in Section 4.2.

In the example in Section 4.1  we  consider an $n$-dimensional case of
(\ref{3.80}), where the off-diagonal terms are of order 0; this
allows an easier parametrization of the null-space.
\end{remark}

\section{Examples}\label{sec4}

We have seen that the family $M_{\wA}(\lambda )$ is holomorphic on
$\varrho (\wA)$ so that 
\begin{equation}
\sigma (\wA)\supset \{\lambda \in{\mathbb C}\mid M_{\wA}(\lambda )\text{
singular at }\lambda \}.\label{4.1}
\end{equation}
Among the singular points, we have very good control of those outside
$\varrho (A_\beta )$ by Corollary \ref{Corollary2.11} and \eqref{2.38}, for the null-space and range of
$\wA-\lambda $ are fully clarified by the same concepts for $T^\lambda
$
(a holomorphic operator family on $\varrho (A_\beta )$ which 
is homeomorphic to the inverse of
$M_{\wA}(\lambda )$ on $\varrho (\wA)\cap \varrho (A_\beta )$); it
gives the information:
\begin{equation}
\sigma (\wA)\cap \varrho (A_\beta )=\{\lambda \in \varrho (A_\beta
)\mid \operatorname{ker} T^\lambda \ne \{0\}\text{ or
}\operatorname{ran}T^\lambda \ne W_{\bar\lambda } \}.\label{4.2}
\end{equation}
 So the only spectral points of $\wA$ whose spectral nature may not 
be controlled by $M_{\wA}(\lambda )$ and  $T^\lambda $ are 
those that lie in $\sigma
(A_\beta )$. For many scalar equations it has long been known that the $M$-function allows full control of the spectrum. However, when considering systems, it is easy to see that uncontrolled  points may exist by considering equations which are decoupled. In \cite{BMNW08}, an example involving a coupled system of ODEs was
given where $M_{\wA}$ was regular at a point $\lambda _0$ belonging to
the essential spectrum of $\wA$ (and of $A_\beta $).
We shall here show a similar phenomenon for PDEs and for a system of ODEs with first order off-diagonal entries.

\subsection{PDE counterexample}\label{sec4.1}

Consider the $2\times 2$ matrix-formed operator
\begin{equation}
A=\begin{pmatrix} A_0&a(x)\\b(x)&c(x)\end{pmatrix},\label{4.3}
\end{equation}
acting on 2-vector functions $w=\{u,v\}$ on $\Omega $, such that $A_0$ is a second-order
elliptic operator as studied in Section \ref{sec3.2}  and $a,b,c\in C^\infty
(\comega)$. The set $\operatorname{ran}c$ is a compact
subset of ${\mathbb C}$. We assume that it has a connected component
$K$ with more than one point, that ${\mathbb C}\setminus
(\operatorname{ran}c)$ is connected, and that 
\begin{equation}
a(x)b(x)\text{ vanishes on }\supp c .\label{4.3a}
\end{equation}
The maximal and minimal operators are 
\begin{equation}
A_{\max}=\begin{pmatrix} A_{0,\max}&a(x)\\b(x)&c(x)\end{pmatrix},\quad A_{\min}=\begin{pmatrix} A_{0,\min}&a(x)\\b(x)&c(x)\end{pmatrix},\label{4.4}
\end{equation}
with 
\begin{equation}
D(\Ama)=D(A_{0,\max})\times L_2(\Omega ),\quad D(\Ami)=D(A_{0,\min})\times L_2(\Omega ),\label{4.5}
\end{equation}
and there is the Green's formula
$$
\left(A\begin{pmatrix} u\\v\end{pmatrix}, \begin{pmatrix} u'\\v'\end{pmatrix}\right)-\left(\begin{pmatrix} u\\v\end{pmatrix},A' \begin{pmatrix} u'\\v'\end{pmatrix}\right)=(\nu _1u,\gamma _0u')-
(\gamma _0u, \nu _1'u'+{\cal A}'_0\gamma _0u')_{L_2(\Sigma )},
$$
with notation as in (\ref{3.3})\,ff.

\begin{proposition}
Let $A_\gamma $ be the Dirichlet realisation defined by the Dirichlet
condition $\gamma _0u=0$.
 
{\rm (i)}  $A_\gamma $ is lower semibounded with domain
$(H^2(\Omega )\cap H^1_0(\Omega ))\times L_2(\Omega )$, and the
spectrum is contained in a half-space 
$\{\operatorname{Re}\lambda \ge \alpha \}$.

{\rm (ii)} The operator $A_\gamma $ has a non-empty essential spectrum, namely 
\begin{equation}
\sigma _{\operatorname{ess}}(A_\gamma )=\operatorname{ran}c.\label{4.6}
\end{equation}

{\rm (iii)} Outside the
essential spectrum, the spectrum is discrete, consisting of
eigenvalues of finite multiplicity.
\end{proposition}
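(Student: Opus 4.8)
The plan is to get (i) from a G{\aa}rding estimate for the associated sesquilinear form, and to obtain (ii) and (iii) together from a Schur complement reduction to the scalar principal block (the Dirichlet realisation $A_{0,\gamma}$ of $A_0$), combined with the Rellich embedding and the analytic Fredholm theorem.

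For (i) I would split $(A_\gamma w,w)=(A_{0,\gamma}u,u)+(av,u)+(bu,v)+(cv,v)$ for $w=(u,v)\in D(A_\gamma)$, use strong ellipticity of $A_0$ in the form $\operatorname{Re}(A_{0,\gamma}u,u)\ge c_0\|u\|_{H^1(\Omega)}^2-C_0\|u\|^2$, and bound $|(av,u)|+|(bu,v)|+|(cv,v)|\le \varepsilon\|u\|_{H^1(\Omega)}^2+C_\varepsilon\|w\|^2$; absorbing the $\varepsilon$-term gives $\operatorname{Re}(A_\gamma w,w)\ge\alpha\|w\|^2$ for some real $\alpha$, and the same estimate with the same $\alpha$ for the formal adjoint $A'$, whose principal part is again strongly elliptic with the same constant. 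For $\operatorname{Re}\lambda<\alpha$ this makes both $A_\gamma-\lambda$ and $(A_\gamma-\lambda)^*=A_\gamma^*-\bar\lambda$ injective with closed range, hence $A_\gamma-\lambda$ boundedly invertible, so $\sigma(A_\gamma)\subset\{\operatorname{Re}\lambda\ge\alpha\}$. The domain statement is elliptic regularity for $A_0$ in the first component, the second carrying no derivatives.

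The heart of (ii)--(iii) is the following reduction. For $\lambda\notin\operatorname{ran}c$ the function $c-\lambda$ is bounded away from $0$, so multiplication by $(c-\lambda)^{-1}$ is bounded on $L_2(\Omega)$ and one has
\[
A_\gamma-\lambda=\begin{pmatrix} I & a(c-\lambda)^{-1}\\ 0 & I\end{pmatrix}\begin{pmatrix} S_\lambda & 0\\ 0 & c-\lambda\end{pmatrix}\begin{pmatrix} I & 0\\ (c-\lambda)^{-1}b & I\end{pmatrix},
\]
where $S_\lambda=A_{0,\gamma}-\lambda-m_\lambda$ and $m_\lambda$ is multiplication by the bounded function $a(c-\lambda)^{-1}b$; by \eqref{4.3a} this function vanishes on $\supp c$ and equals $-\lambda^{-1}ab$ off $\supp c$, so in fact $m_\lambda$ is multiplication by $-\lambda^{-1}ab$, though only its boundedness will be used. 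The outer factors being homeomorphisms of $L_2(\Omega)^2$ resp.\ of $D(A_\gamma)$, the operator $A_\gamma-\lambda$ is invertible, Fredholm, or has locally meromorphically invertible resolvent exactly when $S_\lambda\colon H^2(\Omega)\cap H^1_0(\Omega)\to L_2(\Omega)$ is. Since $\Omega$ is bounded, $(A_{0,\gamma}+i)^{-1}$ maps into $H^2(\Omega)\cap H^1_0(\Omega)\hookrightarrow L_2(\Omega)$ compactly, so $m_\lambda$ is $A_{0,\gamma}$-compact and $S_\lambda=(A_{0,\gamma}-\lambda)-m_\lambda$ is Fredholm of index $0$; hence so is $A_\gamma-\lambda$ for every $\lambda\notin\operatorname{ran}c$, giving $\sigma_{\operatorname{ess}}(A_\gamma)\subset\operatorname{ran}c$. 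For (iii) I would further write $S_\lambda=(A_{0,\gamma}+i)(I+C_\lambda)$ with $C_\lambda=(A_{0,\gamma}+i)^{-1}(-\lambda-i-m_\lambda)$ compact and holomorphic in $\lambda\in\mathbb C\setminus\operatorname{ran}c$; this set is open and, by hypothesis, connected, and by (i) it contains a left half-plane and so meets $\varrho(A_\gamma)$, where $I+C_\lambda$ is invertible. The analytic Fredholm theorem then shows $(I+C_\lambda)^{-1}$, hence $S_\lambda^{-1}$, hence $(A_\gamma-\lambda)^{-1}$, is meromorphic on $\mathbb C\setminus\operatorname{ran}c$ with finite-rank principal parts, which is exactly (iii).

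For the remaining inclusion $\operatorname{ran}c\subset\sigma_{\operatorname{ess}}(A_\gamma)$ I would build singular Weyl sequences. Given $\lambda_0=c(x_0)\ne0$ one has $x_0\in\supp c$, so by \eqref{4.3a} either $a(x_0)=0$ or $b(x_0)=0$; in the first case take $v_k$ with $\|v_k\|=1$ supported in balls about $x_0$ of shrinking radius, so $v_k\rightharpoonup 0$, and put $w_k=(0,v_k)\in D(A_\gamma)$, for which $(A_\gamma-\lambda_0)w_k=(av_k,(c-\lambda_0)v_k)\to0$ since $a$ and $c-\lambda_0$ vanish at $x_0$. In the second case one applies the same to $A_\gamma^*$, the Dirichlet realisation of $A'=\begin{pmatrix} A_0' & \bar b\\ \bar a & \bar c\end{pmatrix}$, which again satisfies \eqref{4.3a}. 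The value $\lambda_0=0$, if it lies in $\operatorname{ran}c$, is either a limit of such points of $\operatorname{ran}c\setminus\{0\}$ (and then lies in $\sigma_{\operatorname{ess}}(A_\gamma)$ by closedness), or isolated in $\operatorname{ran}c$, in which case — when $0\notin\sigma(A_{0,\gamma})$ — a Schur reduction with respect to the $(1,1)$-block shows $A_\gamma$ is Fredholm iff $c-bA_{0,\gamma}^{-1}a$ is, and since $bA_{0,\gamma}^{-1}a$ is compact this would force $0\notin\operatorname{ran}c$, a contradiction; the case $0\in\sigma(A_{0,\gamma})$ is reached by letting $\lambda\to0$. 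I expect this bookkeeping at the exceptional values where $\operatorname{ran}c$ meets $\sigma(A_{0,\gamma})$, notably at $\lambda_0=0$, to be the one delicate point; everything else is the routine Schur/Rellich/analytic Fredholm package, and the role of \eqref{4.3a} is precisely to make the off-diagonal coupling invisible to $S_\lambda$ and to supply, at each $x_0\in\supp c$, a direction along which a Weyl sequence can be built.
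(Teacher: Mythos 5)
Your argument is correct, but it takes a genuinely different route from the paper's. The paper disposes of (ii) in one line by invoking the general Geymonat--Grubb theorem \cite{GG77}: for a Douglis--Nirenberg mixed-order elliptic system with a differential boundary condition, the essential spectrum of the realisation is \emph{exactly} the set of $\lambda$ where parameter-ellipticity of the boundary value problem fails. Writing down the mixed-order principal symbol of $A-\lambda$, which is $\operatorname{diag}(a^0(x,\xi),\,c(x)-\lambda)$, this set is $\operatorname{ran}c$, and (iii) is then the usual ``nonempty resolvent set $+$ index continuity'' argument. You replace the citation to \cite{GG77} by a hands-on Schur-complement factorisation of $A_\gamma-\lambda$, Rellich compactness, and the analytic Fredholm theorem: this is more elementary and self-contained, and it cleanly gives both $\sigma_{\operatorname{ess}}(A_\gamma)\subset\operatorname{ran}c$ and the meromorphy of the resolvent off $\operatorname{ran}c$ (so (iii) with finite algebraic multiplicity). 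It is a perfectly good alternative, and consistent in spirit with the pseudodifferential-boundary-operator point of view used elsewhere in Section~3.

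Two remarks on the reverse inclusion $\operatorname{ran}c\subset\sigma_{\operatorname{ess}}(A_\gamma)$. First, your Weyl-sequence construction genuinely relies on hypothesis \eqref{4.3a} (to secure $a(x_0)=0$ or $b(x_0)=0$), whereas the paper's proof via \cite{GG77} does not; since \eqref{4.3a} is in force in this section that is legitimate, but it is worth noticing your argument proves less than the cited theorem. Second, you can avoid the Weyl sequences (and \eqref{4.3a}) entirely by running the \emph{other} Schur reduction, exactly as you do ad hoc for $\lambda_0=0$: for $\lambda\notin\sigma(A_{0,\gamma})$, factoring out the invertible $(1,1)$-block shows $A_\gamma-\lambda$ is Fredholm iff $(c-\lambda)-b(A_{0,\gamma}-\lambda)^{-1}a$ is, and since $b(A_{0,\gamma}-\lambda)^{-1}a$ is compact this holds iff multiplication by $c-\lambda$ is Fredholm on $L_2(\Omega)$, i.e.\ iff $\lambda\notin\operatorname{ran}c$; the finitely many exceptional $\lambda\in\sigma(A_{0,\gamma})\cap\operatorname{ran}c$ are then limits of points already placed in $\sigma_{\operatorname{ess}}(A_\gamma)$ whenever they are non-isolated in $\operatorname{ran}c$. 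This is both tighter and makes the $\lambda_0=0$ bookkeeping, which you rightly flag as the delicate spot, part of the same computation rather than a separate case.

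Your proof of (i) is the standard G{\aa}rding argument that the paper compresses to ``follows from standard results for elliptic operators''; no real difference there.
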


\begin{proof}
{\rm (i)} follows from standard results for elliptic operators and the fact that
 the adjoint of $A_\gamma$ is
the Dirichlet realisation of $A'$ with similar properties. Hence, the
spectrum is contained in a half-space 
$\{\operatorname{Re}\lambda \ge \alpha \}$.

{\rm (ii)} 
follows from Geymonat-Grubb \cite{GG77}, where it was shown that the essential spectrum of the
realisation of a mixed-order
system $A$ defined by a differential boundary condition $\beta u=0$
consists
exactly of the points $\lambda \in{\mathbb C}$ where ellipticity of
$\{A-\lambda ,\beta \}$ fails.    The current operator $A$ is a mixed-order
system with orders 2 and 0 for the diagonal terms, order 1 for the
off-diagonal terms (to fit with the rules of Douglis, Nirenberg and
Volevich for mixed-order systems). The principal symbol of $A-\lambda $ is
\begin{equation}
\begin{pmatrix} a^0(x,\xi )&0\\0&c(x)-\lambda \end{pmatrix},\label{4.7}
\end{equation}
so ellipticity of the Dirichlet problem for $A-\lambda $ fails
precisely when $\lambda \in \operatorname{ran}c$.

For {\rm (iii)},  we note that the resolvent set is
non-empty, so for $\lambda \notin \operatorname{ran}c$, $A_\gamma
-\lambda $ is a Fredholm operator (by the ellipticity) with  index 0
(since the index depends continuously on $\lambda $). Then all spectral points outside
$\operatorname{ran}c$ are eigenvalues with finite dimensional
eigenspaces. Since these eigenspaces are linearly independent, there
can only be countably many, so there is at most a countable set of
eigenvalues outside $\operatorname{ran}c$. They can only accumulate at
points of $\operatorname{ran}c$. 
\end{proof}

Consider another boundary condition for $A$,
\begin{equation}
\nu _1u=C\gamma _0u,\label{4.8}
\end{equation}
with $C$ a first-order differential operator, and such that the
system $\{A_0, \nu _1-C\gamma _0\}$ is elliptic, defining the
realisation $\wA_0$. Then   
 $\{A, (\nu _1-C\gamma _0)\pr_1\}$ is likewise elliptic, and we
define $\wA$ to be the realisation of $A$ under the boundary condition
(\ref{4.8}). Again, the essential spectrum equals $\operatorname{ran}c$.

It is well-known that $\wA_0$ satisfies a 1-coerciveness inequality (hence is
lower bounded) if and only if the real part of the principal symbol of
$L=C-P^0_{\gamma _0,\nu _1}$ is $\ge c_0|\xi '|$ with $c_0>0$, and
that the
adjoint then has similar properties (cf.\ e.g.\ \cite{G71},
\cite{G74}). Assuming this, we have that the spectrum lies in a 
half-plane $ \{\operatorname{Re}\lambda \ge \alpha _1\}$, 
and the spectrum is discrete outside $\operatorname{ran}c$.

We next want to discuss $M$-functions for the comparison of the
Dirichlet realisation and the realisation defined by \eqref{4.8}. Let $k$ be a point in $\varrho (A_\gamma )$; then the general 
analysis of Section \ref{sec2} works for the
realisations of $A-k$,  with
reference operator $A_\gamma -k$. So  the
holomorphic families $T^{\lambda }$ and $M_{(\wA-k)}(\lambda )$ are
well-defined relative to this set-up. Note that $c(x)$ is now replaced by
$c_k(x)=c(x)-k$; the essential spectra of $A_\gamma -k$ and $\wA-k$ are
contained in $\operatorname{ran}c_k=\operatorname{ran}c-k$.

Let $\lambda \notin \operatorname{ran}c_k$. The solutions of the
Dirichlet problem with non-homogeneous boundary condition are the
solutions of
\begin{align}
(A_0-k-\lambda )u+av&=0,\nonumber,\\
bu+(c-k-\lambda )v&=0,\nonumber
\\ \gamma _0u&=\varphi.
 \label{4.10}
\end{align}
The second line is solved by $v=b(\lambda +k-c)^{-1}u$, which by
insertion in the first line gives
\begin{align}
 (A_0-k-\lambda +ab(\lambda +k-c)^{-1})u&=0\nonumber
\\ \gamma _0u&=\varphi .
\label{4.11}
\end{align}

When  $\lambda \in\varrho (A_\gamma -k)$, the problem 
(\ref{4.11}) has a unique solution $u=K^{k,\lambda }_\gamma \varphi \in
\operatorname{ker}(A_0-k-\lambda +ab(\lambda +k-c)^{-1})$ for each $\varphi\in H^{-\frac12}
$, and (\ref{4.10}) has the solution $\{u, b(k+\lambda -c)^{-1}u\}\in 
\operatorname{ker}(A_\gamma -k-\lambda )$. The
Dirichlet-to-Neumann operator family for $A-k$ is
\begin{equation}
P^{k,\lambda} =\nu _1\pr_1K^{k,\lambda }_\gamma ,\label{4.12}
\end{equation}
which identifies with the Dirichlet-to-Neumann operator family
$P^{k,\lambda }_{\gamma _0,\nu _1}$ for $A_0-k $.

As above, let $\wA$ be the realisation of $A$ under the boundary condition
(\ref{4.8}). It corresponds as in Section \ref{sec3.2} to
\begin{align*}
L^{\lambda }&=C-P^{k,\lambda }=L+P^{k,0}-P^{k,\lambda }:H^{-\frac12}\to
H^\frac12,\text{ where }\nonumber \\
L&=C-P^{k,0},\quad D(L)=H^\frac32.
\end{align*}

Then there is an $M$-function going in the opposite direction and satisfying 
\begin{equation}
M_L(\lambda )=-(L+P^{k,0}-P^{k,\lambda })^{-1}\label{4.13}
\end{equation}
  when $\lambda \in \varrho (\wA-k)\cap\varrho (A_\gamma -k)$.
With a notation similar to (\ref{3.10}), $M_L(\lambda )$  acts as follows:
\begin{equation}
M_L(\lambda )=P^{k,\lambda }_{\nu _1-C\gamma _0,\gamma _0}.\label{4.14}
\end{equation}

\begin{proposition}
There exists a point belonging to the essential spectrum of
$\wA-k$ (and of $A_\gamma -k$) at which $M_L(\lambda )$ is holomorphic.
\end{proposition}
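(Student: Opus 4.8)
The plan is to exploit hypothesis~\eqref{4.3a} to show that, although $\operatorname{ran}c_k$ is the common essential spectrum of $\wA-k$ and $A_\gamma-k$, the Dirichlet-to-Neumann family $P^{k,\lambda}$ — and hence $L^\lambda$ and $M_L(\lambda)$ — continues holomorphically \emph{across} $\operatorname{ran}c_k$, away from a countable exceptional set. Since a connected component $K$ of $\operatorname{ran}c$ with more than one point is uncountable, one can then pick a holomorphy point of the continuation of $M_L$ inside $K-k\subset\operatorname{ran}c_k=\sigma_{\operatorname{ess}}(\wA-k)=\sigma_{\operatorname{ess}}(A_\gamma-k)$.

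First I would simplify the scalar operator occurring in \eqref{4.11}. By \eqref{4.3a}, $a(x)b(x)=0$ for $x\in\supp c$, while $c(x)=0$ for $x\notin\supp c$, so pointwise
\[
a(x)b(x)\,\bigl(\lambda+k-c(x)\bigr)^{-1}=\tfrac1{\lambda+k}\,a(x)b(x)\qquad\text{for every }\lambda\neq-k .
\]
Hence, for $\lambda\in\varrho(A_\gamma-k)$, the operator in \eqref{4.11} equals $B_\lambda:=A_0-k-\lambda+\tfrac1{\lambda+k}\,ab$, and the right-hand side is now a holomorphic family of second-order strongly elliptic differential operators for \emph{all} $\lambda\in\C\setminus\{-k\}$, with principal symbol that of $A_0$. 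The $c$-dependence — which is precisely what generates the essential spectrum — has disappeared, so $B_\lambda$ is holomorphic past $\operatorname{ran}c_k$; and since by \eqref{4.10}--\eqref{4.12} the operator $P^{k,\lambda}=\nu_1\pr_1K^{k,\lambda}_\gamma$ is nothing but the map $\varphi\mapsto\nu_1u$ for the boundary problem $B_\lambda u=0$, $\gamma_0u=\varphi$, it depends on $\lambda$ only through $B_\lambda$.

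Next I would apply analytic Fredholm theory twice. Writing $B_{\lambda,\gamma}=A_{0,\gamma}+R(\lambda)$, where $A_{0,\gamma}$ is the fixed Dirichlet realisation of $A_0$ (compact resolvent) and $R(\lambda)$ a holomorphic family of bounded operators on $L_2(\Omega)$, and using that $B_{\lambda,\gamma}$ is invertible for $\lambda$ real and sufficiently negative — by strong ellipticity of $A_0$ — one gets that $\lambda\mapsto B_{\lambda,\gamma}^{-1}$ is a meromorphic family of bounded operators $L_2(\Omega)\to H^2(\Omega)\cap H^1_0(\Omega)$ on $\C\setminus\{-k\}$, holomorphic off a discrete set $\mathcal E$. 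Therefore $K^{k,\lambda}_\gamma$, $P^{k,\lambda}$ and $L^\lambda=C-P^{k,\lambda}=L+P^{k,0}-P^{k,\lambda}$ are holomorphic families on $\C\setminus(\mathcal E\cup\{-k\})$; moreover, since the boundary system $\{A_0,\nu_1-C\gamma_0\}$ is elliptic, $L^\lambda$ is there an elliptic $\psi$do of order $1$ on the compact manifold $\Sigma$ (its principal symbol being that of the elliptic operator $L$, independent of $\lambda$), hence Fredholm of index $0$ from $H^{3/2}$ to $H^{1/2}$. Using that $L^\lambda$ is invertible for $\operatorname{Re}\lambda$ sufficiently negative — there $\lambda\in\varrho(\wA-k)\cap\varrho(A_\gamma-k)$, so $L^\lambda$ is invertible by Theorem~\ref{Theorem3.4a} — a second application of analytic Fredholm theory gives that $(L^\lambda)^{-1}$, and hence $M_L(\lambda)=-(L^\lambda)^{-1}$, is holomorphic on $\C\setminus(\mathcal E\cup\mathcal E'\cup\{-k\})$ for a further discrete set $\mathcal E'$. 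On $\varrho(\wA-k)\cap\varrho(A_\gamma-k)$ this agrees with the $M$-function $M_L$ of \eqref{4.13} (holomorphic on all of $\varrho(\wA-k)$ by Theorem~\ref{Theorem3.4a}); and as $\varrho(\wA-k)=\C\setminus\bigl(\operatorname{ran}c_k\cup\{\text{discrete eigenvalues}\}\bigr)$ is connected — here the hypothesis that $\C\setminus\operatorname{ran}c$ is connected, together with discreteness of the eigenvalues outside $\operatorname{ran}c_k$, is used — the family just constructed is \emph{the} holomorphic continuation of $M_L$.

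Finally, $K$ is a connected metric space with more than one point, hence uncountable, so $K-k\subset\operatorname{ran}c_k$ is uncountable while $\mathcal E\cup\mathcal E'\cup\{-k\}$ is countable. Choosing $\lambda_0\in(K-k)\setminus(\mathcal E\cup\mathcal E'\cup\{-k\})$ yields a point of $\sigma_{\operatorname{ess}}(\wA-k)=\sigma_{\operatorname{ess}}(A_\gamma-k)$ at which $M_L$ is holomorphic, which is the assertion. I expect the main obstacle to be the first half: recognising, from \eqref{4.3a}, that $P^{k,\lambda}$ depends only on the scalar family $B_\lambda$, which is holomorphic in $\lambda\neq-k$ with no trace of $\operatorname{ran}c$ left in it — so that the essential spectrum, which here arises entirely from the decoupled lower-right entry $c$, is invisible to $M_L$. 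The remainder is routine analytic-Fredholm bookkeeping within the $\psi$dbo framework set up above.
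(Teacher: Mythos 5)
Your proof is correct and shares the paper's crucial insight, but the second half proceeds by a different mechanism. Both arguments hinge on the observation that, under the support hypothesis \eqref{4.3a}, $a(x)b(x)\bigl(\lambda+k-c(x)\bigr)^{-1}=\tfrac{1}{\lambda+k}\,a(x)b(x)$ for all $\lambda\neq-k$, so that the scalar boundary value problem \eqref{4.11} governing the Dirichlet-to-Neumann map has no trace of $\operatorname{ran}c$ left in it. From this point the two arguments diverge. The paper packages the simplified scalar problem as the reduced Dirichlet problem of an auxiliary matrix operator $A_1$ (with constant lower-right entry $-k$), identifies $M_L$ with the $M$-function of the corresponding realisation $\wA_1$, and then invokes the Geymonat--Grubb characterisation of essential spectra of mixed-order systems to conclude that $\sigma_{\operatorname{ess}}(\wA_1)=\{-k\}$ with discrete spectrum elsewhere; connectedness of $K$ then supplies a nearby point $\lambda_0'$ of $\operatorname{ran}c-k$ in a punctured disk of holomorphy of $M_L$. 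You instead stay with the scalar family $B_\lambda=A_0-k-\lambda+\tfrac{1}{\lambda+k}ab$, holomorphic and strongly elliptic on $\C\setminus\{-k\}$, and run analytic Fredholm theory twice (once for $B_{\lambda,\gamma}^{-1}$, once for $(L^\lambda)^{-1}$, using that $L^\lambda$ is elliptic of order $1$ with $\lambda$-independent principal symbol), obtaining holomorphy of $M_L$ off a countable set $\mathcal E\cup\mathcal E'\cup\{-k\}$; uncountability of $K$ then does the rest. Your route is more self-contained in that it avoids the auxiliary operator $A_1$ and the citation of \cite{GG77} for the final step, at the price of spelling out the analytic Fredholm bookkeeping (meromorphy of $B_{\lambda,\gamma}^{-1}$, compactness of the lower-order perturbations) that the paper buries inside the general theory for $A_1$; the paper's route in turn gives slightly more structural information (it identifies the continuation explicitly as the $M$-function of a genuine realisation $\wA_1$, which is what the phrase ``giving rise to the same $M$-function'' in the ODE version of the example also exploits). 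Both final selection steps are sound; your countability argument is marginally cleaner than the paper's punctured-disk argument, since it avoids the case distinction on whether $\lambda_0$ is an isolated eigenvalue of $\wA_1$. One small point worth flagging: to conclude that your analytic-Fredholm family really is the continuation of $M_L$, you need the identity theorem on the overlap, and you correctly note that this requires connectedness of $\varrho(\wA-k)$, which in turn uses the standing hypothesis that $\C\setminus\operatorname{ran}c$ is connected together with discreteness of eigenvalues off $\operatorname{ran}c_k$; this is exactly the hypothesis the paper also relies on (implicitly, through its identification with $\wA_1$), so no gap there.
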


\begin{proof} 
Let $\lambda _0=c(x_0)-k$, for some $x_0$
where $c_0(x)
\in K\setminus\{0\}$. Then $\lambda
_0$ belongs to $ \sigma _{\operatorname{ess}}(A_\gamma -k)$ and  to $
\sigma _{\operatorname{ess}}(\wA -k)$. We shall show that $M_L(\lambda
)$ can be extended holomorphically across $\lambda _0$ or a neighbouring
point.

In view of (\ref{4.3a}), 
\begin{equation}
a(x)b(x)(\lambda +k-c(x))^{-1}=a(x)b(x)(\lambda +k)^{-1},\text{ for all
$x\in\comega$, all }\lambda \ne -k.\label{4.15}
\end{equation} 

This implies that the problem (\ref{4.11}) takes the form
 \begin{align}
(A_0-k-\lambda +ab(\lambda +k)^{-1})u&=0\nonumber
\\ \gamma _0u&=\varphi ,
 \label{4.16}
\end{align}
for $\lambda \ne -k$, and this is obtained by reduction from the problem
\begin{align}
(A_0-k-\lambda )u+av&=0,\nonumber\\
bu-(\lambda +k)v&=0,\nonumber
\\ \gamma _0u&=\varphi,
 \label{4.17}
\end{align}
a Dirichlet problem for
\begin{equation}
A_1=\begin{pmatrix} A_0-k&a(x)\\b(x)&-k\end{pmatrix}.\label{4.18}
\end{equation}
Similarly, the problem 
\begin{equation}
(A-k-\lambda )w=0,\quad \nu _1u-C\gamma _0u=\psi ,\label{4.19}
\end{equation}
is equivalent with 
\begin{equation}
(A_1-\lambda )w=0,\quad \nu _1u-C\gamma _0u=\psi .\label{4.20}
\end{equation}
So, $M_L(\lambda )$ defined above coincides with the analogous operator for $A_1$: 
\begin{equation}
M_L(\lambda )=P^\lambda _{\nu _1-C\gamma _0,\gamma _0}(A_1).\label{4.21}
\end{equation}
It is holomorphic on $\varrho (\wA_1)$, where $\wA_1$ is the
realisation of $A_1$ defined by the boundary condition (\ref{4.8}).

This detour via $A_1$ gives information about the possible holomorphic
extensions of the  $M_L(\lambda )$-function for $\wA-k$. We infer 
from the general result of
\cite{GG77} that the Dirichlet realisation $A_{1,\gamma }$ of $A_1$,
as well as
the realisation $\wA_1$, have essential spectra equal to
$\{-k\}$. Moreover, their spectra are contained in a 
half-space, and are discrete outside the point $\{-k\}$.

So $\lambda _0=c(x_0)-k$ is either in $\varrho (\wA_1)$ or is one of the
discrete eigenvalues
of $\wA_1$, and in any case there is a
disk  $B(\lambda _0,\delta )$ around it such that $M_L(\lambda )$ is
holomorphic on $B(\lambda _0,\delta )\setminus \{\lambda _0\}$.
Since $c(x_0)-k$ is not the only point in the connected set 
$K-k$, there
will be a  point $x_0'$ such that $c(x'_0)-k\in B(\lambda _0,\delta )\setminus \{\lambda _0\}$.

We can conclude that $M_L(\lambda )$ is holomorphic at $\lambda
_0'=c(x'_0)-k$, but the point belongs to the essential spectrum of
$\wA-k$ (and of $A_\gamma -k$).
\end{proof}

\begin{remark}  \label{Remark4.1}
The hypothesis (\ref{4.3a}) can be replaced by a weaker hypothesis as
follows: Assume that $K$ is a compact connected subset of
$\operatorname{ran}c$ containing more than one point. Let $\omega ,\omega
',\omega ''$ be subsets of $\comega $ with \linebreak $\operatorname{dist}(\omega
,\comega\setminus\omega ')>0$, $\operatorname{dist}(\omega '
,\comega\setminus\omega '')>0$, such that $K\subset c(\omega )$ and
$\operatorname{dist}(K
,c(\comega\setminus\omega '))>0$. {\it
  Assume that $ab$ vanishes on $\omega ''$}. Let $\eta \in C^\infty $
with $\eta =1$ on $\comega \setminus \omega ''$ and $\eta =0$ on
$\omega '$, and set $c'=\eta c$.   
Then, for all $\lambda \notin \operatorname{ran}c'- k$, $ab(\lambda
+k-c)^{-1}=ab(\lambda +k-c')^{-1}$. The problems (\ref{4.10}) and
(\ref{4.19}) can now be replaced by problems where $c$ is replaced by
$c'$ , whose range is disjoint from $K$, so that there will be points in $K-k$ where the $M$-function is holomorphic.
\end{remark}

\subsection{ODE counterexample}\label{sec4.2}

Consider the $2\times 2$ matrix-formed operator
\begin{equation}
A\begin{pmatrix} u\\ v\end{pmatrix}(x)=\begin{pmatrix} -u''(x)&a(x)v'(x) \\b(x)u'(x)&c(x)v(x)\end{pmatrix},\label{4.3b}
\end{equation}
acting on pairs of functions $u,v$ on the interval $[0,1]$ and  $a,b,c\in C^\infty ([0,1])$. 
Its formal adjoint is given by 
\begin{equation}
A'\begin{pmatrix} u\\ v\end{pmatrix}(x)=\begin{pmatrix} -u''(x)&(\overline{b(x)}v(x))' \\(\overline{a(x)}u(x))'&\overline{c(x)}v(x)\end{pmatrix},\label{4.3bb}
\end{equation}
and there is the Green's formula
\[
\left(A\begin{pmatrix} u_1\\ u_2\end{pmatrix}, \begin{pmatrix} v_1\\ v_2\end{pmatrix}\right)-\left( \begin{pmatrix} u_1\\ u_2\end{pmatrix},A'\begin{pmatrix} v_1\\ v_2\end{pmatrix}\right)=(\Gamma _1u, \Gamma '_0v)-(\Gamma _0u ,
\Gamma '_1v)
\] where
\begin{equation}
\Gamma _1u=\begin{pmatrix} -u_1'(1)+a(1)u_2(1)\\  u_1'(0)-a(0)u_2(0) \end{pmatrix},\;\Gamma _0u=\Gamma _0'u= \begin{pmatrix} u_1(1)\\  u_1(0) \end{pmatrix} ,\;
\Gamma '_1v=\begin{pmatrix} -v_1'(1)-b(1)v_2(1)\\  v_1'(0)+b(0)v_2(0) \end{pmatrix}
\end{equation}
and $u,v\in D(\Ama)=H^2(0,1)\times H^1(0,1)$.

\begin{proposition}
We introduce the quantities
\begin{align}\label{4.25}	Q(x,\lambda)=\frac{a(x)\left(\frac{b(x)}{\lambda-c(x)}\right)'}{-1+\frac{a(x)b(x)}{\lambda-c(x)}},\; \alpha(x)=\exp\left(\int_0^xQ(s,\lambda)\ ds\right), \; \beta(x)=\frac{\alpha(x)}{-1+\frac{a(x)b(x)}{\lambda-c(x)}},
\end{align}
and let $y_1$ and $y_2$ be two linearly independent solutions of 
\begin{equation}\label{4.26}
\varphi^{\prime\prime}+\frac{1}{\alpha}\left[\frac{\alpha'^2}{4\alpha}-\frac{\alpha^{\prime\prime}}{2}-\beta\lambda\right]\varphi=0
\end{equation} 
satisfying the initial conditions $y_1(0)=1, y_1'(0)=0, y_2(0)=0, y_2'(0)=1$. Note that $y_1$ and $y_2$ depend on $\lambda$, but we suppress this in the notation. 

{\rm (i)} We have
\[\ker(\Ama-\lambda)=\left\{\begin{pmatrix} c_1y_1 +c_2y_2\\
    \frac{b}{\lambda-c}(c_1y_1' +c_2y_2')\end{pmatrix}\,\Big|\; c_1,c_2\in\C\right\}.
\]

{\rm (ii)} Consider the operator $A_1$, the restriction of $\Ama$ to $\ker \Gamma_1$, i.e.~subject to Neumann boundary conditions.
The matrix of the $M$-function has the entries
\begin{eqnarray} m_{11}=\frac{y_1(1)}{\left[\frac{a(1)b(1)}{\lambda-c(1)}-1\right]y_1'(1)}, \; m_{12}=\frac{1}{\left[1-\frac{a(0)b(0)}{\lambda-c(0)}\right]y_1'(1)},\nonumber  \\
 m_{21}=\frac{1}{\left[\frac{a(1)b(1)}{\lambda-c(1)}-1\right]y_1'(1)}, \;
m_{22}=\frac{y_2'(1)}{\left[1-\frac{a(0)b(0)}{\lambda-c(0)}\right]y_1'(1)}. \label{mfn}
\end{eqnarray}

{\rm (iii)} Assume in addition that the function $b$ vanishes identically on an open interval \mbox{$I\subseteq (0,1)$}. Then there is a second operator $\At$ with different essential spectrum to $A_1$, but giving rise to the same $M$-function.

\end{proposition}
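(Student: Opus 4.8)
The plan is to obtain $\At$ from $A$ by altering only the zero-order coefficient $c$, and only on the interval $I$ where $b$ vanishes, so that the $M$-function cannot see the change while the essential spectrum can. Concretely, fix a compact subinterval $K_0\subset I$ with nonempty interior, pick a value $z_0\in\C$ outside the compact set $\operatorname{ran}(c+ab)=(c+ab)([0,1])$, and choose $\tilde c\in C^\infty([0,1])$ with $\tilde c=c$ on $[0,1]\setminus K_0$ and $\tilde c(x_1)=z_0$ for some interior point $x_1\in K_0$. Let $\At$ be the operator \eqref{4.3b} with $c$ replaced by $\tilde c$, and let $\widetilde A_1$ be its Neumann realisation, the restriction of $\widetilde{A}_{\max}$ to $\ker\Gamma_1$; note that $\Gamma_1$ depends on $a$, not on $c$, hence is unchanged.

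The first step is to show $M_{\widetilde A_1}=M_{A_1}$. Since $b\equiv0$ on $I\supseteq K_0$ while $\tilde c=c$ on $[0,1]\setminus K_0\supseteq\supp b$, the smooth function $x\mapsto b(x)/(\lambda-c(x))$ (extended by $0$ over $I$), the function $ab/(\lambda-c)$, and all their $x$-derivatives, are unchanged by the substitution $c\rightsquigarrow\tilde c$: they vanish identically on $I$ for both $c$ and $\tilde c$, and agree off $I$. Hence the quantities $Q,\alpha,\beta$ of \eqref{4.25}, the equation \eqref{4.26}, and therefore its normalised solutions $y_1,y_2$ — in particular $y_1(1),y_1'(1),y_2'(1)$ — coincide for $\At$ and $A$, and moreover $\tilde c(0)=c(0)$, $\tilde c(1)=c(1)$. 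Consequently Part (ii), applied to $\At$, produces the same matrix \eqref{mfn}, whence $M_{\widetilde A_1}(\lambda)=M_{A_1}(\lambda)$ on the nonempty open set $\varrho(A_1)\cap\varrho(\widetilde A_1)$, and thus as holomorphic matrix-valued functions. (That $\widetilde A_1$ has nonempty resolvent set, so that its $M$-function is defined at all, follows by the same reasoning that gives $\varrho(A_1)\ne\emptyset$ for the operator in Part (ii).)

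The second step is to compare essential spectra. In the natural Douglis--Nirenberg structure of $A$ — diagonal orders $2$ and $0$, off-diagonal order $1$ — the principal symbol of $A-\lambda$ has determinant $\xi^2\bigl(c(x)+a(x)b(x)-\lambda\bigr)$, so by the Geymonat--Grubb analysis \cite{GG77} of realisations of mixed-order elliptic systems under differential boundary conditions — on a bounded interval the boundary Shapiro-Lopatinski\u\i{} condition imposes nothing extra — one gets $\sigma_{\operatorname{ess}}(A_1)=\operatorname{ran}(c+ab)$ and likewise $\sigma_{\operatorname{ess}}(\widetilde A_1)=\operatorname{ran}(\tilde c+ab)$. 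As $ab\equiv0$ on $I$, we have $(\tilde c+ab)(x_1)=\tilde c(x_1)=z_0$, so $z_0\in\sigma_{\operatorname{ess}}(\widetilde A_1)\setminus\sigma_{\operatorname{ess}}(A_1)$; the essential spectra differ although the $M$-functions agree.

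The only genuinely non-routine point is the first step: one must trace the dependence of the entries \eqref{mfn} back through \eqref{4.25}--\eqref{4.26} and verify that it is channelled entirely through $b/(\lambda-c)$ (together with the boundary values $c(0),c(1)$), which the hypothesis $b|_I\equiv0$ then makes blind to $c|_I$. The essential-spectrum step is a direct application of \cite{GG77}, the crucial feature being that here the first-order off-diagonal terms $a\,D$, $b\,D$ do enter the principal symbol — in contrast with the zero-order off-diagonal terms of the PDE example of Section \ref{sec4.1} — so that $\sigma_{\operatorname{ess}}$ genuinely detects the values of $c$ on $I$.
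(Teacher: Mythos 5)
Your proof is correct and follows essentially the same route as the paper's: the essential spectrum equals $\operatorname{ran}(ab+c)$ by the Geymonat--Grubb criterion from \cite{GG77}, while the $M$-function entries in \eqref{mfn} depend on $c$ only through $b/(\lambda-c)$ (and the endpoint values $c(0),c(1)$), so perturbing $c$ on the zero set of $b$ changes the essential spectrum without changing the $M$-function. The only difference is that you spell out an explicit choice of $\tilde c$ (forcing $\tilde c(x_1)=z_0\notin\operatorname{ran}(c+ab)$) to guarantee the essential spectra actually differ, where the paper leaves this at the level of ``in general the essential spectrum \dots will differ.''
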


\begin{remark}
Note that if $y_1'(1)=0$, then $y_1$ is a Neumann eigenfunction for (\ref{4.26}), and $\lambda$ is an eigenvalue of the operator $A_1$. Apart from this case, singularities of the $M$-function only occur when the coefficient in (\ref{4.26}) blows up, as terms of the form $\frac{a(x)b(x)}{\lambda-c(x)}-1$ also appear there.
\end{remark}

\begin{proof}
{\rm (i)} 
$(A-\lambda)u=0$ can be written as 
$$-u_1^{\prime\prime}-\lambda u_1+au_2'=0,\; bu_1'+(c-\lambda)u_2=0.$$
Solving the second equation for $u_2$ and substituting this into the first gives
\begin{equation}\label{4.24}
-u_1^{\prime\prime}+a\left(\frac{bu_1'}{\lambda-c}\right)'-\lambda u_1=0.
\end{equation}
Introducing $\alpha$ and $\beta$ as in (\ref{4.25}), the equation (\ref{4.24}) simply becomes $(\alpha u_1')'-\beta\lambda u_1=0$.
Moreover, introducing
\[ \varphi(x)=\exp{\int^x \frac{\alpha'}{2\alpha}}u_1(x),
\]
the equation can be written as
\begin{equation}
\varphi^{\prime\prime}+\frac{1}{\alpha}\left[\frac{\alpha'^2}{4\alpha}-\frac{\alpha^{\prime\prime}}{2}-\beta\lambda\right]\varphi=0
\end{equation}
and the kernel of $\Ama-\lambda$  has the form
\[\ker(\Ama-\lambda)=\left\{\begin{pmatrix} c_1y_1 +c_2y_2\\
    \frac{b}{\lambda-c}(c_1y_1' +c_2y_2')\end{pmatrix}\,\Big|\; c_1,c_2\in\C\right\}.
\]

{\rm (ii)} 
We now calculate the $M$-function for the operator subject to Neumann boundary conditions.
For any $(u_1, u_2)\in \ker(\Ama-\lambda)$ we have
\[\begin{pmatrix} m_{11} & m_{12}\\m_{21} &m_{22} \end{pmatrix}\begin{pmatrix} -u_1'(1)+a(1)u_2(1)\\ u_1'(0)-a(0)u_2(0)\end{pmatrix}=\begin{pmatrix} u_1(1)\\ u_1(0)\end{pmatrix}.
\]
A simple calculation then gives (\ref{mfn}).

%
%

{\rm (iii)} 
We first note that the
determinant of the principal symbol of $A-\lambda$ is given by $\xi^2
(ab+c-\lambda)$ which is zero if $\lambda\in\Ran(ab+c)$. Hence
ellipticity of the system fails for $\lambda\in\Ran(ab+c)$ and by
\cite{GG77},  
the essential spectrum of the operator equals $\Ran(ab+c)$. 
Under the additional assumption that the function $b$ vanishes identically on an open interval \mbox{$I\subseteq (0,1)$}, we have that $\Ran(c\vert_I)$ is contained in the essential spectrum of the operator. Let $\ct$ be a $C^\infty$-function on $[0,1]$ that coincides with $c$ on $[0,1]\setminus I$. This gives rise to another operator $\At$ with
\begin{equation}
\At\begin{pmatrix} u\\ v\end{pmatrix}(x)=\begin{pmatrix} -u''(x)&a(x)v'(x) \\b(x)u'(x)&\ct(x)v(x)\end{pmatrix}.
\end{equation}
Let $\At_1$ be the realisation of $\At$ subject to Neumann boundary conditions. Then $\Ran(\ct\vert_I)$ will lie in $\sigma_{\operatorname{ess}}(\At_1)$. Therefore, in general the essential spectrum of the operators $A_1$ and $\At_1$ will differ. However, for the calculation of the $M$-function, $c$ only appears in terms of the form $\frac{b(x)}{\lambda-c(x)}$  and by our assumptions, 
$$\frac{b(x)}{\lambda-c(x)}=\frac{b(x)}{\lambda-\ct(x)},$$ so the $M$-functions for $A_1$ and $\At_1$ coincide. Thus, we have another example where two operators with differing essential spectra give rise to the same $M$-function.
\end{proof}

%

\bibliographystyle{mn}

\providecommand{\WileyBibTextsc}{}
\let\textsc\WileyBibTextsc
\providecommand{\othercit}{}
\providecommand{\jr}[1]{#1}
\providecommand{\etal}{~et~al.}

\end{document}